\newcommand\N{{\mathbb N}}
\newcommand\Z{{\mathbb Z}}
\newcommand\R{{\mathbb R}}
\newtheorem{theorem}{Theorem}[section]
\newtheorem{corollary}[theorem]{Corollary}
\newtheorem{lemma}[theorem]{Lemma}
\newtheorem{proposition}[theorem]{Proposition}
\newtheorem{definition}[theorem]{Definition}
\newtheorem{example}[theorem]{Example}
\newtheorem{remark}[theorem]{Remark}
\newtheorem{conjecture}[theorem]{Conjecture}
\newtheorem{problem}[theorem]{Problem}
\newtheorem*{theorem1.2}{Theorem 1.2}
\newtheorem*{theorem1.3}{Theorem 1.3}
\newtheorem*{corollary1.4}{Corollary 1.4}
\newcommand{\gr}{{\rm gr}}
\newcommand{\GR}{{\cal GR}}
\begin{document}

\title{
   Genus  Ranges of 
$4$-Regular Rigid Vertex Graphs}

\author{Dorothy Buck$^a$, Egor Dolzhenko$^b$, Natasha Jonoska$^c$, Masahico Saito$^d$, Karin Valencia$^e$}
\date{\today}
\maketitle

\begin{flushleft}
\small{
\noindent $(a)$ d.buck@imperial.ac.uk, Imperial College London, South Kensington Campus, Department of Mathematics, Office: 623, London SW7 2AZ, England, Telephone number: +44 (0)20 758 48570
\\\noindent $(b)$ egor.dolzhenko@gmail.com, USF, Department of Mathematics and Statistics, Tampa, Florida 33620, U.S.A.
\\\noindent $(c)$ jonoska@math.usf.edu, USF, Department of Mathematics and Statistics, Tampa, Florida 33620, U.S.A.
\\\noindent $(d)$ saito@usf.edu, USF, Department of Mathematics and Statistics,
Tampa, Florida, 33620
U.S.A.
\\\noindent $(e)$  karin.valencia06@imperial.ac.uk,
Imperial College London, South Kensington Campus, Department of Mathematics, Office: 640, London SW7 2AZ, England, Telephone number: +44 (0)20 758 58625
}

\end{flushleft}

\begin{abstract}
We introduce a notion of {\it genus range} as a set of values of genera  over all 
 surfaces  into which a graph is embedded cellularly, and we study the genus ranges of 
 a special family of
 four-regular graphs with rigid vertices  that has been used in modeling homologous DNA recombination.
  We show that the genus 
  ranges are 
  sets of 
  consecutive integers. 
  For any 
 positive integer  $n$, 
   there are graphs with $2n $ vertices that have genus range $\{m,m+1,\ldots,m'\}$ for all 
$0\le m<m'\le n$, and there are graphs with $2n-1$ vertices with genus range $\{m,m+1,\ldots,m'\}$ for all 
$0\le m<m' <n$ or $0<m<m'\le n$.  Further, we show that for every $n$ there is $k<n$ such that $\{h\}$ is a
genus range for  graphs with $2n-1$ and $2n$ vertices for all $h\le k$. It is also shown that 
 for every $n$,  
 there is a graph with $2n$ vertices with genus range 
$\{0,1,\ldots,n\}$, 
but there is no such a graph with $2n-1$ vertices.

\end{abstract}

\section{Introduction}

Genus of a graph (also known as minimum genus) is a well known notion in topological graph theory,
 and has been studied for a variety of graphs (e.g.,
 \cite{MoharThomassenBook}).
It represents the minimum genus of a surface in which a graph can be embedded.  
In this paper we introduce the notion of {\it genus range} of a given graph as the set of all possible genera 
of surfaces in which the graph can be embedded cellularly.

Graphs with 1-valent and 4-valent rigid vertices, called assembly graphs,  have been used to model homologous DNA rearrangements,
in which 
each $4$-valent vertex represents a recombination site
\cite{Angeleska2007,Angeleska2009}. Simple assembly graphs are graphs that can be specified by 
double occurrence words, or unsigned Gauss codes, and are closely related to virtual knot diagrams \cite{Kauff}. 
Spatial 
embeddings of these graphs could be seen as a physical representations of the molecules at the moment of rearrangement, hence it is of interest to study 
certain combinatorial properties and 
embeddings of a given graph
that reflect questions from such biological processes. 
Genara of graphs seem to measure their spacial 
 complexity and are  
  related to paths in the graphs that model assembled DNA segments.
Thus in this paper we study the genus ranges of assembly graphs. Two main questions are considered: 
\begin{problem}\label{MainProblem} { \   } 

{\rm \noindent
(a) Characterize the sets of integers that appear as  genus ranges of assembly graphs with $n$ $4$-valent vertices  for each positive integer $n$.
 
 \noindent
 (b) Characterize the  assembly graphs with a given set of genus range.
}
\end{problem}
  With this paper we provide some answers to these questions. In particular, we show that 
a genus range of an assembly graph is always a set of consecutive integers (Lemma~\ref{conseclem}). 
Further, 
we show (Theorem~\ref{mainthm}) that every set $\{m, m+1, \ldots, m'\}$ for $0\le m<m'\le n$ appears as a genus range of some 
simple assembly graph with $2n$ vertices. The same family, without the set $\{0,1,\ldots,n\}$, appears as a genus range of some simple assembly graph with $2n-1$ vertices. We also observe that no simple assembly graph with $2n-1$ vertices has genus range $\{0,\ldots, n\}$ (Lemma~\ref{fullrangelem}) nor 
genus range  $\{n\}$ 
(Lemma~\ref{notnnlem}). 
We characterize the simple assembly graphs with genus range $\{0\}$.

 The paper is  organized  as follows.    In Section  \ref{PreliminaryDefs} we give the  definitions of assembly graphs, genus,  and ribbon graphs (thickened graphs) and preliminary observations.
  Boundary components of  ribbon graphs are closely related to 
  genera, so this section introduces  the basic techniques used in our results based on estimating the number of boundary components of ribbon graphs. 
  Section  \ref{PreliminaryDefs} 
  also contains results of computer calculations 
  and  
  a histogram of genus range distributions for graphs with 7 and 8 vertices. The algorithm in the computer program is 
   based on 
   the procedure described in \cite{Carter}. 
   A partial order on the sets of genus ranges
   is also introduced. 
   In Section~\ref{PropOfGenus}   some 
   properties of genus ranges
    are listed.. 
    In particular, we show that 
   no graph with $2n-1$ vertices can have genus range $\{0,\ldots,n\}$ nor $\{n\}$.
  In Section~\ref{Realization} 
  families of graphs that achieve certain 
   sets of genus ranges
   are constructed. 
   We characterize the assembly graphs with genus range $\{0\}$, and  
   give a family of graphs that 
  has genus range $\{n\}$. Further we provide a family of graphs with $2n$ vertices that have 
  genus range $\{0,1,\ldots,n\}$. 
  In Section~\ref{GenusRangeTC} we find the genus range of   
  {\it tangled cords}, a special subfamily of assembly graphs.
  This family achieves the 
  maximum genus range according to the  
  partial order for graphs with odd number of vertices. 
 The summary of results characterizing  
 genus ranges is given by 
Theorem~\ref{mainthm} in Section~\ref{Characterize}. We end the paper with some concluding remarks.

\section{Terminology and Preliminaries}\label{PreliminaryDefs}

In this section, definitions of the concepts used in this paper are recalled, notations are established, and 
their basic properties are listed. 

\subsection{Double Occurrence Words and Assembly Graphs}

A \textit{graph}
   is  a pair $(V, E)$ consisting of the set $V$ of vertices and the set $E$ of edges.
The endpoints of an edge are either a pair of vertices or a single vertex.
In the latter case, the edge is called a {\it loop}. 
The {\it degree} of a vertex $v$ is the number of edges incident to $v$ (each
loop is counted twice). 

A {\it $4$-valent rigid vertex} is a vertex of 
degree $4$
for which  a cyclic order of edges is specified.  
   For a $4$-valent rigid vertex $v$, if its incident edges appear in order
   $(e_1,e_2,e_3,e_4)$,  
   the cyclic orders equivalent to this order are 
   $(e_2,e_3,e_4,e_1)$, $(e_3,e_4,e_1,e_2)$, $(e_4,e_1,e_2,e_3)$, $(e_4,e_3,e_2,e_1)$, $(e_3,e_2,e_1,e_4)$,
   $(e_2,e_1,e_4,e_3)$,  and $(e_1,e_4,e_3,e_2)$.
 For the ordered edges   $(e_1,e_2,e_3,e_4)$,
we say that  $e_2$ and $e_4$ are  {\it neighbors} of $e_1$ and $e_3$ {\it with respect to $v$}
\cite{Angeleska2009}.

An {\it assembly graph}~\cite{Angeleska2009}
$\Gamma$ 
 is a finite connected graph where all vertices are 
 $4$-valent rigid vertices or vertices of degree $1$.
 A vertex of degree  
 $1$ is called an {\it endpoint}. 
In this paper, we focus  on assembly graphs with $4$-valent rigid vertices  only  
(without endpoints). 
Such graphs are studied in knot theory, and their spatial embeddings are also called singular knots and links
($\!\!$\cite{Dror}, for example). 

The number of $4$-valent vertices in
  $\Gamma$ is called the {\it size} of $\Gamma$ and  is denoted by $|\Gamma|$.
   An assembly graph is called {\it  trivial}
   if $|\Gamma|=0$. Two assembly graphs are {\it isomorphic} if they are isomorphic as 
   graphs and the graph isomorphism preserves the cyclic order of the edges incident to a vertex.

A path in  an assembly graph is called a {\it transverse path}, or simply  a {\it transversal}, if   consecutive edges of the path are 
never neighbors with respect to their common incident vertex. 
For an assembly graph without endpoints, a transversal is considered as the image 
of a circle in the graph, where the circle goes through every vertex ``straight''. 
An assembly graph that has an Eulerian (visiting all edges) 
 transversal is  called a {\it simple assembly graph}. 
 For a simple assembly graph, if a transversal is oriented, the graph is called {\it oriented}. 
  We note that in a simple assembly graph, if a vertex $v$ is an endpoint of
 a loop $e$, then $e$ must be a neighbor of itself.

\medskip   
\noindent 
{\bf Convention:} In the rest of the paper 
unless otherwise stated,  all graphs
are simple assembly graphs without endpoints. 
\medskip  

Let $A$ be an alphabet set. A {\it double-occurrence word}  (DOW) over $A$ is a word which contains
each symbol of $A$ exactly $0$ or $2$ times. 
DOWs  are 
  also called (unsigned) Gauss codes in knot theory (see, for example, \cite{Kauff}). 
The {\it reverse} of a word $w=a_1 \cdots a_k$ is $w^R=a_k \cdots a_1$. 
 Two DOWs 
 are called {\it equivalent}
  if one is obtained from the other by a sequence of the following 
  three operations: (1)  (bijective) renaming the symbols,  (2) a cyclic 
  permutation,   (3) taking the reverse. 
   For example,   $w=123231$ is equivalent to its reverse 
   $w^R=132321$ and
  $w'=213132$ is 
    $w^R$ after renaming $1$ with $2$ and $2$ with $1$.    Therefore, all these words are equivalent.
     If for a  DOW  
 $w$ there is an equivalent DOW $w'$ such that  
 $w'=uv$,  where $u$ and $v$  are 
 two non-empty DOWs 
then $w$ is called {\em
reducible};
otherwise, it is called {\it irreducible}.

    Double-occurrence words are related to assembly graphs as follows. 
Let $\Gamma$ be an oriented simple 
 assembly graph.  
Let  the set of $4$-valent vertices of $\Gamma$ be $V=\{v_1, \ldots, v_n\}$
 where $n=|\Gamma|$. 
 Pick and fix a base point on the graph and an orientation of a transversal. Starting from the base point, 
 travel along the transversal, and 
 write down the sequence of  vertices in 
 the order they  are encountered along  
 the 
  transversal. This  gives rise to   
 a DOW 
 over alphabet $V$. Conversely, for a given DOW
 containing $n$ letters,  
 an assembly graph is constructed 
 with $n$ vertices (labeled with the letters)  
 by tracing the labeled vertices
in  the order of their appearances in the DOW. 
It is known that
equivalence classes of DOWs  are in one-to-one correspondence with
isomorphism classes of assembly graphs \cite{Angeleska2009}. 
  In particular if $w$ is a DOW,
 we write $\Gamma_w$ for a simple assembly graph corresponding to $w$.

\subsection{Genus of  Simple Assembly Graphs}

 If a graph admits an embedding on the plane, it is called {\it planar}.
An embedding of a graph in a surface is called {\it cellular} if each component of the complement of the graph in the surface is an open disk.
 For a graph $G$, the {\it minimum orientable genus of $G$}, denoted $g_{min}(G)$, is the smallest non-negative integer $g$ such that $G$ admits an 
embedding in a closed (compact, without  boundary) orientable surface $F$ of genus $g$. 
We recall 
 the following fact 
 (Proposition 3.4.1 in \cite{MoharThomassenBook}): 
Every embedding of $G$ into a minimum genus surface is 
cellular.
 The {\it maximum orientable genus of $G$}, denoted $g_{max}(G)$,
 is the largest non-negative integer $g$ such that $G$ admits a cellular embedding in a closed  orientable surface $F$ of genus $g$. 
   In this work we are concerned only with embeddings in orientable surfaces. When an assembly graph is embedded in an orientable surface, we assume that the cyclic order of edges at each vertex
agrees with the embedding. In particular, neighboring edges at a vertex belong to the boundary of a common complementary region. 

\begin{definition}
{\rm 
The \textit{genus range} $\gr(\Gamma)$ of an assembly graph $\Gamma$ 
 is the set of values of genera over all 
 surfaces $F$ into which 
  $\Gamma$ 
is embedded cellularly. 
We denote the family of all genus ranges of assembly graphs with $n$ vertices 
by ${\cal GR}_n$.
}
\end{definition}

One way to obtain a cellular embedding of 
an assembly graph $\Gamma$
in a compact orientable surface is by constructing a surface by connecting bands (ribbons) along the graph. 
This construction is called a {\it ribbon construction}, 
and is used in estimating the number of DNA strands in DNA molecules representing graph structures\cite{Jonoska2002,Jonoska2009}. 
The construction is 
outlined below.

\begin{figure}[htb]
    \begin{center}
   \includegraphics[width=3in]{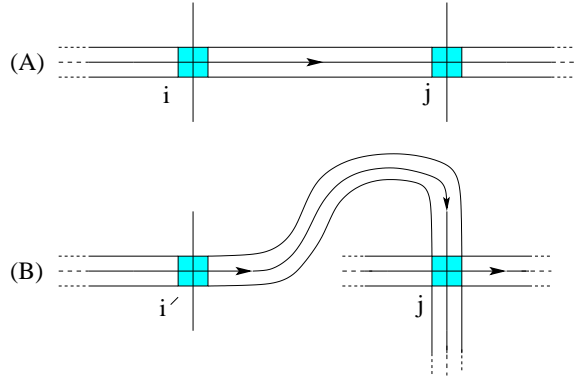}\\
    \caption{Ribbon construction }\label{vert}
    \end{center}
\end{figure}

Let $\Gamma$ be an assembly graph with $n$ vertices labeled from 
$1$ to $n$, and let $w$ be the DOW 
that represents $\Gamma$ with respect to a transversal.
 To each vertex $i$, $i=1 , \ldots, n$, 
we associate  a square with coordinate axes coincide with the edges incident to the vertex
as depicted in Figure  \ref{vert}. 
Each side of the square corresponds to an edge incident to the vertex such that neighboring sides of the square 
that   
correspond to neighboring edges. 
For an edge $e$ with the end points $i$ and $j$, we connect the sides of the squares at $i$ and $j$ corresponding to $e$ 
by a band. 
The bands are attached in such a way that the resulting surface is orientable.
In  Figure  \ref{vert}(A), the connection by a band is described when the vertex $j$ 
immediately follows $i$ in $w$, where   
$j$ is  
  the first occurrence in $w$. 
In Figure  \ref{vert}(B), one possibility of connecting  a band  to the vertex $j$ 
at its second occurrence is shown. 
By continuing the band attachment along a transversal, one obtains a compact surface 
with boundary.
The resulting surface is called a  surface {\it obtained from ribbons}, {\it obtained by the ribbon construction},
or simply a {\it ribbon graph}.
This notion has been studied in literature for general graphs (see, for example, \cite{Sergei}).  
By capping the boundaries by disks, one obtains a cellular embedding of $\Gamma$ in an orientable, 
closed (without boundary) 
surface.    
For a given cellular embedding of an assembly graph $\Gamma$, its neighborhood is regarded as a surface obtained
from ribbons as described above. 
Note that there are two choices in connecting a band to a vertex $j$
at its second occurrence, either from the top as in Figure  \ref{vert}(B), or from the 
bottom. 
Hence 
$2^n$ ribbon graphs 
can be constructed that correspond to all  
cellular embeddings of a given assembly graph. 
Therefore, the genus range of a given assembly graph can be computed by 
finding genera of all surfaces constructed from ribbons. 
In \cite{CE1,CE2}, the two possibility of connecting bands are represented by signs ($\pm$), and signed Gauss codes were
used to specify the two choices.

\subsection{Computer Calculations}

In this section we summarize our computer calculations and observations. 
The calculations help understand the structure of genus ranges and formulate conjectures.
Calculations are based on a description of boundary curves of ribbon graphs in \cite{Carter}.

\begin{remark}
{\rm
Computer calculations show that the sets of all possible genus ranges 
of $n$ letters for $n=2, \ldots, 7$ are as follows.
\begin{eqnarray*}
 & & {\cal GR}_n : \\
n=2 & &  \{ 0 \}, \{ 1 \}.  \\  
n=3 & & \{0\}, \{1\}, \{0,1\}, \{1,2\}.  \\
n=4 & & \{0\}, \{1\}, \{0,1\}, \{1,2\}, \{0,1,2\}.   \\
n=5 & &  \{0\}, \{1\}, \{2\}, \{0,1\}, \{1,2\}, \{2,3\}, \{0,1,2\}, \{1,2,3\}.  \\
n=6 & &   \{0\}, \{1\}, \{2\}, \{3\}, \{0,1\}, \{1,2\}, \{2,3\}, \{0,1,2\}, \{1,2,3\}, \{0,1,2,3\} .\\
n=7 & &   \{0\}, \{1\}, \{2\}, \{3\}, \{0,1\}, \{1,2\}, \{2,3\}, \{3,4\}, \\
 & &  \{0,1,2\}, \{1,2,3\}, \{2,3,4\}, \{0,1,2,3\}, \{1,2,3,4\}. \\
\end{eqnarray*}
For $n=8$, only  the set $\{0,1,2,3,4\}$ appears in addition to those for $n=7$. 
}
\end{remark}

\begin{remark}[Highest singleton genus ranges] \label{highest-single-rem}
{\rm 
Computer calculations show the following.
\begin{enumerate}
\setlength{\itemsep}{-3pt}
\item
Among all assembly graphs of $2$ vertices (there are only 
two,   
$1122$ and $1212$), there is a unique 
graph, $1212$, that has the genus range $\{ 1 \}$. 

\item
Among all assembly graphs of $3$ vertices (there are $5$), there is a unique 
graph, $121233$, that has the genus range $\{ 1 \}$. 

\item
There is no assembly graph with $4$ vertices  that has the genus range $\{ 2 \}$. 

\item
Among all assembly graphs of $5$ vertices, there is a unique 
graph, $1234342515$, that has the genus range $\{ 2 \}$. 

\item
Among all assembly graphs of $6$ vertices, there is a unique 
graph, $123245153646$, that has the genus range $\{ 3 \}$. 

\item
\begin{sloppypar}
Among all assembly graphs of $7$ vertices, there are two
graphs, $12345416365277$ and $12324515364677$,
that 
have  
  the genus range $\{ 3 \}$. 
The subgraphs obtained from these by deleting $77$ are equivalent (cyclic permutation and renaming) to the 
unique $6$ vertex  
 graph in the preceding case.

\end{sloppypar}

\item
There is no assembly graph with $8$ vertices  that has the genus range $\{ 4 \}$. 

\end{enumerate}
}
\end{remark}

\begin{remark}[Full genus ranges]\label{rem-full-range}
{\rm 
Computer calculations show the following.
\begin{enumerate}
\setlength{\itemsep}{-3pt}
\item
Among all assembly graphs of $4$ vertices, there is a unique 
graph, $12314324$, that has the genus range $\{ 0, 1, 2 \}$. 

\item
Among all assembly graphs of $6$ vertices, there is a unique 
graph, $123451256346$, that has the genus range $\{ 0, 1, 2, 3 \}$. 

\item
There are $13$ words out of $65346$ words of $8$ letters 
whose 
corresponding 
assembly graphs have  
 genus range $\{0,1,2,3,4\}$. 

\end{enumerate}
}
\end{remark}

\begin{remark}[Missing ranges]
{\rm
Computer calculations show the following.
\begin{enumerate}
\setlength{\itemsep}{-3pt}
\item
Among all assembly graphs of $2$ vertices (there are only 
two,  
$1122$ and $1212$), there is a unique 
graph, $1122$, that has the genus range $\{ 0 \}$, but no graph has the range $\{0,1\}$.
The situation is different from $4$ and $6$ vertices.

\item
Among all assembly graphs of $3$ vertices, there is a unique 
graph, $123123$, that has the genus range $\{ 0, 1\}$. 
There is a graph $121323$ with the range $\{1,2\}$, but  no graph has the range $\{0,1,2\}$.

\item
Among all assembly graphs of $5$ vertices, the graphs corresponding to the following 
words have the genus range $\{ 0, 1, 2\}$: 
 $1234214355$, $1231432455$,
 $1234215345$.
 Note that except the last one, they are obtained from a smaller graph by ``adding  a loop'' $55$ 
 (see Definition~\ref{loopnestdef} below for  a formal definition).
 There are graphs  with the range $\{2,3\}$ or $\{1,2, 3\}$, but  no graph has the range $\{0,1,2,3\}$.
 
\item
\begin{sloppypar}
Among all assembly graphs of $7$ vertices, graphs corresponding to the following
words have the genus range $\{ 0, 1, 2, 3\}$ and are not obtained from smaller graphs by ``addition of a loop'':  
$12345623417567$, 
$12345641237567$,
 $12345621637547$,
 $12345163267547$,
 $12345621657347$,
$12314564567327$, 
 $12314563267457$, 
 $12345645617327$.
 There is no word with the range $\{0,1,2,3,4\}$. 
 \end{sloppypar}
 
\end{enumerate}
}
\end{remark}

To put these calculations 
into 
  perspective, we define the notation
 $[a, b]$ for the set $\{ a, \ldots, b\}$ for integers $0\leq a \leq b$.
We define the {\it consecutive power set} of $\{0, 1, \ldots, n\}$ for a positive integer $n$,
denoted by ${\cal CP}(n)$, to be the set of all consecutive positive integers:
${\cal CP}(n) = \{ \, [ a, b ] \, | \, 0 \leq a \leq b \leq n \}$. 
We define the following linear order on consecutive power sets.

\begin{definition}
{\rm
For 
$[a,b] , [c,d]  \in {\cal CP}(n)$, 
we say $[a,b] <   [c,d]$
if one of the following conditions holds:
(1) $b<d$, or
(2) $b=d$ and $a<c$. 
Define the partial order $\leq$ induced from this strict partial order $<$. 

}
\end{definition}

Note that from the definition it follows that 
the  
 partial order $\leq$ on ${\cal CP}(n)$ 
 is  a linear order for any positive integer $n$.

\begin{figure}[htb]
    \begin{center}
   \includegraphics[width=2in]{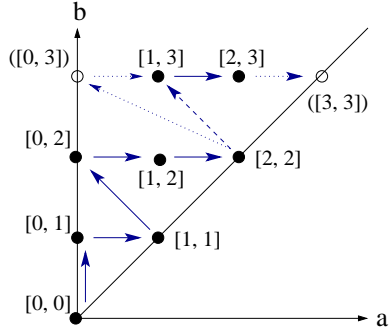}\\
    \caption{ Linear order on $\GR_5$ }\label{poset}
    \end{center}
\end{figure}

\begin{example}
{\rm
The Hasse diagrams of the  order on 
the consecutive power sets ${\cal CP}(3)$  and the restriction on $\GR_5$  are depicted in Figure  \ref{poset}.
The initial point of an arrow is the 
immediate predecessor of the terminal point. 
The sets $[0,3]=\{0,1,2,3\}$ 
and $[3,3]=\{3\}$ are not in $\GR_5$, and are enclosed in parentheses in  the figure. 
The order relation $[2,2]\leq [1,3]$ in $\GR_5$ is denoted by a dotted line,
which is immediate predecessor/successor  in $\GR_5$ but not in  ${\cal CP}( 3)$.

}
\end{example}

\begin{figure}
\begin{center}
  \includegraphics[scale=0.75]{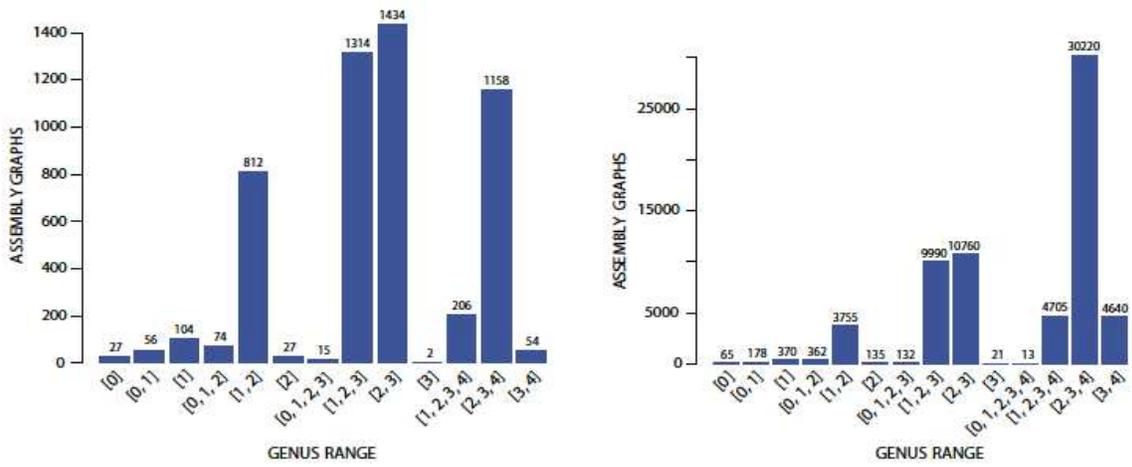} 
\end{center}
\vspace{-0.5cm}
\caption{ The number of assembly graphs of size 7 (left) and 8 (right) with a given genus range. }
\label{fig:genus_ranges_7_8}
\end{figure}

The genus ranges $\GR_7$ and $\GR_8$, as well as the number of graphs with each genus range
arranged in the linear order  are depicted in 
Figure   \ref{fig:genus_ranges_7_8}.

\subsection{Computing the Genus Range of 
an Assembly  
 Graph} 

First we recall the well-known Euler characteristic formula, establishing the relation between 
the genus and the number of boundary components. 
The Euler characteristic $\chi(F)$ of a compact orientable surface
$F$ of genus $g(F)$ and the number of boundary components $b(F)$ are
related by $\chi(F)=2-2 g(F) - b(F)$. 
As a complex, $\Gamma$ is homotopic to a $1$-complex with $n$ vertices and $2n$
edges, and $F$ is homotopic to such a $1$-complex.
Hence $\chi(F)=n - 2n=-n$.
Thus we obtain the following well known formula, which we state as a lemma, as 
we will use it often in this paper.

\begin{lemma}\label{Eulerlem}
Let $F$ be a 
 surface for an assembly graph $\Gamma$ obtained by the ribbon construction. 
Let $g(F)$ be the genus, $b(F)$ be the number of boundary components of $F$, 
and $n$ be the number of vertices of $\Gamma$.
Then we have 
$g(F) =(1/2)(n-b(F) + 2 )$. 
\end{lemma}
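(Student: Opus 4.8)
The plan is to read off the result directly from the Euler characteristic, since the two ingredients have essentially been assembled in the discussion preceding the statement. First I would pin down the combinatorics of $\Gamma$ as an abstract $1$-complex. Every vertex of $\Gamma$ is $4$-valent, so the handshake identity $\sum_v \deg(v) = 2|E|$ gives $4n = 2|E|$, hence $|E| = 2n$. Loops cause no trouble here: a loop contributes $2$ to the degree of its vertex and is counted once as an edge, so the count $|E| = 2n$ holds whether or not loops are present.

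Second, I would observe that the ribbon surface $F$ deformation retracts onto $\Gamma$: each square collapses onto its vertex and each band collapses onto the edge it thickens, and these collapses are compatible along the shared sides. Consequently $F$ and $\Gamma$ are homotopy equivalent, so $\chi(F) = \chi(\Gamma) = |V| - |E| = n - 2n = -n$, which is exactly the computation recorded just above the lemma.

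Third, I would combine this with the standard Euler characteristic formula for a compact orientable surface with boundary, $\chi(F) = 2 - 2g(F) - b(F)$, also recalled above. Substituting $\chi(F) = -n$ yields $-n = 2 - 2g(F) - b(F)$, and solving for $g(F)$ gives $g(F) = \tfrac{1}{2}(n - b(F) + 2)$, as claimed.

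The computation is entirely routine and there is no serious obstacle. The only step warranting a sentence of justification rather than a bare assertion is the homotopy equivalence $F \simeq \Gamma$, i.e.\ that thickening the graph into ribbons does not alter the Euler characteristic; once that is granted, the lemma is pure bookkeeping. Its value lies not in its proof but in the reformulation it provides: over ribbon constructions, maximizing the genus $g(F)$ is equivalent to minimizing the number of boundary components $b(F)$, and conversely, which is precisely the form in which the subsequent boundary-counting arguments will be applied.
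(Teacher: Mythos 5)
Your proof is correct and follows exactly the paper's own argument: the paper likewise computes $\chi(F)=\chi(\Gamma)=n-2n=-n$ via the homotopy equivalence of the ribbon surface with the underlying $1$-complex and substitutes into $\chi(F)=2-2g(F)-b(F)$. No gaps.
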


Thus we can compute the genus range from  
the set of  the numbers of  boundary components of each ribbon graph,
$\{ \, b(F) \, | \, F\, ${\rm is a ribbon graph of}\,\, $ G\, \}$. 
Note that $n$ and $b(F)$ have the same parity, as genera are integers.

\begin{figure}[htb]
    \begin{center}
   \includegraphics[width=3.5in]{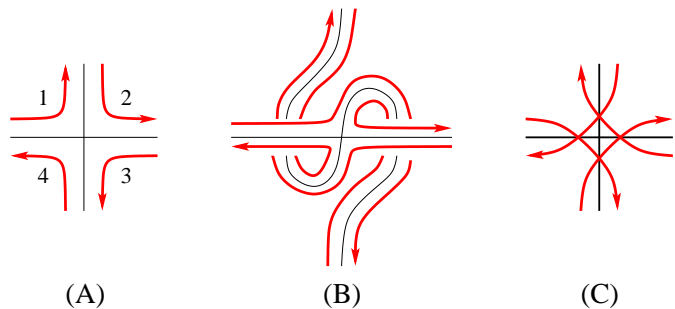}\\
    \caption{Changing the connection at a vertex }\label{flip}
    \end{center}
\end{figure}

Next we compute  the number of  boundary components of ribbon graphs 
for a given assembly graph.
In Figure  \ref{flip}(A), the boundary curves of a ribbon graph 
of an assembly graph, near a vertex, are indicated.
The arrows of these boundary curves indicate orientations of the boundary components induced from 
a chosen orientation of the ribbon graph. 
If in 
Figure  \ref{vert}(B), the  direction of entering 
the vertex has been changed from top to bottom (or vise versa), 
the ribbon graph changes.  
This change 
in 
 the ribbon graph 
 is  
 illustrated 
 in Figure~\ref{flip}(B).  
  Note that the new
ribbon graph is orientable, as indicated by the arrows on the new boundary components.
We use a schematic image in Figure  \ref{flip}(C) to indicate the changes of connections of the boundary components illustrated in Figure ~\ref{flip}(B).  
We call this operation a {\it connection change}. 
Thus starting from one ribbon graph for a given assembly graph $\Gamma$, 
one obtains its genus range by computing the number of boundary components for the surfaces obtained by switching 
connections at every vertex ($2^n$ possibilities for a graph with $|\Gamma|=n$ for a positive integer $n$).

\begin{figure}[htb]
    \begin{center}
   \includegraphics[width=3in]{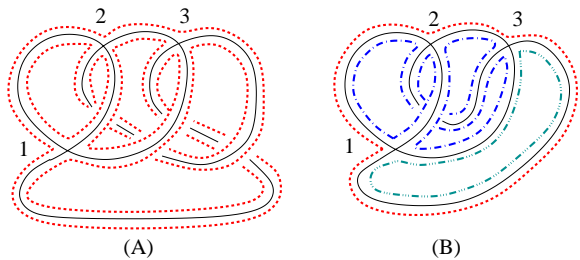}\\
    \caption{Boundary curves of  ribbon neighbourhoods the tangled cord
    with DOW $121323$. }\label{ex121323}
    \end{center}
\end{figure}

\begin{example}\label{121323ex}
{\rm 
In Figure  \ref{ex121323}, boundary curves of two ribbon graphs are shown for the graph representing 
the word $121323$. 
In   
 Figure \ref{ex121323}(A), one sees that the boundary curve is connected, and by Lemma  \ref{Eulerlem}, 
its genus is $2$. In Figure \ref{ex121323}(B), where the connection at vertex $3$ is changed, one sees that the boundary curves 
consist of $3$ components, and hence its genus is $1$. 
In fact we will find that the genus range of this graph is $\{1,2\}$ in Section~\ref{GenusRangeTC}. 
}
\end{example}

\begin{figure}[htb]
    \begin{center}
   \includegraphics[width=6in]{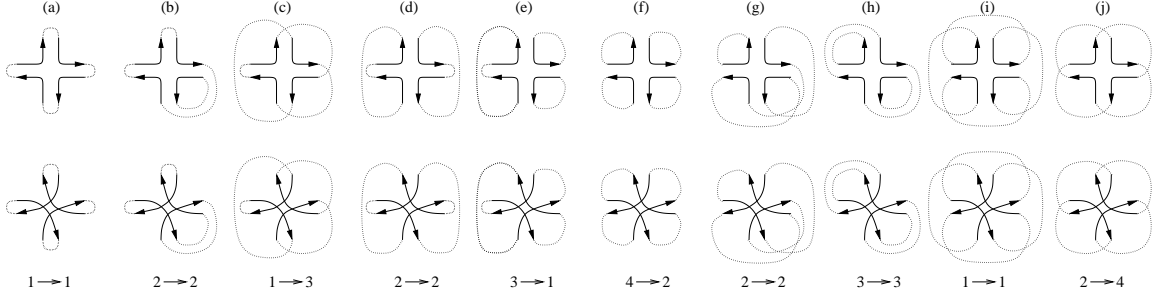}\\
    \caption{Possibilities of strand connections }\label{strand-connection}
    \end{center}
\end{figure}

In Figure  \ref{strand-connection}, all possibilities of global connections of local arcs 
for a neighborhood of a vertex corresponding to Figure  \ref{flip}(A)
are depicted in the top row,  
  and in the middle row, 
the connections after the change of connection as in Figure  \ref{flip}(C) are depicted. 
In the bottom row, the changes of the numbers of boundary components are listed.

\begin{figure}[htb]
    \begin{center}
   \includegraphics[width=4in]{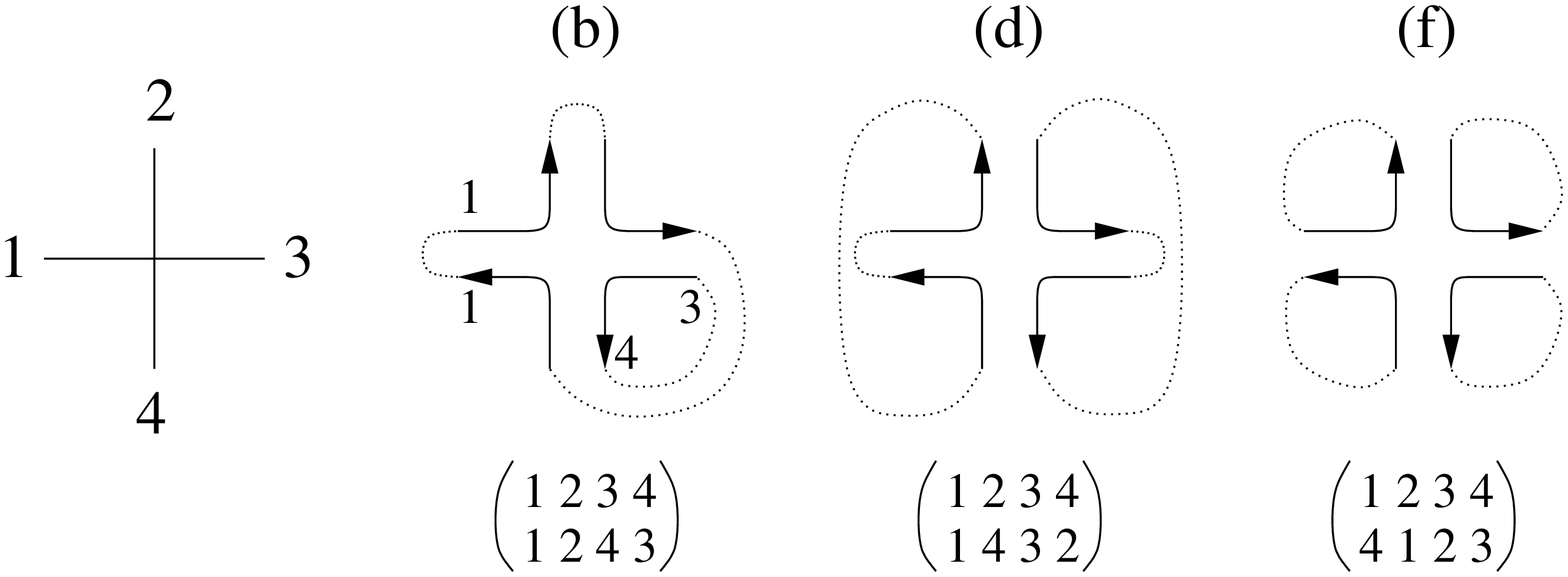}\\
    \caption{Permutations of strand connections }\label{perm-connection}
    \end{center}
\end{figure}

In Figure~\ref{perm-connection}, we illustrate  
how
  all possible connections
  are obtained.  
We label the edges at a vertex by $1, 2, 3, 4$ as in the left of the figure, following the cyclic order of the rigid vertex.
Each edge has two boundary curves, also labeled by the same number,  in opposite orientations. 
For a given ribbon graph, the boundary arcs of each edge are connected away from the neighborhood of the vertex.
The connection is depicted by dotted lines. This  connection by dotted lines  defines a permutation of four letters as follows.
In (b), for example, a dotted line connects the outgoing boundary curve of 
the edge $1$ to the incoming curve of the same edge, so the permutation sends $1$ to $1$. 
The  outgoing boundary curve of 
the edge $4$ is connected by a dotted line 
 to the incoming curve of the edge $3$, so the permutation sends $4$ to $3$. 
 Thus the connection (b) corresponds to the permutation  $(1243)$. 
 By symmetry of $90$ degree rotations, the same connection corresponds to three other permutations 
$(1324)$, $(2134)$, and $(4231)$. Up to symmetry, the connection (d) corresponds to two permutations, and (f)
corresponds to a single permutation. 
Each of the connections (b), (c), (e) and (g) in Figure~\ref{strand-connection} corresponds to four permutations,
each of (d) and (h) corresponds to two, and each of (a), (f), (i) and (j) corresponds to only one permutation. 
This exhausts all 
24   %
permutations.

{}From computer calculations we notice that the genus range always consists of consecutive integers.
Indeed, we prove the following.

\begin{lemma}\label{conseclem}
The  genus range of any assembly graph consists of consecutive integers.
\end{lemma}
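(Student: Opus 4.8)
The plan is to reduce the statement about genera to a statement about the numbers of boundary components of the $2^n$ ribbon graphs, and then to run a discrete intermediate value argument along the hypercube of connection changes.

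First I would invoke Lemma~\ref{Eulerlem}. Since $g(F) = (1/2)(n - b(F) + 2)$ is an affine, strictly decreasing function of $b(F)$, and since $n$ is the same for every ribbon graph of $\Gamma$, the genus range is a set of consecutive integers if and only if the set $\{\, b(F) \,\}$, taken over all $2^n$ ribbon graphs, consists of every integer of the correct parity between its minimum and its maximum. Recall also that $b(F) \equiv n \pmod 2$ for every $F$, because $\chi(F) = -n$ does not depend on the chosen connections; hence consecutive genera correspond precisely to $b$-values spaced by $2$.

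The key local step is to show that a single connection change (switching the connection at exactly one vertex, as in Figure~\ref{flip}) changes $b(F)$ by $0$ or $\pm 2$, equivalently changes $g(F)$ by $0$ or $\pm 1$. The \emph{evenness} of the change is forced by the parity observation above. For the bound $|\Delta b| \le 2$, I would note that a connection change is supported in a neighborhood of one vertex and only reroutes the oriented boundary strands passing through that neighborhood; in the permutation that encodes how these strands close up into boundary circles, the move amounts to composition with a permutation supported on the few strands at that vertex. A direct inspection of all local cases --- exactly the enumeration recorded in the bottom row of Figure~\ref{strand-connection} --- shows this composition is a product of at most two transpositions, so the number of boundary circles can change by at most $2$. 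Combined with parity, $\Delta b \in \{-2, 0, 2\}$.

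Finally I would argue on the hypercube whose vertices are the $2^n$ ribbon graphs and whose edges join two ribbon graphs differing by one connection change. Choose $F_{\min}$ and $F_{\max}$ realizing the minimum and maximum genus. Flipping, one vertex at a time, the connections at which $F_{\min}$ and $F_{\max}$ differ yields a path from $F_{\min}$ to $F_{\max}$ along which $g$ starts at $g_{\min}$, ends at $g_{\max}$, and changes by at most $1$ at each step; by the discrete intermediate value property it therefore attains every integer value in between, so each such value lies in $\gr(\Gamma)$. I expect the only real obstacle to be the local step: the parity argument settles the direction of the change, but the uniform bound $|\Delta b|\le 2$ rests on the locality of the move together with the case check tabulated in Figure~\ref{strand-connection}, and verifying it cleanly (rather than laboriously case-by-case) is where the care is needed.
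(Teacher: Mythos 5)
Your argument is correct and is essentially the paper's own proof: the paper likewise walks from a minimum-genus ribbon graph to a maximum-genus one by changing the connection at one vertex at a time, cites the case enumeration in Figure~\ref{strand-connection} for the bound of two on the change in the number of boundary components, and concludes via Lemma~\ref{Eulerlem} and the discrete intermediate value property. Your added remarks on parity and on the hypercube structure are just a more explicit packaging of the same steps.
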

\begin{proof}
Let $\Gamma$ be an assembly graph and 
$F$ be a ribbon graph. 
Let $\gr(\Gamma)$ be the genus range of $\Gamma$.
and  let  
$a$, $b$ be the  
minimum and 
the 
 maximum integers in $\gr(\Gamma)$,
 respectively.   
Let $F, F'$ be the corresponding ribbon graphs
with genus $a$ and $b$,  
respectively. 
Then $F'$ is obtained from $F$  
 by changing the boundary connections at some of the vertices. 
 Thus there is a sequence of ribbon graphs $F=F_0, F_1, \ldots, F_k=F'$
 such that $F_{i+1}$ is obtained from $F_i$ by changing the boundary connection at a single vertex for
 $i=0, \ldots, k-1$. 
 When a connection is changed at one vertex,  the number of boundary components 
changes by at most two as indicated in Fig.~\ref{strand-connection}, where 
the change in the number of boundary components are indicated at the bottom of the figure.
By Lemma~\ref{Eulerlem}, the difference of the genus between two consecutive ribbon graphs
$F_i$ and $F_{i+1}$ is  at most one.
Hence there exists a cellular embedding of $\Gamma$ with genus $c$ for any $c \in \Z$ such that $a < c < b$. 
Thus $\gr(\Gamma)$ consists of consecutive integers.
\end{proof}

\begin{corollary}\label{cplem}
For any $n \in \N$, we have $\GR_{2n-1}, \GR_{2n} \subset {\cal CP}(n)$. 
\end{corollary}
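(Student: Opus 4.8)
The plan is to obtain both containments as an immediate consequence of the Euler characteristic formula (Lemma~\ref{Eulerlem}) combined with the consecutiveness of genus ranges (Lemma~\ref{conseclem}). By Lemma~\ref{conseclem} any genus range is an interval $[a,b]$ with $a=g_{min}(\Gamma)$ and $b=g_{max}(\Gamma)$; since membership of $[a,b]$ in ${\cal CP}(n)$ is exactly the condition $0\le a\le b\le n$, it suffices to show that every genus $g(F)$ realized by a ribbon graph $F$ of an assembly graph $\Gamma$ with $N$ vertices satisfies $0\le g(F)\le n$, where $N\in\{2n-1,2n\}$.

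First I would dispose of the lower bound: genera are nonnegative, so $g(F)\ge 0$, whence $a\ge 0$. For the upper bound I would rewrite Lemma~\ref{Eulerlem} as $g(F)=\tfrac12\bigl(N-b(F)+2\bigr)$ and invoke the parity remark following that lemma, namely that $b(F)$ and $N$ have the same parity. When $N=2n-1$ is odd, $b(F)$ is odd and positive, so $b(F)\ge 1$ and $g(F)\le\tfrac12(2n-1-1+2)=n$. When $N=2n$ is even, $b(F)$ is even and positive, so $b(F)\ge 2$ and $g(F)\le\tfrac12(2n-2+2)=n$. In either case $g(F)\le n$, hence $b\le n$.

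Putting the two bounds together, every genus range is an interval $[a,b]$ with $0\le a\le b\le n$, that is, an element of ${\cal CP}(n)$, which yields $\GR_{2n-1},\GR_{2n}\subset{\cal CP}(n)$. There is essentially no obstacle in this argument; the only point requiring care is the parity bookkeeping in the even case, where one must note that $b(F)$ being even and positive forces $b(F)\ge 2$ rather than merely $b(F)\ge 1$ --- and this is exactly what keeps the bound for $2n$ vertices from exceeding $n$.
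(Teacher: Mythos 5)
Your proof is correct and follows essentially the same route as the paper, which simply cites Lemma~\ref{conseclem} and Lemma~\ref{Eulerlem} with a lower bound on $b(F)$. In fact your parity bookkeeping is more careful than the paper's one-line justification (which says only ``$b(F)\geq 0$'', a bound that by itself would not rule out genus $n+1$ in the $2n$-vertex case), so no changes are needed.
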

\begin{proof}
This follows from Lemma~\ref{conseclem}, and Lemma~\ref{Eulerlem} since $b(F)\geq 0$. 
\end{proof}

\section{Properties of Genus Ranges}\label{PropOfGenus}

In this section we investigate properties of genus ranges that 
will be used in later sections. 
Although the following lemma is a special case of Lemma \ref{joinlem}, 
we state it separately  since 
its simple form is convenient to use.

\begin{lemma}\label{looplem}
For any DOW $w$, the corresponding genus range 
of $\Gamma_w$ 
is equal to that of 
$\Gamma_{w'}$ where 
  $w'=waa$ and  
   $a$ is a  
    letter
that does not appear in $w$.
\end{lemma}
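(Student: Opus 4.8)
The plan is to reduce the claim to a count of boundary components and then analyse the boundary curves locally near the new vertex. First I would reformulate the passage from $w$ to $w'=waa$ geometrically: $\Gamma_{w'}$ is obtained from $\Gamma_w$ by inserting a single $4$-valent vertex $a$ on the edge of $\Gamma_w$ that closes up the transversal, and attaching a loop at $a$. Because the two occurrences of $a$ are consecutive in $w'$, the loop is a \emph{self-neighbor}: in the cyclic order at $a$ its two ends are adjacent, and the transversal passes straight through $a$ on both of its visits (entering along the subdividing edge and leaving into the loop, then returning from the loop and leaving along the other half of the subdivided edge). Thus the loop does not interleave with any other chord, and $|\Gamma_{w'}| = |\Gamma_w|+1 = n+1$.

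Since by Lemma~\ref{Eulerlem} the genus of a ribbon graph is determined by the number of vertices and the number of boundary components through $g(F)=\tfrac12(n-b(F)+2)$, and the number of vertices goes up by exactly one, a ribbon graph $F$ of $\Gamma_w$ and a ribbon graph $F'$ of $\Gamma_{w'}$ have the same genus precisely when $b(F')=b(F)+1$. So I would fix a correspondence between ribbon graphs: identify the $n$ rigid vertices of $\Gamma_w$ with the $n$ non-loop vertices of $\Gamma_{w'}$, so that a choice of connection at each of these vertices is shared. For matching choices on these $n$ vertices, let $F$ be the resulting ribbon graph of $\Gamma_w$ and $F'$ the resulting ribbon graph of $\Gamma_{w'}$, where at the loop vertex $a$ we make either of its two connection choices. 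The heart of the proof is the claim that $b(F')=b(F)+1$ for \emph{both} choices at $a$. Granting this, extending a ribbon graph of $\Gamma_w$ by any loop-connection shows $\gr(\Gamma_w)\subseteq\gr(\Gamma_{w'})$, and forgetting the loop from an arbitrary ribbon graph of $\Gamma_{w'}$ shows $\gr(\Gamma_{w'})\subseteq\gr(\Gamma_w)$; together these give the desired equality.

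To prove the claim I would argue locally, in the spirit of Figures~\ref{vert}, \ref{flip} and \ref{strand-connection}. The loop band joins two adjacent sides of the square associated to $a$, so the square together with the loop band forms an annulus; the corner of the square shared by those two adjacent sides is capped off, producing exactly one new closed boundary curve entirely contained in a neighbourhood of the loop. This local cap is created for either way of attaching the loop band (top or bottom at the second occurrence of $a$), since the loop is a self-neighbor in both cases. The two remaining free sides of the square are where the halves $p,q$ of the subdivided edge attach; the boundary curves of the rest of $F$ that previously ran along the two sides of the closing edge now run along $p$ and $q$ and through the annulus, so outside the loop neighbourhood the boundary curves of $F'$ are in bijection with those of $F$. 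Hence $b(F')=b(F)+1$.

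The step I expect to be the main obstacle is confirming that this count is $+1$ and not $-1$ for both connection choices at $a$, i.e. that threading the existing boundary curves through the annulus never merges two of them into one. Here I would use the induced orientations of the boundary curves (the arrows in Figures~\ref{flip} and \ref{strand-connection}): orientability forces the capped corner to peel off as a separate, consistently oriented circle regardless of the loop choice, so the change between the two loop options is a $0$-type connection change and equals $+1$ in each case. A parity check is a useful consistency guard: by Lemma~\ref{Eulerlem} the number of vertices and the number of boundary components always have the same parity, so $b(F')$ differs from $b(F)$ by an odd integer and the only alternatives are $b(F)\pm1$; the local cap argument then pins it to $+1$.
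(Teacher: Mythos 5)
Your proposal is correct and follows essentially the same route as the paper: the paper's proof also observes that for either of the two boundary connections at the added loop vertex $a$ the number of boundary components increases by exactly one (deferring the local picture to Figure~\ref{loops}(A),(B)), and then concludes via Lemma~\ref{Eulerlem} that the genus, and hence the genus range, is unchanged. Your write-up merely supplies in words the local annulus/cap analysis and the parity check that the paper leaves to its figure.
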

\begin{proof}
The assembly graph $\Gamma_{w'}$ corresponding to $w'$ is obtained from the graph $\Gamma_w$ for $w$ 
by adding a 
  loop with a single vertex corresponding to the letter $a$. Both of the two 
  boundary   
   connections at the added vertex 
   $a$  
   increases the number of boundary components by one as depicted in Figure  \ref{loops}(A) and (B). 
Hence the genus range remains unchanged.
\end{proof}

\begin{figure}[h]
\begin{center}
\includegraphics[width=4cm]{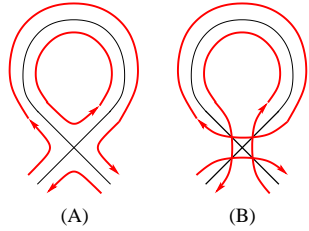} 
\caption{A loop and its connections}
\label{loops}
\end{center}
\end{figure} 

\begin{definition}\label{loopnestdef}
{\rm
A DOW $v$ is said to be obtained from $w$ by {\it loop nesting}  if there exists a sequence  
of DOWs $w=w_0, w_1, \ldots, w_n=v$ such that $w_{i+1}=w_i' a_i a_i$ where $w_i'$ is a cyclic permutation
or reverse %
of $w_i$ and $a_i$ is a single letter that does not appear in $w_i$ for $i=0, \ldots, n-1$.  

A DOW obtained from the empty word $\epsilon$ by loop nesting is called {\it loop-nested}.
An assembly graph corresponding to a loop-nested DOW is called a {\it loop-nested} graph. 
}
\end{definition}

Lemma \ref{looplem} implies that loop nesting preserves the genus ranges. Specifically,

\begin{corollary}\label{loopcor}
If a DOW $w'$ is obtained from a DOW $w$ by loop nesting, then their 
corresponding assembly graphs have the same genus range.
\end{corollary}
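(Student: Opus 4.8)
The plan is to prove the statement by induction on the number $n$ of loop-nesting steps in Definition~\ref{loopnestdef}, reducing each step either to DOW equivalence or to a single application of Lemma~\ref{looplem}. First I would dispose of the base case: when $n=0$ we have $w'=w$, so the two assembly graphs are literally the same and their genus ranges coincide trivially.

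For the inductive step I would fix a loop-nesting sequence $w=w_0, w_1, \ldots, w_n = w'$ and assume the result for sequences of length $n-1$, so that $\gr(\Gamma_w) = \gr(\Gamma_{w_{n-1}})$. By definition $w_n = w_{n-1}' a_{n-1} a_{n-1}$, where $w_{n-1}'$ is a cyclic permutation or reverse of $w_{n-1}$ and $a_{n-1}$ is a letter not appearing in $w_{n-1}$. Since cyclic permutation and reversal are among the three operations defining equivalence of DOWs, the word $w_{n-1}'$ is equivalent to $w_{n-1}$; by the one-to-one correspondence between equivalence classes of DOWs and isomorphism classes of assembly graphs recalled earlier, $\Gamma_{w_{n-1}'}$ is isomorphic to $\Gamma_{w_{n-1}}$, and isomorphic assembly graphs have the same genus range. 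Applying Lemma~\ref{looplem} to the word $w_{n-1}'$ and the fresh letter $a_{n-1}$ gives $\gr(\Gamma_{w_n}) = \gr(\Gamma_{w_{n-1}'})$. Chaining these identities yields $\gr(\Gamma_{w'}) = \gr(\Gamma_{w_{n-1}'}) = \gr(\Gamma_{w_{n-1}}) = \gr(\Gamma_w)$, closing the induction.

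The only point requiring care is justifying that the cyclic permutation or reversal performed at each stage leaves the genus range intact; this is handled entirely by the fact that such operations produce equivalent DOWs and hence isomorphic assembly graphs, so no separate topological computation with ribbon graphs is needed. I do not expect a genuine obstacle: Lemma~\ref{looplem} already supplies the one substantive ingredient, namely that appending a new loop $a_{n-1}a_{n-1}$ changes the number of boundary components by exactly one for both band connections at the new vertex and therefore preserves the genus range. The corollary is thus a routine bookkeeping consequence of that lemma together with DOW equivalence.
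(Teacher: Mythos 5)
Your proof is correct and follows exactly the route the paper intends: the paper gives no separate argument for this corollary, simply asserting that it follows from Lemma~\ref{looplem}, and your induction on the number of loop-nesting steps (using DOW equivalence for the cyclic permutation/reversal and Lemma~\ref{looplem} for the appended loop) is the straightforward formalization of that assertion.
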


\begin{corollary}\label{higherlem}
If a set $A$ appears as the genus range in ${\cal GR}_n$ for a positive integer $n$, 
then $A$ appears as a genus range in ${\cal GR}_m$
for any integer $m>n$. 
\end{corollary}
\begin{proof}
Let $w'=w a_1 a_1 a_2 a_2 \cdots a_{k} a_k $ where $k=m-n>0$, then by Corollary \ref{loopcor} 
$\Gamma_{w'}$ has the same genus range as $\Gamma_w$. 
\end{proof}

\begin{figure}[htb]
    \begin{center}
   \includegraphics[width=1.5in]{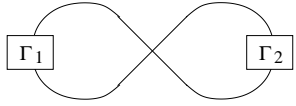}\\
    \caption{Cross sum }\label{crosssum}
    \end{center}
\end{figure}

\begin{definition}\label{joindef}
{\rm
Let $\Gamma_1$ and $\Gamma_2$ be  assembly graphs.
An assembly  graph $\Gamma$ is said to be obtained from $\Gamma_1$ and $\Gamma_2$ by 
a {\it cross sum} 
 if it is formed by 
connecting the two graphs to
the figure-eight graph as depicted in Figure  \ref{crosssum}.
}
\end{definition}

This operation in relation to the number of boundary components was discussed in \cite{JSC}
in relation to virtual knots.
The cross sum construction depends on the choice of edges of the two graphs to
connect to the figure-eight.
Note that the graph $\Gamma$ obtained from $\Gamma_1$ and $\Gamma_2$ by a cross sum  is reducible.

\begin{figure}[htb]
    \begin{center}
   \includegraphics[width=5in]{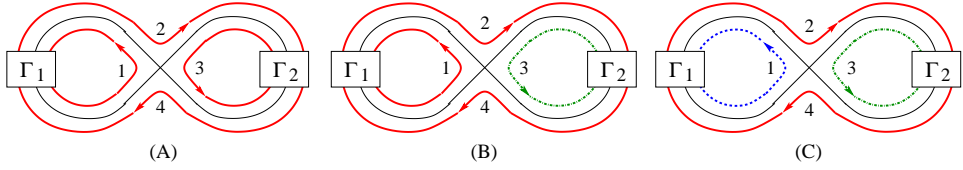}\\
    \caption{Boundary curves of cross sum }\label{crosssumbd}
    \end{center}
\end{figure}

\begin{lemma}\label{joinlem}
Let $\Gamma_1$ and $\Gamma_2$ be  assembly graphs.
If $\Gamma$ is obtained from $\Gamma_1$ and $\Gamma_2$ by a cross sum, then 
 $\gr(\Gamma)=\{ \, g_1 + g_2  \mid  g_1  \in \gr(\Gamma_1), \ g_2 \in \gr(\Gamma_2) \, \} $. 
\end{lemma}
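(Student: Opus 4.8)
The plan is to set up a bijection between the ribbon graphs of $\Gamma$ and triples consisting of a ribbon graph of $\Gamma_1$, a ribbon graph of $\Gamma_2$, and one of the two connection choices at the central (figure-eight) vertex, and then to show that under this bijection the genus is exactly additive. Write $n_1=|\Gamma_1|$, $n_2=|\Gamma_2|$, so $|\Gamma|=n_1+n_2+1$, and record the DOW of $\Gamma$ in the normalized form $w_1\, c\, w_2\, c$, where $w_i$ represents $\Gamma_i$ and $c$ is the central vertex; then the two edges of $\Gamma$ joining $c$ to the $\Gamma_1$-part are precisely the two halves of the transversal-closing edge $e_1$ of $\Gamma_1$ (the edge of $\Gamma_1$ from the last to the first letter of $w_1$), and similarly $e_2$ for $\Gamma_2$. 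This is consistent with $\Gamma$ being reducible.

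First I would make the bijection precise. A ribbon graph of $\Gamma$ is a choice of connection at each vertex; since the vertex set of $\Gamma$ is the disjoint union of the vertices of $\Gamma_1$, those of $\Gamma_2$, and $\{c\}$, and the cross sum alters the band pattern only in a neighborhood of $c$, the connections at the $\Gamma_1$-vertices determine, after reconnecting the cut edge $e_1$, a genuine ribbon graph $F_1$ of $\Gamma_1$, and every ribbon graph $F_1$ of $\Gamma_1$ arises this way; likewise for $F_2$. Together with the two choices at $c$, this gives the bijection $F \leftrightarrow (F_1, F_2, \text{choice at } c)$.

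The heart of the argument is the boundary-component count: I will show that for \emph{either} choice at $c$,
\[
b(F) = b(F_1) + b(F_2) - 1,
\]
where $b(\cdot)$ is the number of boundary components. Granting this, Lemma~\ref{Eulerlem} gives
\[
g(F) = \tfrac12\big(|\Gamma| - b(F) + 2\big) = \tfrac12\big(n_1 - b(F_1) + 2\big) + \tfrac12\big(n_2 - b(F_2) + 2\big) = g(F_1) + g(F_2),
\]
and running over all triples in the bijection immediately yields $\gr(\Gamma) = \{\, g_1+g_2 \mid g_1\in\gr(\Gamma_1),\ g_2\in\gr(\Gamma_2)\,\}$, the assertion. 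Note that this argument does not even require Lemma~\ref{conseclem}.

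To establish the displayed count I would trace the boundary curves in a neighborhood of $c$, as in Figure~\ref{crosssumbd}. The idea is that cutting $e_1$ and $e_2$ and rerouting the two transversal strands through the crossing at $c$ splices the two boundary arcs along $e_1$ (arcs of $F_1$) and the two along $e_2$ (arcs of $F_2$) together with the four corner arcs of the square at $c$; a direct trace shows the net effect is to amalgamate one boundary component of $F_1$ with one of $F_2$ into a single component while leaving every other component untouched, so the total drops by exactly one. The main obstacle, and the step deserving the most care, is verifying that this count is the \emph{same} for both connection choices at $c$: a priori a connection change alters $b$ by $0$ or $\pm 2$ (Figure~\ref{strand-connection}), so I must check by explicit case analysis of the boundary arcs at $c$ — and of whether the two sides of each $e_i$ lie on one or on two boundary components of $F_i$ — that the change at this particular vertex is always $0$, i.e.\ both choices yield $b(F_1)+b(F_2)-1$. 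A useful consistency check throughout is that $\chi(F)=\chi(F_1)+\chi(F_2)-1$ holds automatically (both sides equal $-(n_1+n_2+1)$), which fixes the relation $2g(F)+b(F) = 2\big(g(F_1)+g(F_2)\big)+b(F_1)+b(F_2)-1$ and thereby reduces the entire proposition to the single boundary count above.
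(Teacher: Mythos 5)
Your proposal is correct and follows essentially the same route as the paper: the paper likewise reduces everything to the boundary count $b(F)=b(F_1)+b(F_2)-1$, verifies it by the same case analysis on whether the two boundary arcs along each connecting edge lie on one or two components of $F_i$ (its cases (A), (B), (C), matched to connections (a), (b), (h) of Figure~\ref{strand-connection}, where a connection change at the central vertex leaves $b$ unchanged), and then applies Lemma~\ref{Eulerlem}. The only difference is presentational: you make the bijection with triples $(F_1,F_2,\text{choice at }c)$ explicit, which the paper leaves implicit.
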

\begin{proof}
For given ribbon graphs $F_1$ and $F_2$ of $\Gamma_1$ and $\Gamma_2$, respectively,
a ribbon graph $F$ of $\Gamma$ is constructed as depicted in Figure  \ref{crosssumbd}. 
The two boundary curves at the edge where $\Gamma_i$ ($i=1,2$) is  connected to the figure-eight
belong to either one component or two in each of $F_1$ and $F_2$. 
In Figure  \ref{crosssumbd}(A), the case where the two curves belong to one component for both of $\Gamma_i$,
$i=1,2$, is depicted. In (B), the case when the two arcs belong to distinct components for one graph
($\Gamma_2$ in this figure) is depicted, and in (C), the case when the two arcs belong to distinct components 
for both  of $\Gamma_i$,
$i=1,2$, is depicted.
{}From the figures we identify these cases (A), (B), (C) with the connection cases 
(a), (b) and (h), respectively, in Figure  \ref{strand-connection}. 
We see from Figure  \ref{strand-connection} that 
by changing the connections of the boundary components in 
  all these cases,   
  the number of boundary components 
  does not change.   
  Let $v_i$ and $b_i$ be the numbers of vertices and boundary components of $F_i$, $i=1,2$,
respectively, and similarly $v$, $b$ for $F$.
Then 
for all cases (A), (B) and (C)  
we have  
$v=v_1+v_2+1$ and $b=b_1+b_2 -1$ 
and  
 from  Lemma \ref{Eulerlem} 
we 
compute
$$
g(F) =   \frac{1}{2} ( v- b + 2 )  
= \frac{1}{2} ( (v_1+ v_2 + 1) - (b_1 + b_2 - 1) + 2     ) 
  =g(F_1) + g(F_2) 
$$
as desired.
\end{proof}

\begin{figure}[htb]
    \begin{center}
   \includegraphics[width=1.5in]{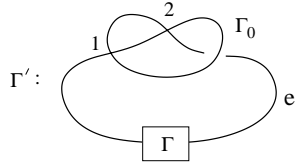}\\
    \caption{Connecting a pretzel }\label{connecttref}
    \end{center}
\end{figure}

We say that the boundary component $\delta$ of a ribbon graph  $\Gamma$ \textit{traces} the edge $e$ of $\Gamma$ if 
the boundary of the ribbon that contains $e$ is a portion of $\delta$. There are at most two boundary components that can trace an edge.

\begin{lemma}\label{plus1lem}
Let $\Gamma$ be an assembly graph, $\Gamma_0$ be the graph corresponding to the word $1212$, 
and $\Gamma'$ be the graph obtained by connecting an edge $e$ of $\Gamma$ with  
  $\Gamma_0$ as depicted in Figure  \ref{connecttref}.
Suppose $\gr(\Gamma)=[m,  n]$  
  for non-negative integers $m$ and  $n$, ($m\le n$).  

\noindent
{\rm (i)} Suppose for all ribbon graphs of $\Gamma$, 
 the two boundary curves tracing edge 
   $e$ belong to two distinct boundary components, 
then $\gr(\Gamma')=[m+1,  n+1 ]$.  

\noindent
{\rm (ii)} Suppose  for some ribbon graphs of $\Gamma$,
the two boundary curves tracing edge   
 $e$ belong to the same boundary component, 
then $\gr(\Gamma')=[m, n+1 ]$.  
\end{lemma}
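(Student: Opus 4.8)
The plan is to reduce everything to a count of boundary components via Lemma~\ref{Eulerlem}, and then to a purely local analysis at the edge $e$. Since $\Gamma_0$ has two vertices, the graph $\Gamma'$ has exactly two more vertices than $\Gamma$, and the ribbon graphs $F'$ of $\Gamma'$ are in bijection with pairs consisting of a ribbon graph $F$ of $\Gamma$ together with a choice of connection at the two vertices of $\Gamma_0$. Applying Lemma~\ref{Eulerlem} to both $F$ and $F'$ and using that $\Gamma'$ has two more vertices gives
\[
g(F') - g(F) = 1 - \tfrac{1}{2}\bigl(b(F') - b(F)\bigr),
\]
so the whole problem becomes the determination of $b(F')-b(F)$, the change in the number of boundary components produced by splicing $\Gamma_0$ into $e$. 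This change is supported in a neighborhood of $e$, hence it depends only on (a) whether the two boundary curves tracing $e$ lie on the same boundary component of $F$ (call this Case S) or on two distinct ones (Case D), and (b) the two connection choices made inside $\Gamma_0$.

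The core step is a local dichotomy, which I would establish by the strand-connection bookkeeping of Figures~\ref{strand-connection} and~\ref{perm-connection} applied to the splice of Figure~\ref{connecttref}, using that $\Gamma_0$ is the two-vertex graph $1212$ whose ribbon graphs all have genus $1$ (its two vertices always contribute one handle). I expect the outcome: if $F$ is Case D, then every connection choice inside $\Gamma_0$ gives $b(F')=b(F)$, hence $g(F')=g(F)+1$; if $F$ is Case S, then the connection choices inside $\Gamma_0$ realize \emph{both} $b(F')=b(F)$ and $b(F')=b(F)+2$. Geometrically, when both sides of $e$ lie on one boundary component, the two internal choices of $\Gamma_0$ either leave this component intact while adding the handle ($b$ unchanged, genus $+1$) or break it into two while absorbing the handle's extra boundary ($b$ increased by $2$, genus unchanged). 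Thus in all cases $g(F')\in\{g(F),g(F)+1\}$, and the lower value $g(F')=g(F)$ is attainable precisely when $F$ is Case S.

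With the dichotomy in hand, part (i) is immediate: if every ribbon graph of $\Gamma$ is Case D, then $g(F')=g(F)+1$ for all ribbon graphs, so $\gr(\Gamma')=\{\,g+1 \mid g\in\gr(\Gamma)\,\}=[m+1,n+1]$. For part (ii), the upper end is uniform: $g(F')\le g(F)+1\le n+1$, and a genus-$n$ ribbon graph together with a connection of $\Gamma_0$ realizing $+1$ shows $n+1\in\gr(\Gamma')$. The lower bound $g(F')\ge g(F)\ge m$ always holds, so it remains to realize the value $m$; since $g(F')=g(F)$ forces $F$ to be Case S, this amounts to producing a \emph{minimum-genus} Case S ribbon graph. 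Once $m$ and $n+1$ are both realized, Lemma~\ref{conseclem} (consecutiveness of genus ranges) fills in the intermediate values and yields $\gr(\Gamma')=[m,n+1]$.

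The main obstacle is exactly this last point: the hypothesis of (ii) only supplies \emph{some} Case S ribbon graph, of possibly non-minimal genus, whereas reaching $m$ requires one of genus $m$. I would resolve it by a connection-change argument along $e$: a connection change at an endpoint vertex of $e$ toggles the Case S/Case D status, and single connection changes move genus by at most one (the mechanism behind Lemma~\ref{conseclem}); tracking the joint effect on genus and on the same-component/distinct-component status should let one descend from an arbitrary Case S graph to a minimum-genus Case S graph, equivalently ruling out the scenario in which every minimum-genus ribbon graph is Case D while Case S occurs only strictly above the minimum. Besides this, the other delicate part is the exhaustive figure-based verification of the dichotomy itself — in particular confirming that Case D never produces a $+2$ genus jump and that Case S genuinely realizes both outcomes.
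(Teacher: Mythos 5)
Your argument is, up to the unfinished last step, the paper's own argument: the paper likewise reduces everything to counting boundary components via Lemma~\ref{Eulerlem} and then checks the connection choices at the two vertices of $\Gamma_0$ (Figure~\ref{tref}, grouped into the two schematic types (E) and (F)), and the dichotomy you predict is exactly what it finds --- type (F) always leaves $b$ unchanged (genus $+1$), while type (E) leaves $b$ unchanged in your Case D (two components merge but a new small circle appears) and raises $b$ by $2$ in Case S (the component splits and a new circle appears). Part (i) is then identical in both treatments.

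The point you flag as the ``main obstacle'' in (ii) is real, and you should know that the paper does \emph{not} resolve it: its proof of (ii) takes a Case~S ribbon graph $F$ of genus $g$, shows the two choices inside $\Gamma_0$ realize $g$ and $g+1$, and then simply asserts $\gr(\Gamma')=[m,n+1]$. If every minimum-genus ribbon graph of $\Gamma$ were Case D and Case S occurred only at higher genus, this argument would only yield $[m+1,n+1]\subseteq\gr(\Gamma')\subseteq[m,n+1]$; and in the application inside Theorem~\ref{mainthm} the Case~S ribbon graph actually exhibited is the \emph{maximum}-genus one (the single-boundary-component surface), which is the worst case for this issue. So your diagnosis identifies a gap in the paper itself rather than a defect of your approach relative to it. That said, your proposed repair is not yet a proof: the claim that a connection change at an endpoint vertex of $e$ ``toggles'' the same-component/distinct-component status of the two curves along $e$ is unjustified --- Figure~\ref{strand-connection} shows that the effect of a single connection change on which arcs become joined depends on the global connection pattern, not just the local picture, so the descent from an arbitrary Case~S ribbon graph to a minimum-genus one needs a genuine argument. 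Either supply that argument or restate (ii) with the hypothesis that some minimum-genus ribbon graph is Case~S (noting that the paper's later application would then also need to verify that stronger hypothesis).
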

\begin{proof}
In Figure  \ref{tref}, the boundary curves corresponding to all possible connections at vertices of $\Gamma_0$
are depicted in (A) through (D). 
The number of boundary components are $3$ for (A) and (C), and $2$ for (B) and (D).
In (E) and (F), the situations of how they are connected are schematically depicted,
respectively. The figure (E) represents (A) and (C), and (F) represents (B) and (D).

\begin{figure}[htb]
    \begin{center}
   \includegraphics[width=4in]{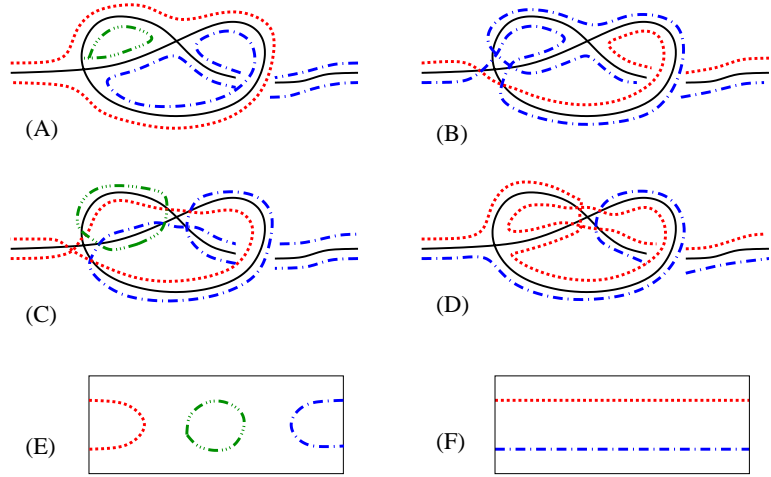}\\
    \caption{ Boundary curves for $1212$ }\label{tref}
    \end{center}
\end{figure}

{\rm (i)}  Consider a ribbon graph $F$ for $\Gamma$ with genus $g \in \gr(\Gamma)$. 
For each ribbon graph $F_0$ in (A) through (D) of $\Gamma_0$ in Figure  \ref{tref}, we obtain a ribbon graph $F'$
of $\Gamma'$ by connecting corresponding boundaries of 
the %
ribbon graphs for $\Gamma_0$ and $\Gamma$. 
If the connection  of the boundary curves of a ribbon graph $F_0$ of $\Gamma_0$ is as in Figure  \ref{tref}(E),
then the two  curves tracing  
  $e$ become connected, 
and there is an additional component indicated by a small circle 
in the middle of (E).  
 So 
  the number of 
boundary components remains unchanged, and  the number of vertices increases by $2$.
Hence $g(F')=g+1$, and $g+1$ is an element of $\gr(\Gamma')$.
If the connection of the boundary curves of  a ribbon graph $F_0$ of $\Gamma_0$ is as in Figure  \ref{tref}(F),  then again
the number of boundary curves  of the ribbon graph for $\Gamma'$ remains the same as $F$, 
and  the number of vertices increases by $2$, so that $g(F')=g+1$,
hence $g+1$ is an element of $\gr(\Gamma')$. 
Therefore  $\gr(\Gamma)=[ m+1,  n+1 ]$.  

{\rm (ii)} If the connection  of the boundary curves 
of a ribbon graph $F_0$ of $\Gamma_0$ is as in Figure  \ref{tref}(E),
then the single  
 boundary component 
 tracing 
  $e$ becomes disconnected, and there is an additional component
  in $F'$.  
 Therefore the number of 
boundary components  increases by $2$, and the number of vertices increases by $2$. 
Hence $g(F')=g$, and $g$ is an element of $\gr(\Gamma')$.
If the connection  of the boundary curves 
of a ribbon graph $F_0$ of $\Gamma_0$ is as in Figure  \ref{tref}(F),  then 
the number of boundary components of the ribbon graph for $\Gamma'$ remains the same as that of $F$.
In this case  the number of vertices increases by $2$, so $g(F')=g+1$,
and $g+1$ is an element of $\gr(\Gamma')$. 
Therefore  $\gr(\Gamma)=[ m,  n+1] $. 
\end{proof}

As to the condition (i) in Lemma \ref{plus1lem}, we note the following.

\begin{lemma}\label{planarlem}
Let $\Gamma$ be a planar assembly graph and $e$ be an edge in $\Gamma$. 
Then  for all 
 ribbon graphs of $\Gamma$, 
there are two distinct 
boundary 
components  
 that trace $e$.
\end{lemma}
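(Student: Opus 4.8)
The plan is to use the checkerboard $2$-colouring available for a connected $4$-regular planar graph and to show that it forces the two sides of every edge into different boundary components, in \emph{every} ribbon graph at once.

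Since $\Gamma$ is connected and $4$-regular, every vertex has even degree, so $\Gamma$ is Eulerian and hence bridgeless (every edge lies on a cycle). Fixing a planar embedding, the faces then admit a proper $2$-colouring: adjacent faces across an edge receive opposite colours, say black and white. Because $\Gamma$ is bridgeless, each edge $e$ borders two \emph{distinct} faces, so the two sides of the band of $e$ inherit opposite colours; this colours, once and for all, the two boundary arcs that can trace $e$ in any ribbon graph.

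The heart of the argument is a purely local claim: at every vertex, and for \emph{each} of the two band-connection choices, the boundary arcs join only equally coloured sides. Around a $4$-valent vertex the four incident faces alternate in colour, so the two white faces occupy opposite corners and the two black faces the remaining corners; the four white-bordering edge-sides form one class and the black-bordering ones the other. Inspecting Figures~\ref{strand-connection} and \ref{perm-connection}, the planar connection pairs the two white sides inside each corner and the two black sides inside each corner, while the connection change performs the diagonal swap within each colour class; in both cases same-coloured sides are joined. I would double-check this against the parity remark following Lemma~\ref{Eulerlem}: a colour-mixing reconnection at a single vertex would change $b(F)$ by an odd number, contradicting $b(F)\equiv n \pmod 2$, whereas the simultaneous diagonal swap within each colour class changes $b(F)$ by $0$ or $\pm 2$, as required.

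Granting the local claim, in an arbitrary ribbon graph the colour of a boundary component is constant --- it is preserved along each edge and, by the local claim, across each vertex --- so every boundary component is monochromatic. Since the two sides of $e$ carry opposite colours, the arcs tracing $e$ lie in a white and in a black component, which are necessarily distinct; hence two distinct boundary components trace $e$. The one real obstacle is the local claim, namely verifying that the two rigid-vertex connection choices are exactly the two monochromatic pairings and never a colour-mixing one; the explicit figures together with the parity check above resolve it.
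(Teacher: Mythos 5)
Your proof is correct and follows essentially the same route as the paper: checkerboard-colour the faces of a planar diagram, observe that both band-connection choices at a rigid vertex join only same-coloured boundary arcs (the paper phrases this as an induction over the sequence of connection changes starting from the planar ribbon graph), and conclude that every boundary component is monochromatic while the two arcs along any edge have opposite colours. Your explicit bridgeless/Eulerian justification for the two sides of an edge receiving distinct colours, and the parity cross-check, are small additions the paper leaves implicit.
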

\begin{proof}
Fix a planar diagram of $\Gamma$, still denoted by $\Gamma$. 
The closure of a topological, thin  neighborhood of $\Gamma$ in the plane determines a ribbon graph $F$.
Each  boundary curve of $F$ corresponds to a region of the complement of $F$
(a connected component of $\R^2 \setminus F$)  in the plane. 
In this construction, the connection of boundary curves at every vertex 
has the form in  Figure~\ref{flip}(A).
We assign checker-board ``black" and ``white" coloring 
  to these regions (there is one, see  \cite{Kauff} for example)
  such that regions bordering the same edge have distinct colors.  
The corresponding boundary curves of $F$ inherit this coloring. 
Hence every edge is traced by boundary components of two distinct colors. 
Let $B$ and $W$ be (disjoint, non-empty) families of boundary curves colored black and white, respectively.
Any ribbon graph $F'$ of $\Gamma$ is obtained from $F$ by changing boundary connections at some vertices. 
Let $v_1, \ldots, v_k$ be a sequence of vertices at which connections are changed in this order 
to obtain $F'$ from $F$, and let $F_i$ be the ribbon graph obtained from $F$ by changing boundary connections at $\{ v_1, \ldots, v_i \}$, $1 \leq i \leq k$. 
When we change the boundary connection at $v_1$ and obtain $F_1$, 
two diagonal boundary curves of the same color become connected.
This corresponds to the connection $(f)$ in Figure~\ref{strand-connection}. 
In Figure~\ref{flip}(A), the curves of the pair $1$ and $3$, and the pair $2$ and $4$ are of the same colors.
The connected curves inherit  
 the color, and each component of  the boundary curves of $F_1$ is colored black or white, 
 and every edge of $\Gamma$ is traced by two boundary components of $F_1$, one white and one black. 
Let $B_1$ and $W_1$ be  families of boundary curves 
of $F_1$  
 colored black and white, respectively. They are disjoint and non-empty.
Inductively, suppose that $F_i$ has two families of disjoint and non-empty boundary curves $B_{i}$ and $W_i$,
and that every edge of $F_i$ is traced by curves from both families. 
At $v_{i+1}$, the boundary curves are as depicted in 
 Figure~\ref{flip}(A), where the curves of the pair $1$ and $3$, and the pair $2$ and $4$ are of the same colors,
respectively, and by changing the connection to (C), the curves of the same colors are affected diagonally. 
Then the boundary curves of $F_{i+1}$ inherit the colors from those of $F_i$ to form two families $B_{i+1}$ and $W_{i+1}$,
for $i=1, \ldots, k-1$, and every edge of  $F_{i+1} $ is traced by curves of distinct colors. 
This concludes that the 
 boundary curves at each edge of $F_k$ have two distinct colors, and hence they belong to distinct boundary components. 
\end{proof}

\begin{lemma}\label{fullrangelem}
For any positive integer $n$, there is no assembly graph with $2n-1$ vertices 
with genus range $[0, n]$. 
\end{lemma}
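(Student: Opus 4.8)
The plan is to argue by contradiction, playing the two endpoints of the range $[0,n]$ against each other through the Euler relation of Lemma~\ref{Eulerlem}. Suppose some assembly graph $\Gamma$ with $2n-1$ vertices had genus range $[0,n]$. First I would exploit the left endpoint: since $0 \in \gr(\Gamma)$, the graph $\Gamma$ admits a genus-$0$ cellular embedding, so $\Gamma$ is planar. This is the hypothesis needed to invoke Lemma~\ref{planarlem}.

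Next I would exploit the right endpoint. Since $n \in \gr(\Gamma)$, there is a ribbon graph $F$ of $\Gamma$ of genus $n$. Substituting the vertex count $2n-1$ and $g(F)=n$ into Lemma~\ref{Eulerlem} gives $n = \frac{1}{2}\bigl((2n-1) - b(F) + 2\bigr)$, whence $b(F)=1$. Thus a genus-$n$ ribbon graph on $2n-1$ vertices is forced to have a single boundary component, and then both boundary curves tracing any given edge $e$ necessarily lie on that one component. I would then invoke Lemma~\ref{planarlem} to close the argument: because $\Gamma$ is planar, every ribbon graph of $\Gamma$ has each edge traced by two \emph{distinct} boundary components, so $b(F)\geq 2$ for all ribbon graphs $F$. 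Since $1 < 2$, no ribbon graph of a planar $\Gamma$ can realize genus $n$, contradicting $n\in\gr(\Gamma)$; hence $[0,n]$ cannot occur as a genus range on $2n-1$ vertices.

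The crux—really the single idea the proof turns on—is recognizing that Lemma~\ref{planarlem} supplies a \emph{uniform} lower bound $b(F)\geq 2$ over all ribbon graphs of a planar graph, which is precisely incompatible with the lone boundary component that the Euler formula demands for genus $n$ when there are $2n-1$ vertices. Phrased positively, for a planar graph on $2n-1$ vertices the maximal genus is at most $n-1$: the number of boundary components shares the parity of $2n-1$ and so is odd, and taking $b(F)=3$ (the least odd value $\geq 2$) in Lemma~\ref{Eulerlem} yields $g(F)=n-1$. Everything beyond this observation is a direct substitution into the Euler relation, so I expect no serious technical obstacle once the role of Lemma~\ref{planarlem} is identified.
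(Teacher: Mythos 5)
Your proof is correct and follows essentially the same route as the paper: planarity from $0\in\gr(\Gamma)$, then Lemma~\ref{planarlem} forcing at least two boundary components on every ribbon graph, which via Lemma~\ref{Eulerlem} rules out the single boundary component that genus $n$ on $2n-1$ vertices would require. Your added parity remark (so $b(F)\geq 3$ and the maximal genus of a planar graph on $2n-1$ vertices is at most $n-1$) is a pleasant sharpening but not a different argument.
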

\begin{proof}
Suppose there is such an assembly graph $\Gamma$.
Since its genus range contains $0$, $\Gamma$ is planar.
By Lemma~\ref{planarlem}, every ribbon graph of $\Gamma$ has more than one boundary component, 
and therefore, the genus cannot be $n$ by Lemma~\ref{Eulerlem}.
\end{proof}

\begin{lemma}\label{notnnlem}
For any positive integer $n$, there is no assembly graph with $2n-1$ vertices 
with genus range $[n, n]$. 
\end{lemma}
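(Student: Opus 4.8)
The plan is to reformulate the statement in terms of boundary components and then exhibit a single ribbon graph whose genus is at most $n-1$. By Lemma~\ref{Eulerlem}, a ribbon graph $F$ of a graph with $2n-1$ vertices has genus $g(F)=\tfrac12((2n-1)-b(F)+2)$, so $g(F)=n$ exactly when $b(F)=1$; since $b(F)$ has the same parity as $2n-1$, the value $b=1$ is the unique minimum, attaining the maximum genus $n$. Hence $\gr(\Gamma)=[n,n]$ would force $b(F)=1$ for every one of the $2^{2n-1}$ ribbon graphs of $\Gamma$. To contradict this it suffices to produce one ribbon graph $F_0$ with $b(F_0)\ge 3$, equivalently with $g(F_0)\le n-1$.

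To build such an $F_0$ I would use the annulus-with-bands description of the ribbon construction. Thicken the single Eulerian transversal circle $C$ into an annulus $A$, and realize each vertex by a band $B_v$ joining the two passages of $C$ through $v$; the two connection choices at $v$ (Figure~\ref{flip}) are precisely the two ways of attaching $B_v$. Then $F=A\cup\bigcup_v B_v$ has $\chi(F)=-(2n-1)$, so $H_1(F;\F_2)\cong\F_2^{2n}$, with basis the class $[C]$ of the transversal together with the band loops $\ell_v=B_v\cup(\text{arc of }C)$. I would then invoke the standard fact that for an orientable surface the $\F_2$ intersection form is alternating and its rank equals $2g(F)$ (equivalently, $b(F)-1$ is the dimension of its radical).

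The crux is that the connection choice at $v$ is exactly the local move toggling the mod-$2$ intersection number $[C]\cdot\ell_v$: one choice attaches $B_v$ so that its core crosses $C$, the other so that it does not. Choosing at every vertex the non-crossing option yields a ribbon graph $F_0$ in which $C$ meets the core of every band only along shared arcs, so $[C]\cdot\ell_v=0$ for all $v$, and of course $[C]\cdot[C]=0$. Thus $[C]$ is a nonzero element of the radical of the intersection form of $F_0$, whence its rank is at most $2n-1$; being even, the rank is at most $2n-2$, so $g(F_0)\le n-1$ and $b(F_0)\ge 3$. This contradicts $\gr(\Gamma)=[n,n]$.

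The main obstacle is the assertion of the previous paragraph, that the two connection choices at a vertex genuinely toggle whether the band crosses the transversal; this must be read off carefully from the local pictures (Figures~\ref{flip} and \ref{strand-connection}), and it is exactly the orientability bookkeeping already underlying the ``change by $0$ or $\pm2$'' used in Lemma~\ref{conseclem}. As a consistency check, the easy subcase in which $\Gamma$ has a loop is already excluded by Corollary~\ref{loopcor}: stripping the loop would give a graph with $2n-3$ vertices of the same genus range $[n,n]$, which is impossible since by Lemma~\ref{Eulerlem} such graphs have maximum genus $n-1$. One could also present the whole argument purely combinatorially, describing $F_0$ directly as the ribbon graph in which the transversal is boundary-inessential and counting its boundary components via Figure~\ref{strand-connection}, which avoids homology at the cost of a longer case analysis.
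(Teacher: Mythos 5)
Your reduction is fine: by Lemma~\ref{Eulerlem} and the parity of $b(F)$, the range $[n,n]$ would force every one of the $2^{2n-1}$ ribbon graphs to have exactly one boundary component, so it suffices to exhibit a single ribbon graph with $b\ge 2$ (hence $\ge 3$). The gap is in the one step you yourself flag as the crux: the claim that the two connection choices at a vertex $v$ toggle whether ``the band crosses the transversal,'' so that choosing the ``non-crossing'' option everywhere puts $[C]$ in the radical of the intersection form. This is not what the connection change does. Switching the connection at a rigid $4$-valent vertex replaces the cyclic order $(e_1,e_2,e_3,e_4)$ by its reverse $(e_1,e_4,e_3,e_2)$, and reversal preserves which pairs of edge-ends interleave; in particular the two transversal strands through $v$ (the chords $e_1e_3$ and $e_2e_4$) cross inside the vertex square for \emph{both} choices, and each boundary arc still joins neighbouring edges for both choices (compare Figure~\ref{flip}(A) and (C)). There is no ``non-crossing option.'' Relatedly, your annulus-with-boundary-bands model is not the ribbon graph of $\Gamma$: the neighbourhood of the immersed transversal is obtained by plumbing the two sheets at each vertex, not by attaching a band along $\partial A$, and the two models place $C$ differently in the surface, so intersection numbers computed in your model need not be the right ones. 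Finally, even granting a correct model, $[C]\cdot\ell_v$ cannot be read off locally at $v$, since $\ell_v$ runs along edges of $C$ and the perturbation needed to compute the pairing picks up contributions governed by the connection choices at the other vertices as well. Note that $[C]$ lying in the radical is equivalent to the embedding being checkerboard colourable (every edge traced by two components of opposite colours), which is a genuinely global property; asserting that some ribbon graph has it is essentially the whole content of the lemma, and your argument does not establish it.

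For comparison, the paper proves the needed fact (``every assembly graph has a ribbon graph with more than one boundary component'') by a direct combinatorial construction: numbering the edges $1,\dots,2k$ along the transversal, each vertex meets two odd and two even edges, and the odd edges at a vertex are neighbours, so the odd edges form a disjoint union of cycles. Walking around one such cycle and choosing the connection at each successive vertex so that a fixed boundary component $\delta$ continues along the cycle, one gets a ribbon graph in which $\delta$ traces only odd edges; the even edges must then be traced by some other component, so $b\ge 2$. If you want to keep a homological flavour, you would still need a construction of this kind (or a proof that some ribbon graph is checkerboard colourable) to make your argument go through; your loop-stripping consistency check via Corollary~\ref{loopcor} is correct but only disposes of a very special case.
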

\begin{proof}  We  
  observe that for every assembly graph there is a ribbon graph with more than one boundary component. Then the Lemma follows immediately. Let $\Gamma$ be an assembly graph with $k$ vertices. Starting from one vertex, 
we enumerate the edges of $\Gamma$ along the transversal from $1$ to $2k$. Since the transversal consists of an Eulerian path in which two consecutive edges are not neighbors, every vertex in $\Gamma$ is incident to two even numbered edges and two odd numbered edges. Moreover, the odd numbered (similarly, even numbered) edges incident to a common vertex are neighbors to each other. Let $G$ be the subgraph of 
$\Gamma$ that is induced by the odd numbered edges. This graph consists of $k$ vertices and $k$ edges, so it is a collection of cycles. Consider one cycle $v_1,v_2,\ldots, v_s$ of $G$.  Let $e_1,e_2,\ldots,e_s$ be the edges of this cycle such that 
$v_i$ is incident to $e_{i-1}$ and $e_i$ 
for $i=2, \ldots, s$, and $v_1$ is incident to $e_1$  and $e_s$.  
Consider 
the ribbon graph $F$ of $\Gamma$ obtained in the following way. Choose an arbitrary connection of the boundary components at vertex $v_1$. One of the boundary components
 at $v_1$, call it $\delta$, traces both $e_s$ and $e_1$. Then at $v_2$ we choose a connection of the
  boundary components such that $\delta$ traces both $e_1$ and $e_2$. One of the connections (A) or (C) in 
  Figure~\ref{flip} provides this condition. Inductively, suppose the boundary connections at
   $v_1,v_2,\ldots,v_i$ have been chosen such that $\delta$ traces $e_s,e_1,\ldots, e_i$. 
   Then at $v_{i+1}$ we choose the boundary connection such that $\delta$ traces $e_i$ and $e_{i+1}$. 
   For $i=s$, we have that $\delta$ traces all edges $e_1,\ldots,e_s$ of the cycle. Note that $\delta$ must 
   trace $e_s$ only once, i.e., it `closes' on itself, because the other boundary component tracing $e_s$ 
   traces also 
   an even-numbered edge incident to $v_1$, the other neighbor of $e_s$. Since $\delta$ traces edges only in $G$, the ribbon graph $F$ must contain boundary components that trace the edges not in $G$, hence, 
   $F$ has more than one component. 
\end{proof}

\section{Realizations of Genus Ranges}\label{Realization}

In this section we construct graphs that realize some desired sets of genus ranges.

\begin{figure}[htb]
    \begin{center}
   \includegraphics[width=3in]{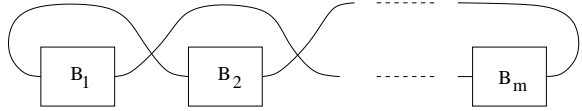}\\
    \caption{ Repeating cross sum}\label{connectall}
    \end{center}
\end{figure}

\begin{proposition}\label{nm-prop} 
For any integer $m\geq 0$, there exists an integer $n$ such that  $\{ m \}$
is a genus range in ${\cal GR}_n$. 
\end{proposition}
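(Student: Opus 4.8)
The plan is to realize $\{m\}$ as a genus range by iterating the cross sum of a single elementary building block whose genus range is already the singleton $\{1\}$, and to exploit the fact that the cross sum adds genus ranges (Lemma~\ref{joinlem}). The key observation is that a Minkowski sum of singletons is again a singleton, so repeating the operation $m$ times produces a graph whose genus range is forced to be exactly $\{m\}$, and the consecutive-integer phenomenon of Lemma~\ref{conseclem} never has a chance to spread the range out.

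First I would dispose of the base case $m=0$. Applying Lemma~\ref{looplem} to the empty word $\epsilon$ gives $\gr(\Gamma_{11}) = \gr(\Gamma_\epsilon) = \{0\}$, since a graph with no $4$-valent vertex is just a transversal circle embedding on the sphere; equivalently one checks directly that the two-vertex graph $\Gamma_{1122}$ has genus range $\{0\}$. Either way $\{0\}\in\GR_n$ for a suitable small $n$.

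For $m\geq 1$, set $\Delta = \Gamma_{1212}$, so that $\gr(\Delta)=\{1\}$ by Remark~\ref{highest-single-rem}. Define graphs $\Gamma^{(1)},\Gamma^{(2)},\dots$ inductively by $\Gamma^{(1)}=\Delta$ and by letting $\Gamma^{(k+1)}$ be a cross sum of $\Gamma^{(k)}$ with $\Delta$ (for any admissible choice of the edges connected to the figure-eight). Each $\Gamma^{(k)}$ is again a connected simple assembly graph, so Lemma~\ref{joinlem} applies at every stage; assuming inductively that $\gr(\Gamma^{(k)})=\{k\}$, we obtain
$$
\gr(\Gamma^{(k+1)}) = \{\, g_1 + g_2 \mid g_1 \in \gr(\Gamma^{(k)}),\ g_2 \in \gr(\Delta) \,\} = \{\, k+1 \,\}.
$$
Hence $\gr(\Gamma^{(m)})=\{m\}$. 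Since the cross sum adds $|\Delta|+1=3$ vertices at each step, $|\Gamma^{(m)}| = 3m-1 =: n$, giving the concrete conclusion $\{m\}\in\GR_{\,3m-1}$, so such an $n$ certainly exists.

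The induction itself is immediate; the only points genuinely needing verification are the two input facts, namely $\gr(\Gamma_{1212})=\{1\}$ and the base value $\gr(\Gamma_{11})=\{0\}$ (both routine to confirm from Lemma~\ref{Eulerlem} by listing the few ribbon graphs), together with the remark that a cross sum of two assembly graphs is again an assembly graph so that Lemma~\ref{joinlem} may be reapplied. Given these, the Minkowski-sum formula does all the work, and the potential obstacle—that enlarging the graph might widen the range into several consecutive integers—does not arise precisely because singletons are closed under addition.
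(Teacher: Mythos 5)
Your proposal is correct and follows essentially the same route as the paper: both take the two-vertex graph $\Gamma_{1212}$ with genus range $\{1\}$ and iterate the cross sum, invoking Lemma~\ref{joinlem} to see that the singleton ranges add up to $\{m\}$. Your explicit treatment of the base case $m=0$ and the vertex count $n=3m-1$ are harmless additions to the argument the paper gives.
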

\begin{proof}
Let $\Gamma_m$ be the graph obtained by connecting $m$ copies of $\Gamma_0$ in the boxes $B_1, \ldots, B_m$
as depicted by solid line in Figure  \ref{connectall}, where $\Gamma_0$ is the graph depicted  in the upper half of Figure  \ref{connecttref} 
 that corresponds to the word $1212$.
This is obtained from copies 
of  %
 $\Gamma_0$ by repeated application of cross sum of Definition \ref{joindef}.
Recall that $\Gamma_0$ has the genus range $\{ 1 \}$ (Remark~\ref{highest-single-rem} Item 1). 
By Lemma \ref{joinlem}, $\Gamma_m$ has the genus range $\{ m \}$.
\end{proof}

\begin{remark}\label{gr-0-lem}
{\rm
 A DOW $w$ is loop-nested if and only if  $\gr(\Gamma_w) = \{ 0 \}$. 
Corollary \ref{loopcor} implies that if $w$ is loop-nested then  $\gr(\Gamma_w) = \{ 0 \}$.
The converse  follows from a known characterization of signed
DOWs corresponding to closed normal planar curves (Theorem $1$ of \cite{CE1}).
}
\end{remark}

\begin{figure}[htb]
    \begin{center}
   \includegraphics[width=2.5in]{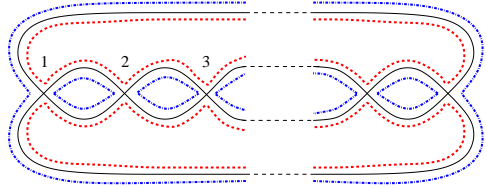}\\
    \caption{Graphs for repeat words }\label{repeat}
    \end{center}
\end{figure}

\begin{proposition}\label{01prop}
The graph corresponding to  any word obtained from 
$w=12\cdots n 12 \cdots n$,  for an odd integer $n$, by loop nesting 
has the genus range $\{0, 1\}$. 
\end{proposition}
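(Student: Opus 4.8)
The plan is to reduce the statement to a single ``base'' word and then pin down its genus range by controlling the number of boundary components of \emph{every} ribbon graph. By Corollary~\ref{loopcor} loop nesting preserves the genus range, so it suffices to prove $\gr(\Gamma_w)=\{0,1\}$ for the base word $w=12\cdots n\,12\cdots n$ itself. The first thing I would record is the one structural feature that drives everything: writing the two occurrences of the symbol $i$ at positions $i$ and $n+i$, any two symbols $i<j$ appear in cyclic order $i,j,i,j$, so every pair of symbols interlaces; equivalently the interlacement (chord) graph of $w$ is the complete graph $K_n$. Since $g(F)=\tfrac12(n-b(F)+2)$ by Lemma~\ref{Eulerlem}, the target $\gr(\Gamma_w)=\{0,1\}$ is equivalent to: every ribbon graph satisfies $b(F)\ge n$ (this gives genus $\le 1$, and together with $b(F)\le n+2$ and the parity $b(F)\equiv n\pmod 2$ it pins $b(F)\in\{n,n+2\}$), and both values $n$ and $n+2$ are attained. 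By Lemma~\ref{conseclem} the attained genera form an interval, so it is in fact enough to produce one genus-$0$ embedding, one genus-$1$ embedding, and to rule out genus $\ge 2$.

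For the genus-$0$ end I would use that $w$ is a \emph{realizable} Gauss code: it is the code of the standard alternating diagram of the $(2,n)$ torus knot, i.e.\ the closure of the $2$-braid $\sigma_1^{\,n}$ (for $n=1,3$ this is $11$ and $123123$, and the alternating interleaving persists for all $n$). That diagram lives in the plane, so its shadow $\Gamma_w$ is planar and $0\in\gr(\Gamma_w)$; concretely the planar ribbon graph has, by Euler's formula, $2n-n+2=n+2$ faces, so $b=n+2$. For the genus-$1$ end I would exhibit one explicit connection choice and trace its boundary curves directly to find exactly $n$ components; the ``all-straight'' connection is the natural candidate, and for $n=3$ a direct trace already yields $b=3=n$. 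This supplies $1\in\gr(\Gamma_w)$ once the lower bound on $b$ is established.

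The real work, and the main obstacle, is the uniform lower bound: \emph{every} connection choice must give $b(F)\ge n$. This is exactly the step that forces us to use the rigid cyclic orders, since the crude bounds are useless here: the cycle rank $\beta=E-V+1=n+1$ only gives $g\le\lfloor(n+1)/2\rfloor$, and the checkerboard argument of Lemma~\ref{planarlem} (applicable because $\Gamma_w$ is planar) yields only two nonempty colour classes, hence merely $b\ge 2$, which with $b\equiv n\equiv 1\pmod 2$ improves to just $b\ge 3$. The approach I would take is to track how a single connection change acts on the boundary curves in the presence of \emph{complete} interlacement: each change alters $b$ by $0$ or $\pm 2$ (Figure~\ref{strand-connection}), and because every remaining chord interlaces the one being switched, I expect the two boundary curves tracing the edges at that vertex can never collapse the count below $n$. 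The clean way to phrase the target is the sharp identity $b(F)\in\{n,n+2\}$ for all $F$, which I anticipate proving by showing that the $GF(2)$ bilinear/quadratic data assembled from the all-ones interlacement matrix of $K_n$ and the sign vector of the connection choices has corank forcing this dichotomy; making that statement exact and uniform across all $2^n$ sign vectors is the crux.

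An alternative route for the lower bound, which I would keep in reserve, is induction on $n$. Deleting the symbol $n$ (smoothing out one crossing of the twist region) turns $w$ into the base word for $n-1$, so there is a genuine recursive structure; but this move flips the parity of $n$ and the proposition is parity-sensitive, so a workable induction would have to go $n\to n-2$ and would need a lemma describing the effect of removing a twist-region crossing on the boundary-component count. Either way, once $b(F)\ge n$ is secured for the base word, the planar and explicit genus-$1$ embeddings give $\gr(\Gamma_w)=\{0,1\}$, and Corollary~\ref{loopcor} upgrades this to every word obtained from $w$ by loop nesting, completing the proof.
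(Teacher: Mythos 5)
Your framing is sound and the two endpoints of your argument match the paper's: loop nesting is discharged by Corollary~\ref{loopcor}, the genus-$0$ embedding comes from planarity of the standard $(2,n)$-torus-knot shadow (the paper's Figure~\ref{repeat}), and a single connection change produces genus $1$. But the heart of the proposition --- that \emph{no} choice of connections among the $2^n$ possibilities yields genus $\ge 2$, i.e.\ that $b(F)\ge n$ uniformly --- is exactly the step you do not prove. You correctly identify it as ``the crux,'' note that the crude bounds (cycle rank, checkerboard colouring) are insufficient, and then offer two candidate strategies (a $GF(2)$ rank computation on the all-ones interlacement matrix of $K_n$ with the sign vector on the diagonal, or an induction $n\to n-2$), but neither is carried out: you say you ``expect'' the count cannot collapse and ``anticipate'' the corank dichotomy. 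As written this is a plan, not a proof, and the oddness hypothesis on $n$ --- which must enter somewhere, since the statement is parity-sensitive --- is never located in either strategy.

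For contrast, the paper closes this gap with a short stability argument rather than algebra: starting from the planar ribbon graph (all vertices in configuration (A) of Figure~\ref{flip}), the first connection change merges the two checkerboard classes into one pair of components, and after that change every remaining vertex sits in configuration (h) of Figure~\ref{strand-connection}, for which a connection change leaves the number of boundary components unchanged \emph{and} preserves the configuration type at all other vertices; oddness of $n$ is what makes the (h)-pattern persist around the cycle. Hence every nonplanar ribbon graph has $b=n$ and genus $1$. Your $GF(2)$/trip-matrix route is plausible in principle (the corank of $A+\mathrm{diag}(d)$ for $A$ the adjacency matrix of $K_n$ over $GF(2)$ does control $b(F)$, and its rank for the relevant diagonals is computable), and if executed it would give a cleaner, more algebraic proof than the paper's picture-chasing --- but until that computation is actually done for all $2^n$ sign vectors, the proposal does not establish the proposition.
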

\begin{proof}
First we show that the graph $\Gamma_n$ corresponding to  a  word 
$w=12\cdots n 12 \cdots n$ has genus range $\{0,1\}$.
We start with the planar diagram depicted in Figure  \ref{repeat}.
Boundary curves of the ribbon graph obtained as a neighborhood of $\Gamma_n$ are
depicted by dotted lines. 
When the connection of  one vertex (the vertex $1$ in Figure  \ref{repeatswitch} left) is changed,
both pairs of diagonal curves are connected from two components to one
(see Figure~\ref{strand-connection}(f)). 
Hence the number of  boundary components reduce by $2$, and the genus increases 
from $0$ to $1$ by Lemma \ref{Eulerlem}.

\begin{figure}[htb]
    \begin{center}
   \includegraphics[width=5in]{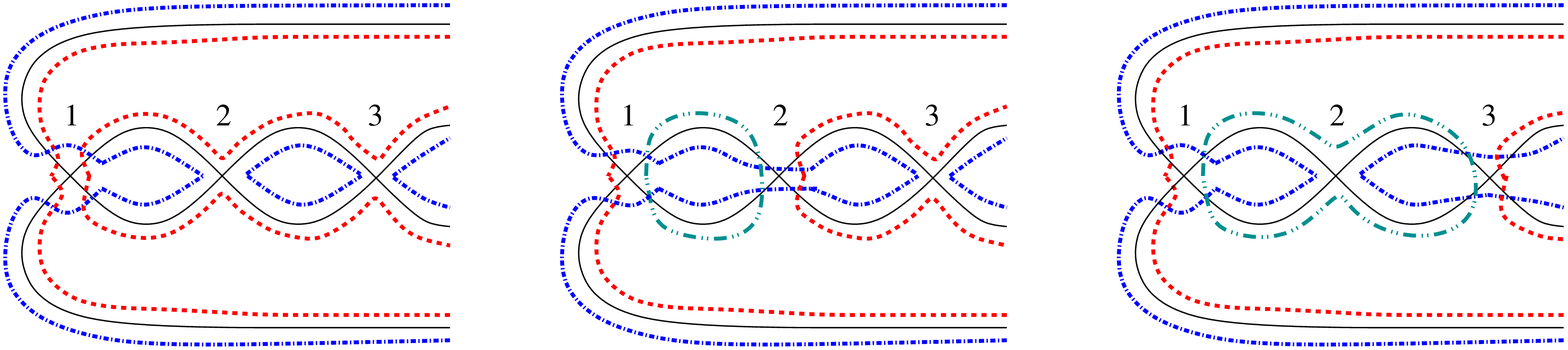}\\
    \caption{Changing connections for  repeat word graphs }\label{repeatswitch}
    \end{center}
\end{figure}

After the change at the vertex $1$, the boundary connection at every other vertex 
$2$ through $n$ is as depicted in Figure~\ref{strand-connection}(h).
Hence the number of the boundary components does not change by changing the boundary connections.
After the second change of the boundary connection at the vertex $i$, the connection at every vertex 
remains the same as in Figure~\ref{strand-connection}(h).
This property holds for odd $n$.
In Figure  \ref{repeatswitch} middle and right, changes are made at vertices $i=2$ and $3$, respectively, 
after the change at the vertex $1$. 

The statement follows from Lemma~\ref{looplem}.
\end{proof}

\begin{conjecture}
{\rm
Any DOW whose corresponding graph has  genus range $\{0, 1\}$ is 
obtained from the word $w=12\cdots n 12 \cdots n$ for an odd integer $n$ by loop nesting.
}
\end{conjecture}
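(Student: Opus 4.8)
The plan is to establish the (harder) forward direction: if $\gr(\Gamma_w)=[0,1]$ then $w$ is a loop nesting of a repeat word of odd length; the converse is exactly Proposition~\ref{01prop} combined with Corollary~\ref{loopcor}. Since $[0,1]\neq\{0\}$, Remark~\ref{gr-0-lem} tells us $w$ is not loop-nested, while $0\in\gr(\Gamma_w)$ forces $\Gamma_w$ to be planar. The first step is to pass to a \emph{core}: using Corollary~\ref{loopcor} I would repeatedly strip isolated chords (chords crossing no other chord, which after a cyclic permutation or reversal can be removed as a trailing $aa$), each time preserving the genus range, until reaching a DOW $w_0$ that is loop-reduced. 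Such a $w_0$ is automatically irreducible: if $w_0$ were a nontrivial concatenation of DOWs on disjoint alphabets, the genus would split additively (the argument of Lemma~\ref{joinlem}), so $\gr(\Gamma_{w_0})$ would be the sumset of two consecutive-integer ranges $[a_1,b_1]+[a_2,b_2]=[a_1+a_2,\,b_1+b_2]$; for this to equal $[0,1]$ one summand must be $\{0\}$, i.e.\ one factor is loop-nested, and its chords (being isolated in the interlacement graph) would already have been stripped. Thus $w_0$ is irreducible, loop-free, planar, and still has genus range $[0,1]$.

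The conceptual heart is to show that the interlacement graph of $w_0$ is complete, i.e.\ every pair of chords crosses. Granting this, the classification is immediate: in an all-crossing chord diagram on $k$ chords, the chord through any marked point forces every other chord to place exactly one endpoint in each of its two arcs, which pins each chord to a diameter joining point $i$ to point $i+k$; reading the labels around the circle gives, up to renaming, $w_0=12\cdots k\,12\cdots k$. Finally, since $0\in\gr(\Gamma_{w_0})$ the repeat word is planar, and the all-crossing graph on $k$ chords is planar precisely when $k$ is odd (for even $k$ it is non-planar, as $1212$ already witnesses with genus range $\{1\}$); hence $k$ is odd. Reassembling the stripped loops then exhibits $w$ as a loop nesting of $12\cdots k\,12\cdots k$ with $k$ odd, as claimed.

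The step I expect to be the main obstacle is proving that genus range $[0,1]$ forces a complete interlacement graph on the core. The genus is \emph{not} a function of the interlacement graph alone (the path-interlacement word $121323$ has range $[1,2]$ while the $K_3$-interlacement word $123123$ has range $[0,1]$), so completeness cannot simply be read off from a rank computation. Instead I would argue contrapositively: assuming chords $i$ and $j$ of the irreducible, loop-free $w_0$ do not cross, I would use that connection changes at distinct vertices alter the boundary-component count by $0$ or $\pm 2$ (as tabulated in Figure~\ref{strand-connection} and exploited in Lemma~\ref{conseclem}) to build a state whose ribbon graph has at most $k-2$ boundary components, hence genus at least $2$ by Lemma~\ref{Eulerlem}, contradicting $\max\gr(\Gamma_{w_0})=1$. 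The mechanism would route a single boundary curve through many edges, using the non-crossing of $i,j$ to merge components while connectivity of the interlacement graph supplies enough further crossings to keep reducing. Making this routing precise — presumably by an induction on $k$, or via the $GF(2)$ ``trip matrix'' of the associated signed Gauss code whose corank records the number of boundary curves — is the crux; the planarity constraint of Lemma~\ref{planarlem}, that in every ribbon graph each edge is traced by two \emph{distinct} boundary curves, should be the extra ingredient that eliminates the residual non-complete configurations.
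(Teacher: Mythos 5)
This statement is not proved in the paper at all: it is stated (and left) as an open conjecture, so there is no proof of the authors' to compare yours against. Judged on its own, your proposal is a plausible reduction strategy but not a proof, and the gap sits exactly where you place it yourself. The entire content of the conjecture, after your (essentially correct) preliminary reductions --- converse via Proposition~\ref{01prop} and Corollary~\ref{loopcor}, planarity from $0\in\gr$, passing to a loop-free core, excluding reducibility by additivity of genus under concatenation, and the observation that a chord diagram with complete interlacement graph must be the antipodal diagram $12\cdots k\,12\cdots k$ --- is concentrated in the claim that a loop-free, irreducible, planar DOW with genus range $[0,1]$ has \emph{complete} interlacement graph. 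For that step you offer only a proposed mechanism (``route a single boundary curve through many edges, using the non-crossing of $i,j$ to merge components\dots''), and you explicitly flag that making it precise is the crux. You also correctly note that the genus range is not a function of the interlacement graph alone ($121323$ vs.\ $123123$), which means the completeness claim cannot follow from any argument that only sees which chords cross; any honest proof must engage with the actual ribbon-graph states, and no such argument is supplied. As it stands the proposal reduces the conjecture to a smaller but still open statement.

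Two secondary points to repair even if the crux were settled. First, ``stripping isolated chords'' is not the inverse of loop nesting: loop nesting (Definition~\ref{loopnestdef}) removes a chord whose two occurrences are \emph{cyclically adjacent}, whereas an isolated chord (interlaced with nothing) may enclose a nonempty sub-DOW; to strip down to a core and then legitimately ``reassemble'' $w$ as a loop nesting of that core, you must strip innermost-first and argue that every isolated chord of a word with genus range $[0,1]$ really does sit in a nested configuration over the core --- this needs its own (short) argument. Second, the additivity you invoke for a reducible word $w_0=uv$ is not Lemma~\ref{joinlem} (which concerns the cross sum through an extra figure-eight vertex); the concatenation version is true, with $b=b_1+b_2-2$ in the ribbon-graph count, but it should be stated and checked rather than cited.
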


\begin{figure}[ht]
\begin{minipage}[b]{0.3\linewidth}
\centering
\includegraphics[scale=.29]{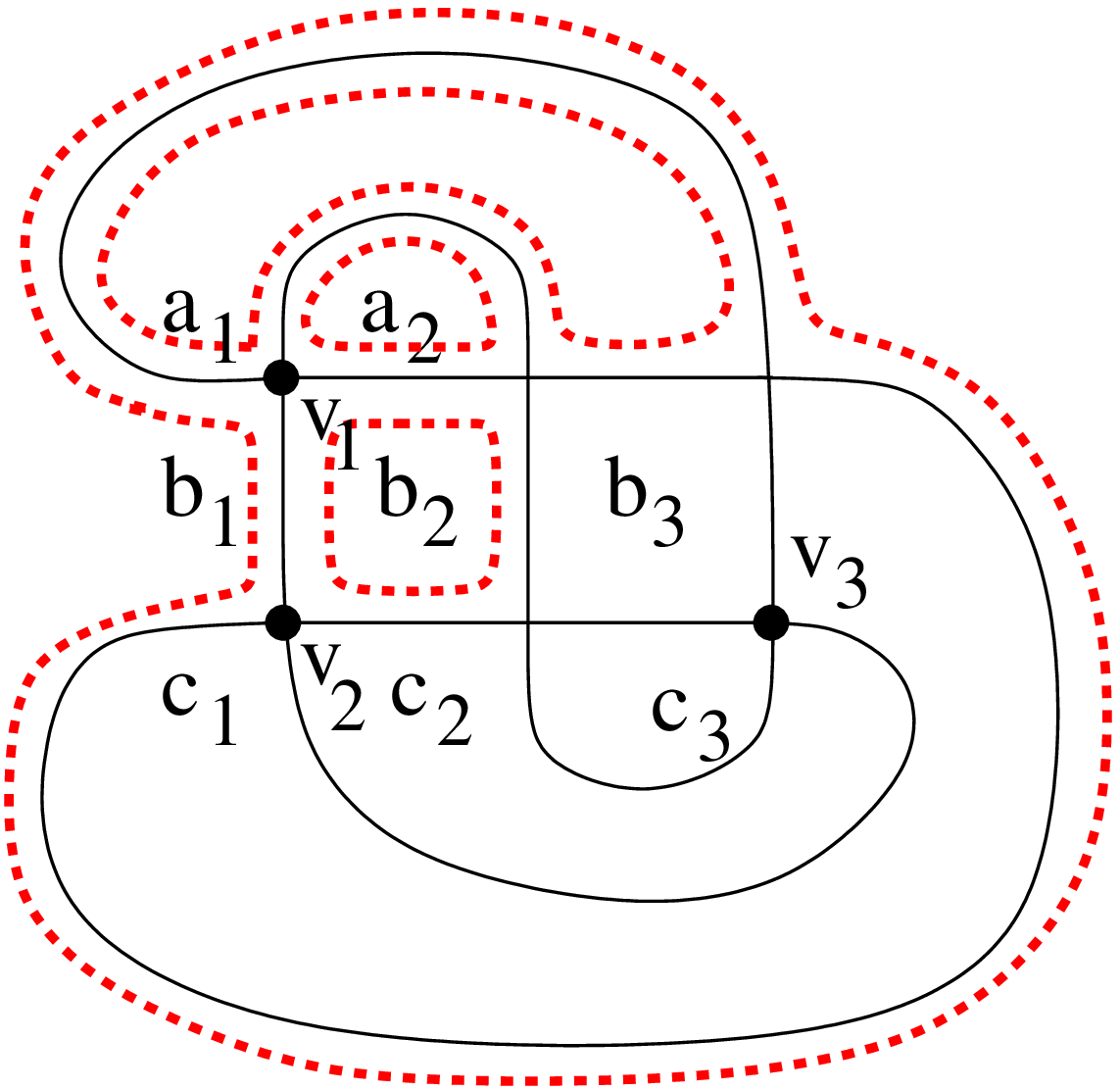}
\caption{Proof for $P_3$}
\label{n6step1}
\end{minipage}
\begin{minipage}[b]{0.3\linewidth}
\centering
\includegraphics[scale=.2]{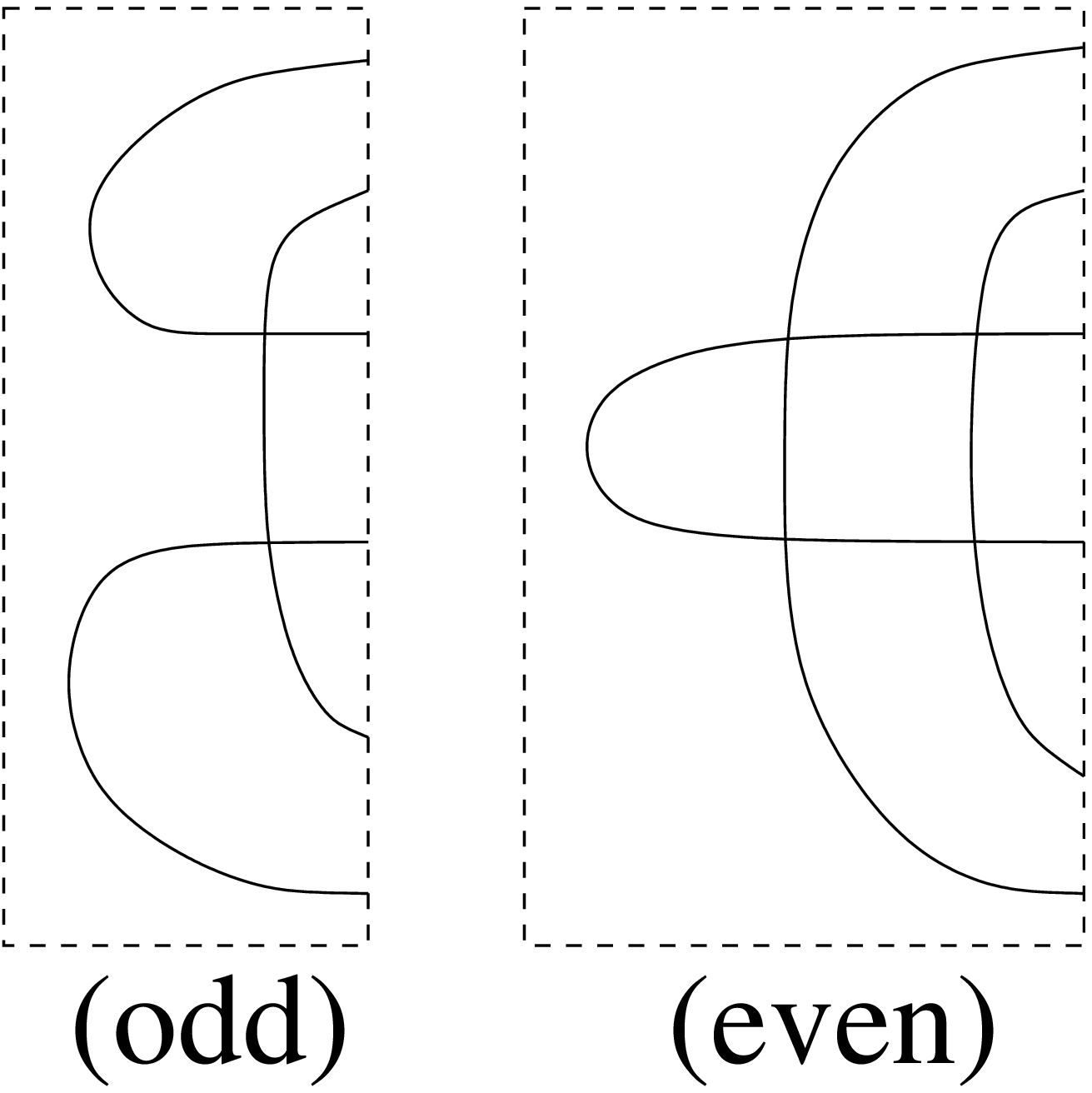}
\caption{Box  (X) }
\label{leftbox}
\end{minipage}
\begin{minipage}[b]{0.3\linewidth}
\centering
\includegraphics[scale=.19]{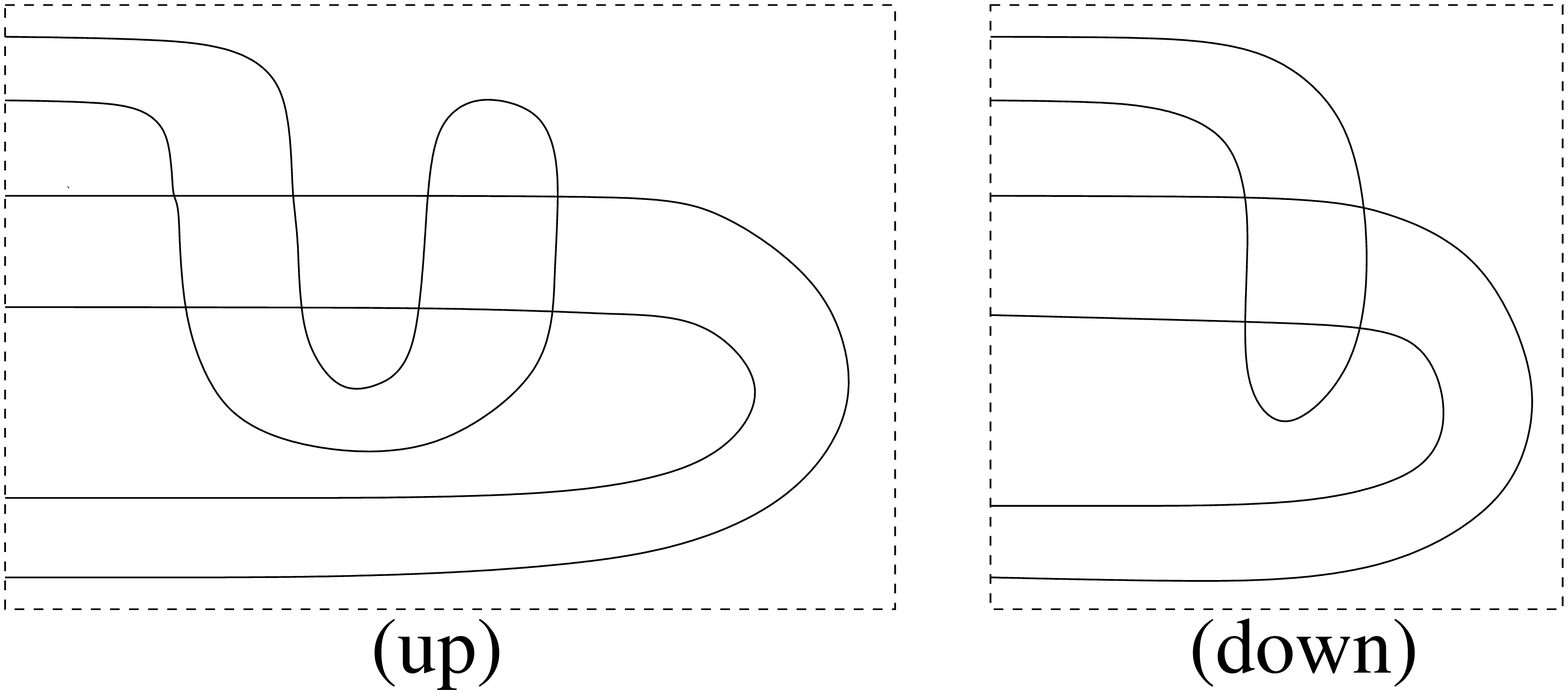}
\caption{Box  (Z) }
\label{rightbox}
\end{minipage}

\end{figure}

\begin{proposition}\label{halfgrprop}
For any  integer $n>1$, 
 there exists an 
assembly graph
$\Gamma$ of $2n$ vertices with $\gr(\Gamma)=\{0, 1, \ldots, n \}$.
\end{proposition}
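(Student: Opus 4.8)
The plan is to reduce the claim to exhibiting a single graph that simultaneously attains the two extreme genera $0$ and $n$, and then to invoke the general structure of genus ranges. Indeed, by Lemma~\ref{conseclem} the set $\gr(\Gamma)$ is always an interval of consecutive integers, and by Corollary~\ref{cplem} we have $\gr(\Gamma)\subseteq{\cal CP}(n)$, so no genus exceeding $n$ can occur for a graph with $2n$ vertices. Consequently, once I produce a graph $\Gamma$ with $2n$ vertices for which both $0\in\gr(\Gamma)$ and $n\in\gr(\Gamma)$, it follows immediately that $\gr(\Gamma)=\{0,1,\ldots,n\}=[0,n]$. Thus the whole problem splits into two tasks on one and the same $2n$-vertex graph: realizing genus $0$ and realizing genus $n$.

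For the construction I would build $\Gamma$ explicitly as a planar $4$-valent graph assembled from $n$ elementary boxes, generalizing the $6$-vertex example $123451256346$ of Remark~\ref{rem-full-range} (the case $2n=6$). Concretely, I would nest or chain the blocks of Figures~\ref{leftbox} and \ref{rightbox} along the template of Figure~\ref{n6step1}, each block contributing two vertices so that $|\Gamma|=2n$. Since the diagram is drawn in the plane, $\Gamma$ is planar and hence $0\in\gr(\Gamma)$; equivalently, the plane-neighborhood ribbon graph has $b=2n+2$ complementary regions and genus $0$ by Lemma~\ref{Eulerlem}. To reach the maximal genus, recall from the proof of Lemma~\ref{planarlem} that for a planar graph every ribbon graph carries a black/white coloring of its boundary components which is preserved under connection changes; hence $b\geq 2$ always, and by Lemma~\ref{Eulerlem} genus $n$ is attained precisely when some ribbon graph has exactly $b=2$ (one black component, one white). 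I would therefore prescribe a specific connection change at each box-vertex and show that, starting from the planar ribbon graph, these changes merge all black regions into a single component and all white regions into a single component.

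The verification that this prescribed family of connection changes produces exactly $b=2$ is the heart of the argument and the step I expect to be the main obstacle. I would organize it as an induction on the number of boxes: assuming the partially assembled graph has a ribbon graph with two boundary components tracing its ``free'' edges in a controlled way, I would check that attaching the next box (two further vertices) together with the chosen connections again yields exactly two components, tracking how each change from Figure~\ref{strand-connection} alters the boundary count by $0$ or $2$. The delicate point is the bookkeeping at the junctions between consecutive boxes: one must ensure that the local merges join the correct same-colored curves rather than stranding a spurious extra loop, since it is precisely this kind of obstruction that makes $b=2$ impossible in the odd case (Lemma~\ref{fullrangelem}). Maintaining this invariant through the final box yields a ribbon graph of $\Gamma$ with $b=2$, hence genus $n$.

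Finally, combining $0\in\gr(\Gamma)$ and $n\in\gr(\Gamma)$ with the interval property of Lemma~\ref{conseclem} and the upper bound $\gr(\Gamma)\subseteq{\cal CP}(n)$ of Corollary~\ref{cplem} forces $\gr(\Gamma)=\{0,1,\ldots,n\}$, as desired.
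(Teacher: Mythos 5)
Your reduction is sound: by Lemma~\ref{conseclem} and Corollary~\ref{cplem} it does suffice to exhibit a single $2n$-vertex graph attaining both genus $0$ and genus $n$, and your translation of ``genus $n$'' into ``some ribbon graph has exactly $b=2$ boundary components'' via Lemma~\ref{Eulerlem} is exactly the right target. This matches the paper's strategy, which likewise starts from the planar thin-neighborhood ribbon graph (genus $0$) and drives $b$ down to $2$ by connection changes, each dropping $b$ by $2$ and raising the genus by $1$.

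The gap is that your proposal stops at precisely the point where the content of the proposition lies. You do not actually specify the graph: ``nest or chain the blocks, each contributing two vertices'' does not determine a DOW or a diagram, and not every planar construction works --- a loop-nested graph with $2n$ vertices is planar yet has genus range $\{0\}$ (Remark~\ref{gr-0-lem}), and chaining copies of $1212$ by cross sums yields the constant range $\{n\}$ by Lemma~\ref{joinlem}. So the existence of a planar $2n$-vertex graph admitting a ribbon graph with $b=2$ is exactly the assertion to be proved, and you explicitly defer both the construction and its verification (``the step I expect to be the main obstacle''). The paper supplies this missing core: an explicit family $P_n$ assembled from three boxes (X), (Y), (Z) whose contents depend on $n \bmod 4$, an explicit list of vertices at which the connection is switched from type (A) to type (C) of Figure~\ref{flip}, and an induction along the middle box showing --- via a checkerboard shading --- that at each stage the four regions about to be merged are bounded by distinct curves, so that each switch realizes case (f) of Figure~\ref{strand-connection} and genuinely decreases $b$ by $2$ rather than leaving it unchanged. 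Without that explicit graph and that invariant, your argument establishes only the (correct) framework, not the result.
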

\begin{proof}
Computer 
calculations show that there is a unique graph of $4$ vertices corresponding to 
$12314324$ that has genus range $\{0,1,2\}$ as mentioned in Remark~\ref{rem-full-range}.   
In Figure  \ref{n6step1}, an assembly graph $P_3$  is depicted that has the genus range $\{0, 1, 2, 3\}$.
To show this, we start with the planar diagram as depicted. 
We consider the ribbon graph that is a thin neighborhood of this planar diagram.
For this ribbon graph, the boundary connection at every vertex appears as type (A) in Figure \ref{flip}. 
The genus of this  ribbon graph is $0$.
Each region of the diagram corresponds 
to a boundary curve of the corresponding ribbon graph.
 Four of such boundary curves (out of total $8$) are depicted by dotted lines 
in the figure, for the regions labeled by $a_1$, $a_2$, $b_1$, and $b_2$.

We change the boundary connection at the vertex labeled by $v_1$
from type (A) in Figure~\ref{flip} to type (C), where the connection is  
as indicated in Figure  \ref{strand-connection}(f).  
Then as Figure  \ref{strand-connection}(f) 
 indicates, the curves  $a_1$ and $b_2$, $a_2$ and $b_1$, are connected, 
respectively, reducing the number of boundary components 
by $2$. 
This causes the increase of the genus of the new ribbon graph by $1$
by Lemma \ref{Eulerlem}, so that the new surface has
genus $1$. 
After the connection change at $v_1$, the boundary connection at $v_2$ remains 
as in  Figure  \ref{strand-connection}(f), since the regions $b_1$, $b_2$, $c_1$, $c_2$ 
are bounded by distinct curves. 
Thus by repeating this process with vertices $v_2$ and $v_3$, 
we obtain ribbon graphs of genus $2$ and $3$, respectively. The number of boundary curves after performing the changes at $v_1$, $v_2$ and $v_3$ is $2$. 
The two components can be seen as a checkerboard coloring of the regions,
and the two boundary curves near every edge belong to distinct components  (cf. Lemma \ref{planarlem}). 
This shows that the genus range for this graph is $\{0,1,2,3\}$.

\begin{figure}[htb]
    \begin{center}
   \includegraphics[width=3in]{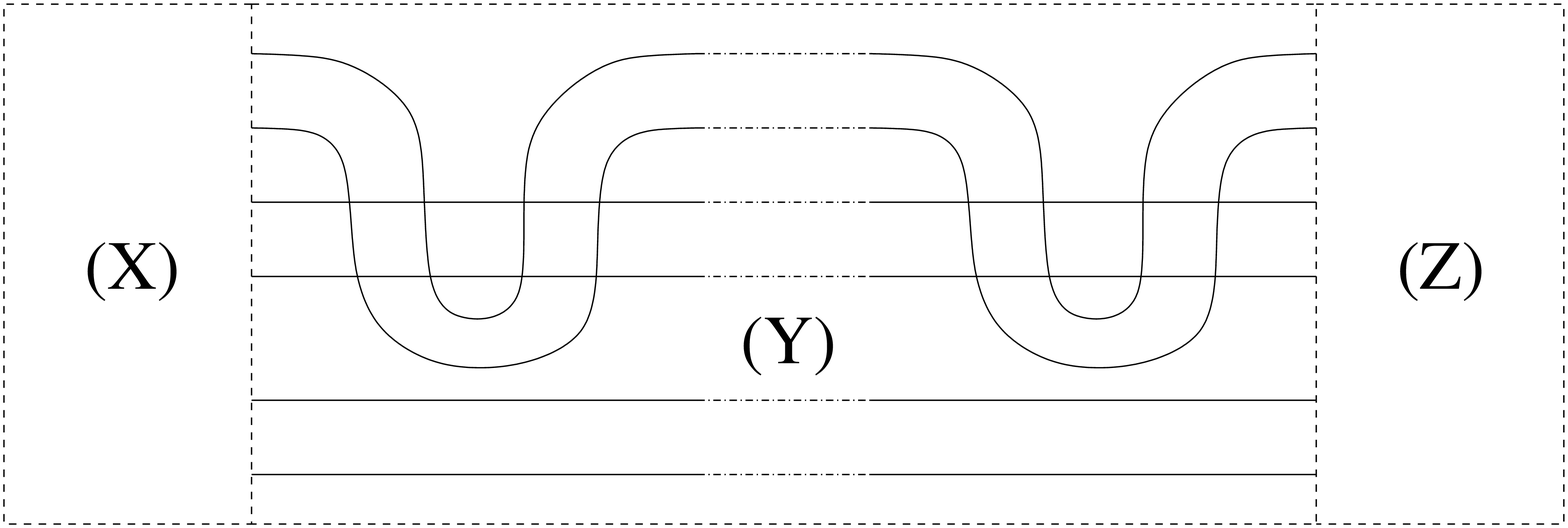}\\
    \caption{Inductive construction of $P_n$ }\label{snake}
    \end{center}
\end{figure}

Next we describe the assembly graph $P_n$ for $n>3$
as the 
 combination of three subgraphs 
 indicated in Figure \ref{snake} as follows.
If $n$ is odd, then the subgraph in the box (X) in  Figure \ref{snake}
is as  depicted in Figure \ref{leftbox}(odd), and if $n$ is even,
 it is as depicted in Figure \ref{leftbox}(even), respectively. 
The middle part in the box (Y) is as  depicted in  Figure \ref{snake}.
The subgraph in box (Z) in   Figure \ref{snake} is either (down) or (up) of 
  Figure \ref{rightbox}, and the choice is determined by the number of vertices
  $|P_n|=2n$. Specifically, for integers $k$,  $P_n$ has the following patterns in boxes (X) and (Z) in    Figure \ref{snake}, respectively:
If $n=4k$, $4k+1$, $4k+2$, $4k+3$, respectively, the box (X) and (Z) are 
filled with (even) and  (down), (odd) and (up), (even) and (up), (odd) and (down), 
respectively. Examples for $n=6$ (even, up) and $7$ (odd, down) are depicted 
in Figure \ref{n8} and \ref{n14}, respectively.

\begin{figure}[ht]
\begin{minipage}[b]{0.5\linewidth}
\centering
\includegraphics[scale=.25]{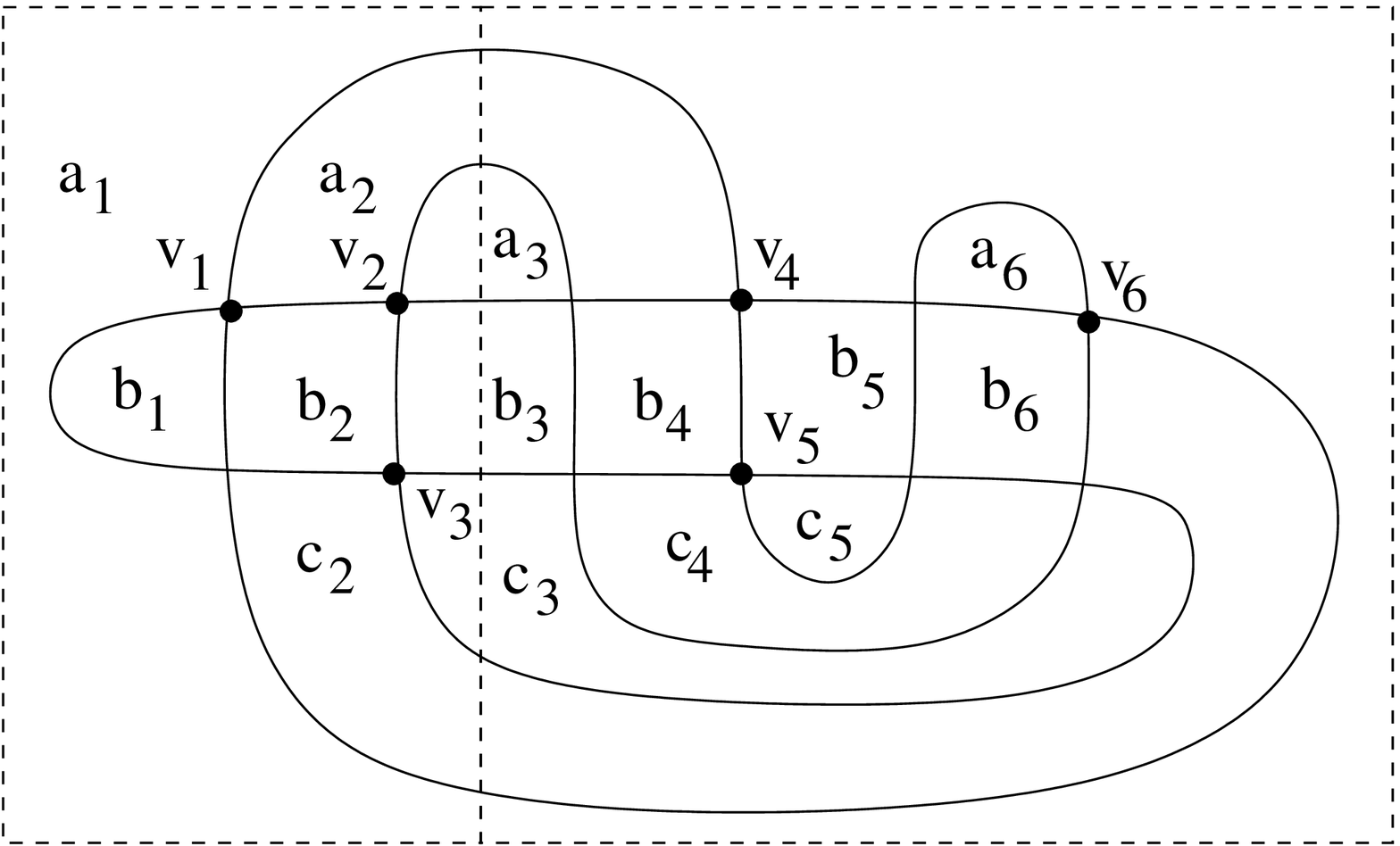}
\caption{$P_6$}
\label{n8}
\end{minipage}
\begin{minipage}[b]{0.5\linewidth}
\centering
\includegraphics[scale=.28]{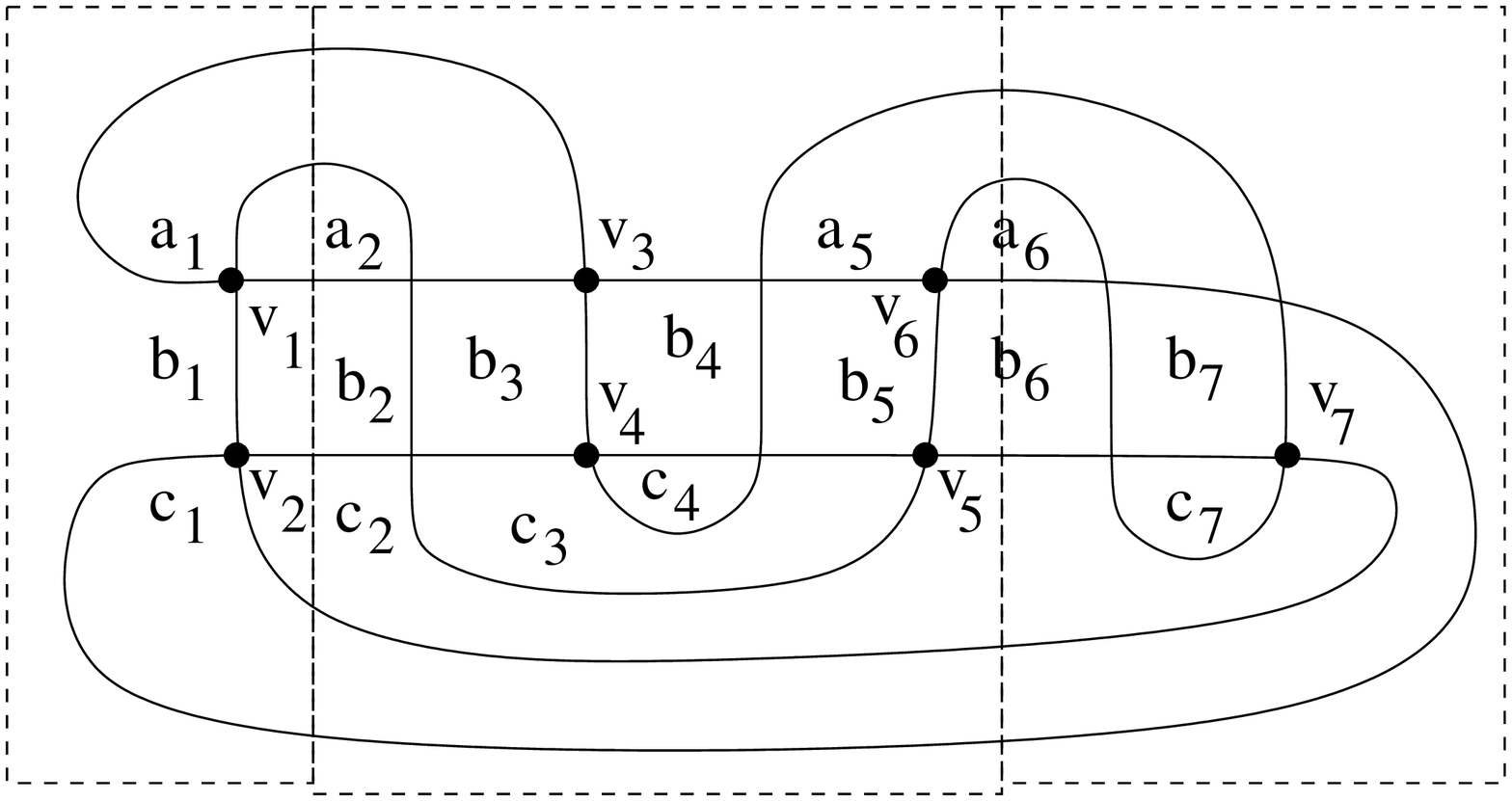}
\caption{$P_7$}
\label{n14}
\end{minipage}
\end{figure}

We generalize the argument for $P_3$ to $P_n$ inductively.
We start with the ribbon graph that is a thin neighborhood of the planar diagram as
described in Figure \ref{snake}, where the boundary connection at every vertex is
of type (A) in Figure \ref{flip}. The connection of boundary curves is as Figure \ref{strand-connection}(f) at every vertex.
We successively change boundary connections 
from type (A) to type (C) at a subset of  vertices we describe below.
With every such change, the number of boundary components of the corresponding ribbon graph reduces by $2$, and hence the genus increases by $1$. 
We start by changing the boundary connections for the vertices in the subgraph in 
box (X).
For  $n$ odd,  we change the boundary connection at 
vertices $v_1$ and $v_2$ in Figure \ref{leftshade}(odd), 
and for $n$ even,  at 
vertices $v_1$, $v_2$ and $v_3$ in Figure \ref{leftshade}(even). 
The checkerboard coloring in Figure \ref{leftshade} indicates that the boundary curves of the 
regions of the same shading are connected.

\begin{figure}[ht]
\begin{minipage}[b]{0.26\linewidth}
\centering
\includegraphics[scale=.28]{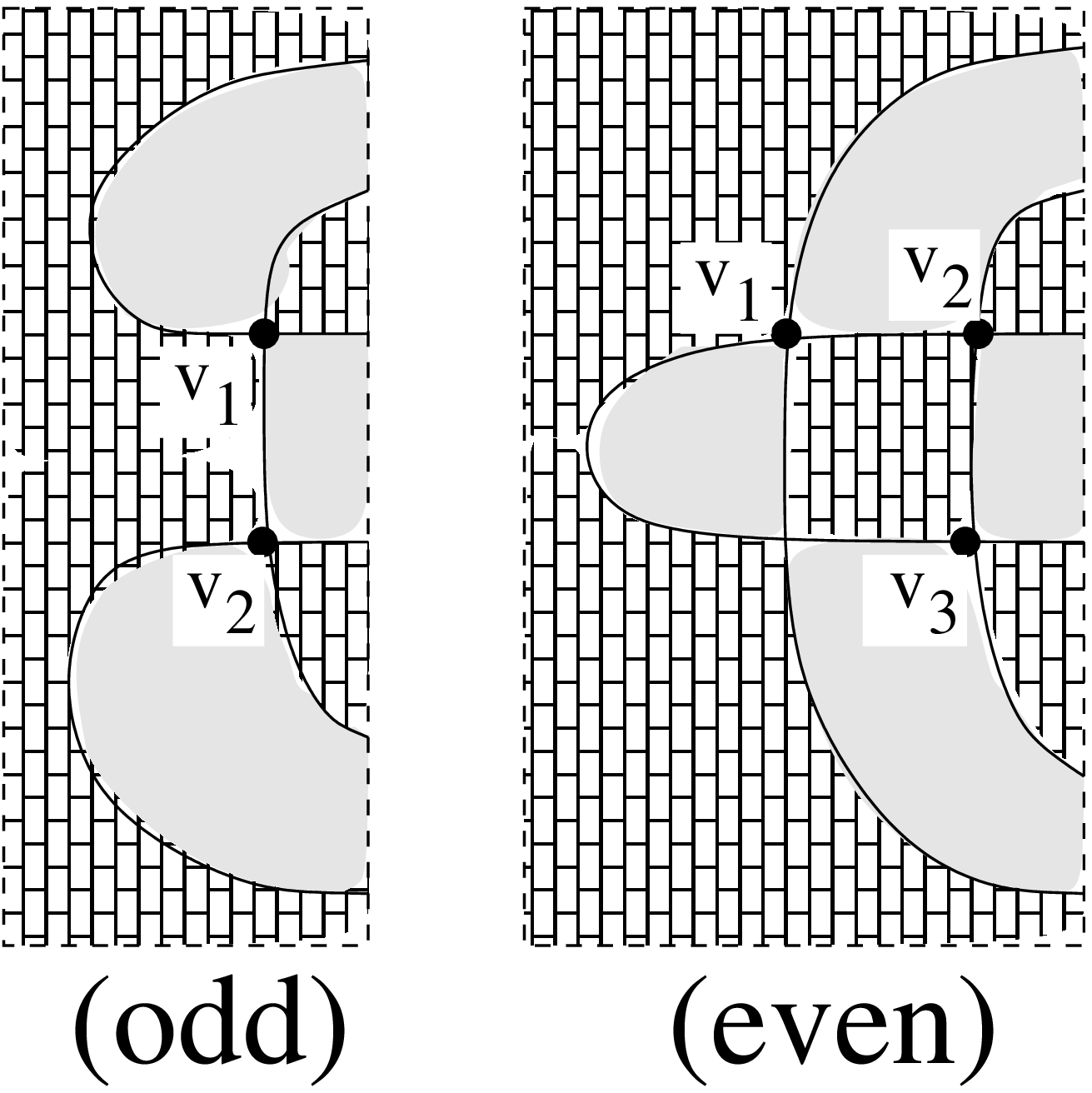}
\caption{Shading  (X) }
\label{leftshade}
\end{minipage}
\begin{minipage}[b]{0.24\linewidth}
\centering
\includegraphics[scale=.28]{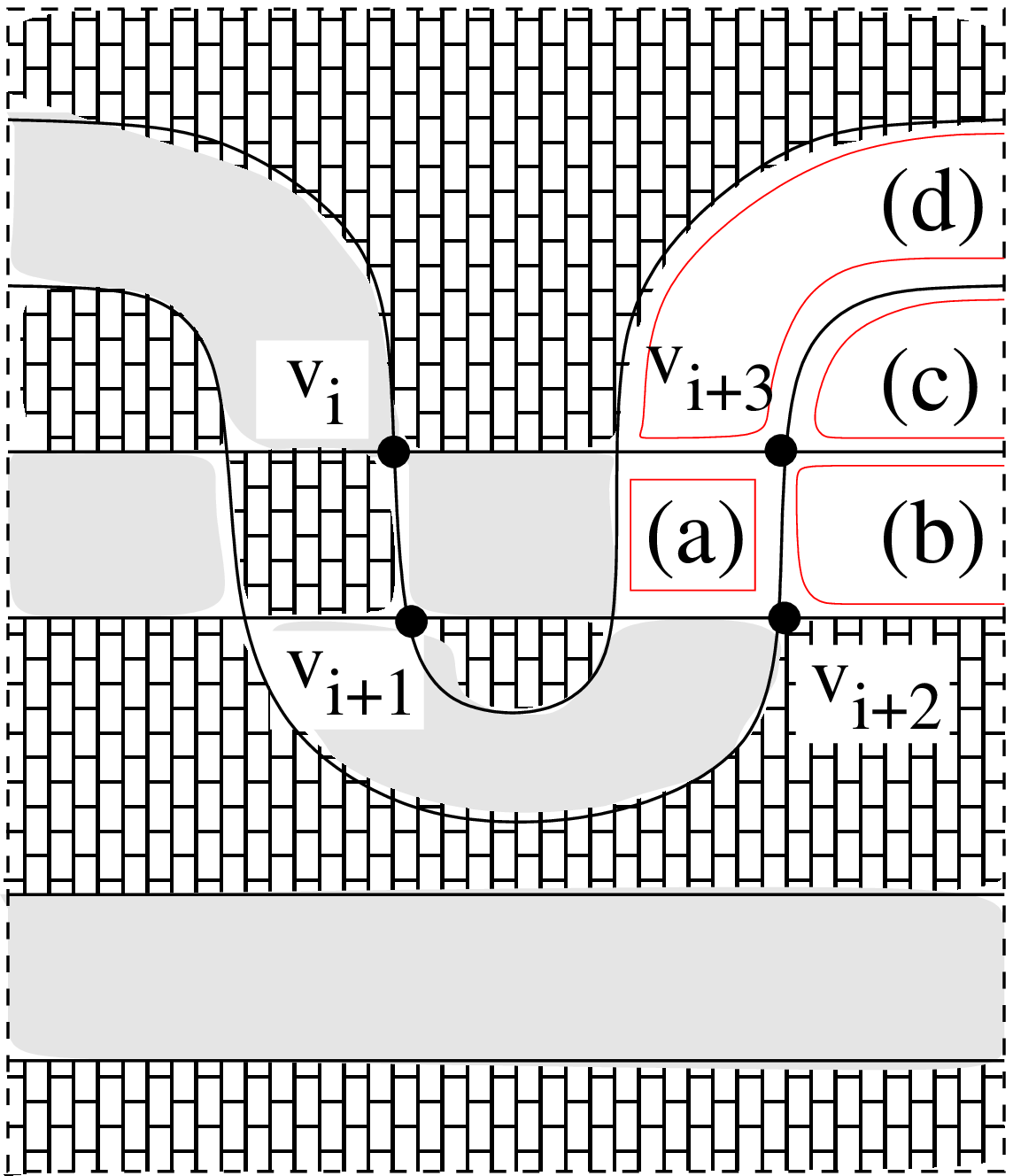}
\caption{Shading  (Y) }
\label{middle}
\end{minipage}
\begin{minipage}[b]{0.35\linewidth}
\centering
{\includegraphics[scale=.25]{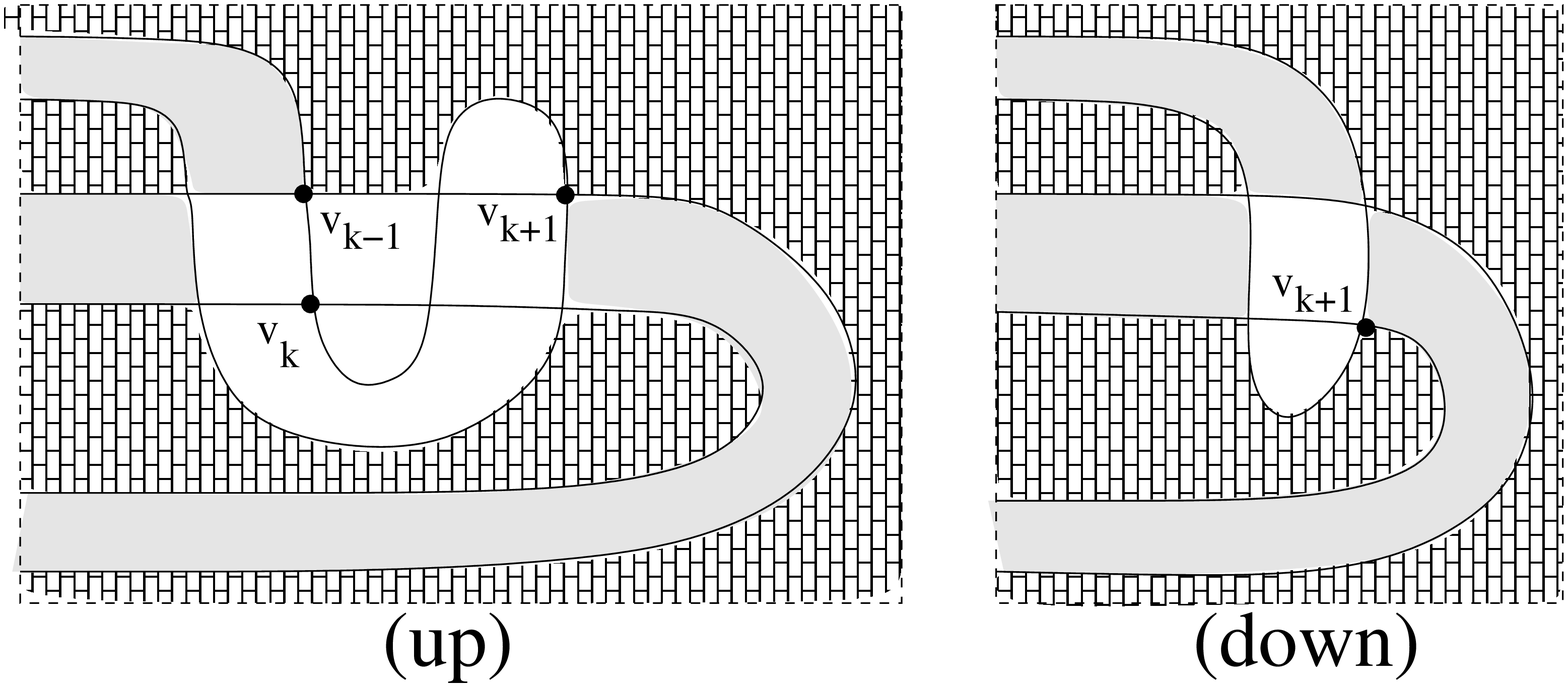}
\caption{Shading (Z)}}
\label{induction16}
\end{minipage}
\end{figure}

Next we proceed to box (Y) inductively. In Figure \ref{middle}, after the changes of the boundary connections at vertices $v_i$ and $v_{i+1}$  in this order,
the boundary curves are connected 
for the curves of the regions of the same shading in Figure \ref{middle}.
Note that the boundary curves of the four regions (a) through (d) in the figure 
belong to distinct components. Hence the connection changes at the vertices 
$v_{i+2}$ and $v_{i+3}$ in this order 
connects the curves (a) and (c), (b) and (d) to those two curves 
corresponding to the two shadings, respectively. 
Inductively, by changing boundary connections in all thickened 
vertices in Figure \ref{snake} box (Y), all curves bounding the regions in box (Y) are connected to two boundary components. The two components are indicated by the checkerboard shading.

We complete the induction in box (Z) as indicated in Figure \ref{induction16}.
The last sequences of vertices where the  boundary connections
are changed  from type (A) to (C) are 
depicted in the figure for each case (up) and (down) of Figure \ref{rightbox}. 
 \end{proof}

\begin{proposition}\label{oddhalfgrprop}
For any  integer $n>1$,
 there exists an 
assembly graph
$\Gamma$ of $2n+1$ vertices with $\gr(\Gamma)=\{0, 1, \ldots, n \}$.
\end{proposition}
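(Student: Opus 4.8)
The plan is to reduce this odd-vertex statement to the even-vertex case that was just established in Proposition~\ref{halfgrprop}, exploiting the fact that appending a single loop raises the vertex count by exactly one while leaving the genus range untouched. Since $2n+1$ is odd and $2n$ is even, and since Proposition~\ref{halfgrprop} already supplies a graph $P_n$ with $2n$ vertices and genus range $\{0,1,\ldots,n\}$, the natural move is to attach one extra loop to $P_n$ and invoke the loop lemma.

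Concretely, I would proceed as follows. By Proposition~\ref{halfgrprop}, for $n>1$ there is an assembly graph $P_n$ with $|P_n|=2n$ and $\gr(P_n)=\{0,1,\ldots,n\}$. Let $w$ be a DOW representing $P_n$ (so $w$ has $2n$ distinct letters, each occurring twice), and let $a$ be a letter not appearing in $w$. Set $w'=w\,a\,a$ and let $\Gamma=\Gamma_{w'}$. Then $\Gamma$ is obtained from $P_n$ by adding a single loop at a new vertex labeled $a$, so $|\Gamma|=2n+1$. By Lemma~\ref{looplem} (equivalently Corollary~\ref{loopcor}, or Corollary~\ref{higherlem} applied with a single added loop), the genus range is unchanged, giving $\gr(\Gamma)=\gr(P_n)=\{0,1,\ldots,n\}$, as required. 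The hypothesis $n>1$ is inherited directly from Proposition~\ref{halfgrprop}.

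The only points that need verification are bookkeeping rather than substance: that attaching $a\,a$ increases the size from the even value $2n$ to the odd value $2n+1$, and that Lemma~\ref{looplem} preserves the \emph{entire} genus range as a set, so in particular both endpoints $0$ and $n$ survive the operation. The first is immediate from the construction of $\Gamma_{w'}$, and the second is exactly the content of Lemma~\ref{looplem}, whose proof shows that each of the two boundary connections at the loop vertex $a$ raises the boundary-component count by one and hence leaves the genus of every ribbon graph fixed. Consequently there is no genuine obstacle here; the proposition is essentially a corollary of the even-vertex construction combined with the loop lemma, and the argument is a one-step reduction rather than a fresh construction.
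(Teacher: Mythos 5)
Your proof is correct and matches the paper's own argument, which simply cites Proposition~\ref{halfgrprop} together with Corollary~\ref{higherlem} (the loop-appending lemma with $k=1$). The one-loop reduction and the appeal to Lemma~\ref{looplem} are exactly the intended route.
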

\begin{proof}
This follows from Proposition \ref{halfgrprop} and Lemma \ref{higherlem}.
\end{proof}

\section{Genus Range of the Tangled Cord}\label{GenusRangeTC}

In this Section we find the genus range of a special family 
of assembly graphs
called the tangled cords. In view of 
Problem \ref{MainProblem}, 
we prove that for all $n$ the maximum 
genus range (at least for odd $n$) 
$\{ \lfloor \frac{n-1}{2}\rfloor , \lfloor \frac{n+1}{2}\rfloor\}$ is achieved by the tangled cord with $n$ vertices.
The analysis of the genus range in this case uses a technique of  {\it addition}  and {\it removal} of a vertex.

\subsection{Addition and Removal of a Vertex}

Let $\Gamma$ be an assembly graph and $e, e'$ be two edges in $\Gamma$ with endpoints $v_1,v_2$ and 
$v_1',v_2'$ respectively. 
We consider the graph $\Gamma_{\rm split}(e, e')$ obtained from $\Gamma$ by adding $2$-valent vertices 
$v$ and $v'$ on $e$ and $e'$, respectively, splitting $e$ and $e'$ into two edges $e_1, e_2$ and $e_1', e_2'$
as depicted in Figure~\ref{add-remove}(middle).
The new edges $e_1, e_2, e_1', e_2'$ have end vertices $\{v_1,v\},\{v_2,v\}, \{v_1',v'\}, \{ v_2', v'\}$, 
respectively. 
We say that $\Gamma'$ is obtained from $\Gamma$ by {\it addition of a vertex
by crossing $e,e'$ in cyclic order 
 $e_1,  e_1',e_2, e_2'$} (or simply {\it by addition of a vertex} when context allows) if $V(\Gamma')=V(\Gamma)\cup\{w\}$ and the edges of $\Gamma'$ are obtained from the edges of $\Gamma_{\rm split}(e,e')$ 
by identifying $v$ and $v'$ to a single vertex $w$ with cyclic order of edges $e_1, e_1', e_2,  e_2'$
as depicted in Figure~\ref{add-remove} from left to right. 
  The cyclic order of the edges at vertices $v_1,v_2,v_1',v_2'$ remains as in $\Gamma$ such that the roles of $e,e'$ are taken by the corresponding new edges. 
  If $\Gamma'$ is obtained from $\Gamma$ by crossing $e,e'$ in  cyclic order $e_1, e_1', e_2,  e_2'$,
 we write $\Gamma'=\Gamma(e_1, e_1', e_2,  e_2')$.
In this case we also say that {\it  $\Gamma$ is obtained from $\Gamma(e_1, e_1', e_2,  e_2')$ by vertex removal}.
Note that the vertices $v_1, v_2, v_1', v_2'$ need not to be distinct.

\begin{figure}[htb]
    \begin{center}
   \includegraphics[width=5.3in]{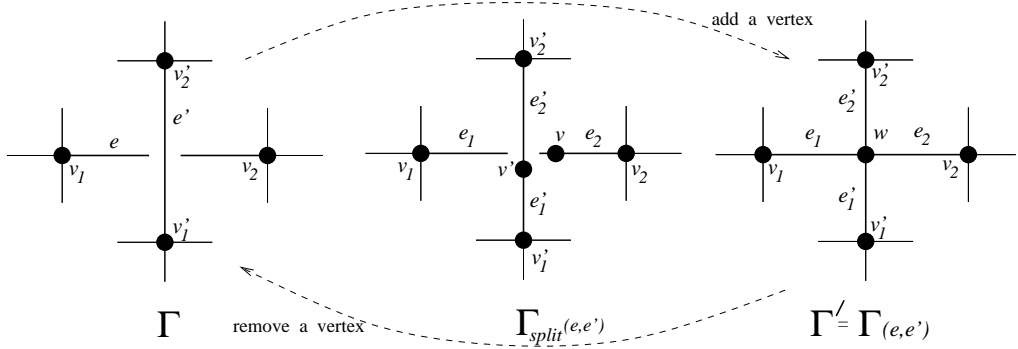}\\ 
    \caption{Addition and removal of a vertex }\label{add-remove}
    \end{center}
\end{figure}

Let $F$ be a  ribbon graph of $\Gamma$  and  let $\Gamma'$ be obtained from $\Gamma$ by addition of a vertex $w$.  A  ribbon graph $F'$ of $\Gamma'$ whose boundary connections at every vertex $v\not =w$ is the same as the 
the boundary connections of  $F$ at $v$ is denoted $F(w)$. Since there are two possible connections of the boundary components of $F(w)$ at the new vertex $w$, the notation indicates one of those two choices.

All possible changes in the global connections of boundary components of $F(w)$ at the new vertex $w$ relative 
to %
the boundary components of  the crossing edges that add the new vertex $w$ are depicted in Figure  \ref{add-vertex-strands}.
Note that the bottom row coincides with the top row of Figure  \ref{strand-connection}.
\begin{figure}[htb]
    \begin{center}
   \includegraphics[width=6in]{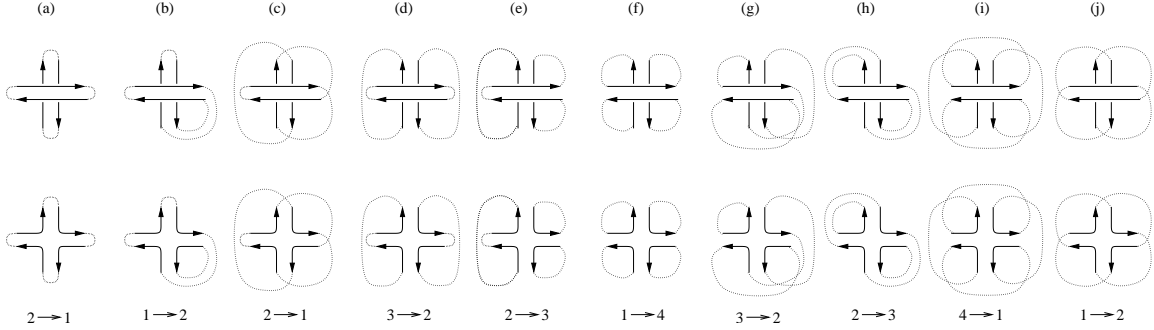}\\
    \caption{Possibilities of strand connections }\label{add-vertex-strands}
    \end{center}
\end{figure}

Recall that 
the boundary component $\delta$ of a ribbon graph  $\Gamma$ \textit{traces} the edge $e$ of $\Gamma$ if 
the boundary of the ribbon that contains $e$ is a portion of $\delta$. There are at most two boundary components that can trace an edge, so there are at most four boundary components of a  ribbon graph that trace two edges from $\Gamma$. 

\begin{lemma}\label{lem:add-vertex}
Let $\Gamma,\Gamma'$ be two assembly graphs such that $\Gamma'=\Gamma(e_1,e_1',e_2,e_2')$ by crossing edges $e,e'$ and addition of  a new vertex $w$. Suppose $F$ and $F(w)$ are  ribbon graphs for $\Gamma,\Gamma'$ as defined above with $b,b'$ being the number of boundary components for $F$ and $F(w)$, respectively. Then we have the following:
\begin{itemize}
\setlength{\itemsep}{-3pt}
\item[\rm (i)] If both $e$ and $e'$ are traced by only  one boundary component in $F$,  then $b'=b+1$ or $b'=b+3$.
\item[\rm (ii)] If  both $e$ and $e'$ are traced by two boundary components  in $F$, then $b'=b+1$ or $b'=b-1$.
\item[\rm (iii)] If  $e$ and $e'$ are traced by three boundary components in $F$, then $b'=b-1$.
\item[\rm (iv)] If $e$ and $e'$  are traced by four boundary components in $F$, then  $b'=b-3$.
\end{itemize}
\end{lemma}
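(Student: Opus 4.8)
The plan is to localize the entire computation to a small disk $D$ containing the crossing that creates $w$, since $F$ and $F(w)$ agree outside $D$; inside $D$ the only boundary arcs present are the (at most four) sides of the ribbons carrying $e$ and $e'$. First I would record the elementary bookkeeping that drives everything: a boundary component of $F$ meets $D$ if and only if it traces $e$ or $e'$, so the number of components of $F$ meeting $D$ is exactly the number $T$ of distinct boundary components tracing $e,e'$ — the quantity that separates the four cases. Because the components of $F$ and $F(w)$ disjoint from $D$ coincide, writing $T'$ for the number of components of $F(w)$ meeting $D$ yields $b'-b = T'-T$. Thus the statement reduces to computing $T'$ from the way the four sides are grouped into boundary components.

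For this I would make the local model explicit. Cutting along $\partial D$ exhibits the two sides of $e$ and the two sides of $e'$ as four arcs meeting $\partial D$ in eight points; in $F$ these arcs run straight through ($e_1$ to $e_2$, $e_1'$ to $e_2'$), while in $F(w)$ they are reconnected according to the rigid vertex $w$, in one of the two ways related by a connection change (Figure~\ref{flip}(C)). The orientation of $F$ splits the eight endpoints into four incoming and four outgoing ends, so the external identifications form a bijection between them, and each boundary component is a cycle obtained by alternately following an internal arc and an external identification. Both $T$ and $T'$ are then cycle counts of these pairings, so $b'-b$ is determined entirely by how the four sides are grouped into boundary components of $F$ — exactly the data recorded in the hypothesis. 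This is the enumeration displayed in Figure~\ref{add-vertex-strands}, whose bottom row matches the connection types of Figure~\ref{strand-connection}.

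I would then carry out the case analysis $T=1,2,3,4$, in each case tracing the recombination of the four sides for both choices of connection at $w$. Two structural facts keep the tracing honest and narrow the possibilities: by Lemma~\ref{Eulerlem} the parity of $b'-b$ is forced to be odd (since $\Gamma'$ has one more vertex than $\Gamma$, so $b$ and $b'$ have opposite parity), and the two choices of connection at $w$ differ by a single connection change, so by the argument of Lemma~\ref{conseclem} their values of $T'$ differ by $0$ or $2$. Running the four cases then gives $b'-b\in\{+1,+3\}$ for $T=1$ and $\{+1,-1\}$ for $T=2$, while for $T=3$ and $T=4$ both connections produce the same odd value, namely $-1$ and $-3$, which is precisely (iii) and (iv).

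The step I expect to be the main obstacle is the faithfulness of this reduction rather than any single cycle count. I would need to verify that, up to the vertex symmetries already used in the discussion of Figure~\ref{perm-connection} together with the orientation constraint, the four listed cases exhaust all \emph{realizable} local groupings of the four sides, and that $T'$ genuinely depends only on $T$ and not on finer data such as how $e$ and $e'$ are individually traced. The delicate point is the orientation bookkeeping: one must check that no orientation-inconsistent pairing of the eight endpoints is admitted, since such a spurious pairing would produce an even value of $b'-b$ and corrupt the case count. Once the realizable configurations are correctly listed, reading off $T'$ for each connection is routine.
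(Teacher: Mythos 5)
Your proposal is correct and follows essentially the same route as the paper: both reduce to a local case analysis of how the (at most four) boundary arcs along $e$ and $e'$ are grouped into components, and then read off, for each of the two connection choices at $w$, the resulting recombination as enumerated in Figures~\ref{strand-connection} and~\ref{add-vertex-strands}. The extra scaffolding you add (the identity $b'-b=T'-T$ from localizing to a disk, the parity constraint from Lemma~\ref{Eulerlem}, and the $0$-or-$2$ difference between the two connections at $w$) is sound and only sharpens the same figure-based enumeration the paper relies on.
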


\begin{proof}
The proof follows directly from the observations shown in 
Figures  \ref{strand-connection} and  \ref{add-vertex-strands}.

(i) If both edges $e,e'$ are traced by a single component in $F$, then by crossing $e,e'$ the boundary components in $F(w)$ follow the situation (b), (f), and (j) in Figsure \ref{add-vertex-strands}. Situations (b) and (j) increase the number of components by 1 and (f) increase the number of components by 3. If the boundary connections at $w$ are
 changed (see Figure  \ref{strand-connection}) then in case (b) the number of components remains the same, in case (f) the number reduces from 4 to 2 and in case (j) the number increases from 2 to 4. Since the connections of the boundary components at all vertices in $F$ are the same as those in $F(w)$, the Lemma follows. 
 
 Cases (ii), (iii) and (iv) follow similarly.
 \end{proof}

\subsection{Genus Range of the Tangled Cord $T_n$}\label{TCIntro}

A {\it tangled cord} {\cite{BDJMS}}
with $n$ vertices and $2n$ edges, denoted $T_n$,  is a special type of assembly graph of the form illustrated in Figure  \ref{TC}.
The graph $T_n$ corresponds to the DOW $$ 1213243 \cdots (n-1)(n-2)n(n-1)n . $$
Specifically, $T_1, T_2$ and $T_3$correspond to $11$, $1212$, and $121323$ respectively,
and the DOW of 
$T_{n+1}$ is obtained from the DOW of 
  $T_{n}$ by replacing the last letter $(n)$ by
the subword $(n+1)n(n+1)$. Figure \ref{TC} shows the structure of the tangled cord.
For vertices and edges of $T_n$, we establish the following notation for the rest of the section.
The edges of $T_n$ are enumerated as $e_1, \ldots, e_{2n}$ as they are encountered by the transverse path given by the DOW, in this order. 
The adjacent edges to each vertex are listed below, in the cyclic order around the vertex as follows. 
\[\begin{tabular}{l}
$v_1:\ e_1,e_3, e_{2n}, e_2$ \\
$v_2:\ e_1,e_5, e_2, e_4$ \\
$v_i:\ e_{2(i-1)}, e_{2i}, e_{2(i-2)+1}, e_{2i+1}\mbox{ for }i\neq 1,2,n-1,n$\\
$v_{n-1}:\ e_{2(n-2)}, e_{2n-2}, e_{2(n-3)+1}, e_{2n-1}$\\
$v_n:\ e_{2n}, e_{2n-2}, e_{2n-1}, e_{2n-3}$
\end{tabular}\]
By construction, $T_{n+1}$ is obtained from $T_n$ by addition of a vertex $v_{n+1}$ by crossing $e_{2n-1}$ and $e_{2n}$ 
 as in Figure \ref{TC}, with the cyclic order  $e_{2n+1}, e_{2n-1},  e_{2n}, e_{2n-2}$ at $v_{n+1}$. 

\begin{figure}[h]
\begin{center}
\includegraphics[width=8cm]{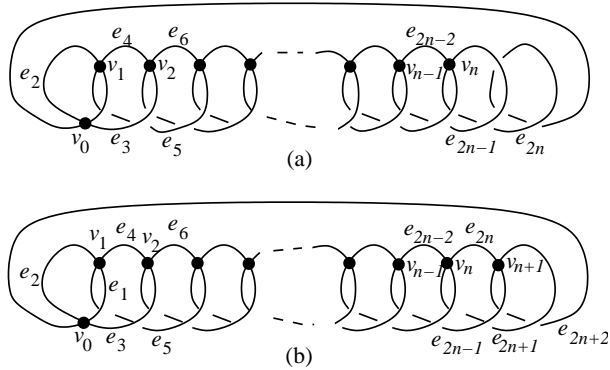} 
\caption{Tangled cord $T_n$}
\label{TC}
\end{center}
\end{figure}

The tangled cord $T_n$ was introduced in \cite{BDJMS} as a graph that provided a tight upper bound on the number of 
Hamiltonian polygonal paths (paths in which consecutive edges are neighbors with respect to their common incidence vertex visiting every vertex exactly once) over all assembly graphs of the same number of vertices.

We prove that  for each $T_n$, if $F$ is a  ribbon graph for $T_n$ then it has either 1 or 3 boundary components 
(if $n$ is odd) or it has 2 or 4 boundary components (if $n$ is even). 
First we prove the following lemma:
\begin{lemma}\label{TC-bound}
Let $F$ be a  ribbon graph for $T_n$ ($n\ge 2$) and suppose $B$ is the
 set of boundary components that trace edges $e_{2n-1}$ and $e_{2n}$. 
 Then every edge of $T_n$ is traced by a curve in $B$. 
\end{lemma}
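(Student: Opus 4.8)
The plan is to reduce the statement to a single local fact at a rigid vertex and then propagate it along the spine $v_n,v_{n-1},\dots ,v_2$ of $T_n$. The local fact, which I will call the \emph{transversal-pair principle}, is: at a $4$-valent rigid vertex with incident edges $(a,b,c,d)$ in cyclic order, every boundary component of a ribbon graph $F$ that traces $b$ (or $d$) also traces $a$ or $c$; consequently, if a family $S$ of boundary components contains all components tracing $a$ and all tracing $c$, then $S$ contains all components tracing $b$ and all tracing $d$. To prove it I would look at the square assigned to the vertex: its boundary contributes four corner arcs, one between each pair of cyclically adjacent edges $a\&b$, $b\&c$, $c\&d$, $d\&a$, and these arcs lie on the boundary of $F$. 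The two sides of the band carrying $b$ run into the corners $a\&b$ and $b\&c$, so any component tracing $b$ passes through one of these corners and hence traces $a$ or $c$; the same holds for $d$. Crucially, these corner arcs are determined solely by the fixed cyclic order of the rigid vertex, so the principle holds for \emph{every} one of the $2^n$ ribbon graphs, not merely the planar one; this matches the convention that neighboring edges at a vertex bound a common complementary region.

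With the principle in hand I would read the two transversal pairs off the adjacency table of $T_n$. At the generic vertex $v_i$ ($3\le i\le n-1$) they are $\{e_{2i},e_{2i+1}\}$ and $\{e_{2i-2},e_{2i-3}\}$; at $v_n$ they are $\{e_{2n},e_{2n-1}\}$ and $\{e_{2n-2},e_{2n-3}\}$; and at $v_2$ they are $\{e_1,e_2\}$ and $\{e_4,e_5\}$. Since $B$ is the family of components tracing $e_{2n-1}$ or $e_{2n}$, it contains all components tracing each of $e_{2n-1},e_{2n}$; applying the principle at $v_n$ shows $B$ contains all components tracing $e_{2n-2}$ and $e_{2n-3}$ as well. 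I would then run a downward induction maintaining the invariant that $B$ contains every component tracing any edge in the contiguous block $\{e_{2i-1},\dots ,e_{2n}\}$: the pair $\{e_{2i},e_{2i+1}\}$ lies in this block, so the principle at $v_i$ enlarges the block to $\{e_{2i-3},\dots ,e_{2n}\}$. Iterating from $v_{n-1}$ down to $v_3$ gives the block $\{e_3,\dots ,e_{2n}\}$, and one last application at $v_2$, whose pair $\{e_4,e_5\}$ is already covered, adds $e_1$ and $e_2$. Thus $B$ contains a component tracing each edge, which is the claim.

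The step I expect to require the most care is the connection-independence of the transversal-pair principle: the tempting shortcut is to verify the corner structure only for the standard (planar) ribbon graph, but the lemma must hold simultaneously for all $2^n$ connections, so I must phrase the corner argument so that it never invokes a particular connection---only the rigid cyclic order. A secondary, purely bookkeeping point is that $v_n$ and $v_2$ are genuinely special rows of the table, so I would check these two endpoints of the induction directly, and dispose of the small cases $n=2,3$ by hand (there the chain of generic vertices is empty and the principle is applied at $v_n$, then at $v_2$). I note that one could instead induct on $n$ through the vertex-addition description $T_{n+1}=T_n(\cdots)$ and Lemma~\ref{lem:add-vertex}, but tracking how the components through the new crossing $v_{n+1}$ relate to those tracing $e_{2n-1},e_{2n}$ in $T_n$ seems more cumbersome than the direct propagation, so I would prefer the argument above.
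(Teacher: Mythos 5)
Your proposal is correct and follows essentially the same route as the paper: the paper's proof also propagates from $v_n$ down the spine to $v_2$ using exactly your local fact (that in either connection type at a rigid vertex the boundary arcs turn onto neighboring edges, cf.\ Figure~\ref{flip}(A) and (C)), with the induction hypothesis $B(\{e_{2n},\ldots,e_{2i-1}\})=B$ matching your contiguous block. Your explicit statement of the transversal-pair principle and its independence from the $2^n$ connection choices is a slightly more careful phrasing of what the paper justifies by reference to the figure, but the argument is the same.
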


\begin{proof} We use the following notation: for any set $E$ of edges, let $B(E)$ be the set of boundary components 
of the  ribbon graph $F$ that trace at least one edge in $E$.  So $B$ 
in the statement is written as $B(\{e_{2n-1},e_{2n}\})$. Observe that $B$ has at most four elements as there are at most four boundary components tracing two edges.
Now edges $e_{2n-1}$ and $e_{2n}$ in $T_n$  are not neighbors, but they are consecutive edges of the transverse path defining $T_n$. Hence all boundary components tracing $e_{2n-1}$ and $e_{2n}$ (which are at most four) must trace their neighboring edges $e_{2n-2}$ and $e_{2n-3}$ (see Figure  \ref{flip}(A) or (C)). Therefore 
$B(\{e_{2n},e_{2n-1},e_{2n-2},e_{2n-3}\})= B$. 
 But then edges $e_{2n-1}$ and $e_{2n-2}$ are consecutive edges of the transverse path at vertex $v_{n-1}$ and so, all boundary components that trace $e_{2n-1}$ and $e_{2n-2}$ must also trace their neighboring edges at 
 $v_{n-1}$, that is,  edges $e_{2n-4}$ and $e_{2n-5}$. So $B(\{e_{2n},\ldots, e_{2n-5}\})=B$.
 
 Inductively, if $B(\{e_{2n},\ldots,e_{2i},e_{2i-1}\})=B$, then all edges incident to vertices $v_n, v_{n-1},\ldots,v_{i+1}$
 ($i\ge 2$) are traced with boundary components from $B$. The edges $e_{2i+1}$ and $e_{2i}$ are not neighbors
 and are both 
  incident to $v_i$, therefore the boundary components that trace $e_{2i+1}$ and $e_{2i}$, also trace 
 $e_{2i-2}$ and $e_{2i-3}$. Hence $B(\{e_{2n},e_{2n-1},\ldots, e_{2i-2},e_{2i-3}\})=B$. If $i=2$ we obtain that 
 $B(E)=B$ where $E$ is the set of all edges of $T_n$.
 \end{proof}
 
 As a consequence of Lemma  \ref{TC-bound}, and the fact that the parity of the number of boundary components of the  ribbon graph must match the parity of the vertices from Lemma~\ref{Eulerlem}, 
  we have that if $n$ is even then any  ribbon graph of $T_n$ has 2 or 4
 boundary components, and if $n$ is odd, then any  ribbon graph of $T_n$ has 1 or 3 boundary components. In the following we observe that all these situations appear.

  \begin{figure}[h]
\begin{center}
\includegraphics[width=15cm]{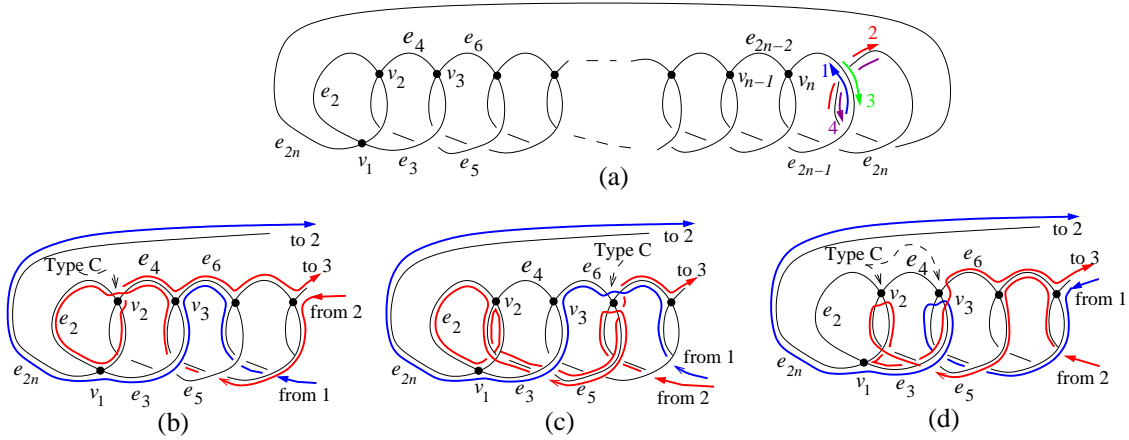}
\caption{Connections of four boundary components in $T_n$.}
\label{TC-4strands}
\end{center}
\end{figure}

 \begin{lemma}\label{TC-three}
  For every odd number $n>2$ there is a  ribbon graph of $T_n$ with three  boundary components.
 \end{lemma}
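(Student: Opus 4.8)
The plan is to exhibit an explicit ribbon graph of $T_n$ (for odd $n>2$) whose boundary curves split into exactly three components, and to verify the count directly from the local connection pictures in Figures~\ref{strand-connection} and~\ref{TC-4strands}. By Lemma~\ref{TC-bound}, every boundary component of any ribbon graph of $T_n$ must trace the terminal edges $e_{2n-1}, e_{2n}$, so the global boundary structure is entirely controlled by how the (at most four) components around $e_{2n-1}, e_{2n}$ propagate through the graph. The task therefore reduces to choosing, at each vertex $v_i$, one of the two boundary connections so that the total number of curves comes out to three rather than one.

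First I would fix the ``standard'' ribbon graph $F$ coming from the planar-type connection (type (A) in Figure~\ref{flip}) at every vertex and compute its number of boundary components using Lemma~\ref{Eulerlem} together with the parity constraint (odd $n$ forces $1$ or $3$ components). If $F$ already has three components we are done; otherwise $F$ has a single boundary component, and the strategy is to perform a single \emph{connection change} at one carefully chosen vertex. By Lemma~\ref{conseclem}'s underlying mechanism — recorded in the bottom row of Figure~\ref{strand-connection} — a single connection change alters the number of boundary components by exactly $0$ or $\pm 2$. Starting from a $1$-component ribbon graph, I would locate a vertex where the local picture is of type (f) (the configuration in which a single curve passes through diagonally in both strands), since changing the connection there splits one component into three, raising the count from $1$ to $3$. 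Figure~\ref{TC-4strands} is precisely the tool that displays which connection patterns at the tail vertices $v_n, v_{n-1}$ realize the four-strand versus fewer-strand situations, and I would read off the required local type from it.

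The cleanest route, however, is probably inductive, mirroring the construction $T_{n+1}=T_n(e_{2n+1},e_{2n-1},e_{2n},e_{2n-2})$ recorded just before the lemma. Suppose by induction that $T_{n-2}$ (also odd) admits a ribbon graph $F_0$ with three boundary components; then $T_n$ is obtained from $T_{n-2}$ by two successive vertex additions. Using Lemma~\ref{lem:add-vertex}, each addition of a vertex changes the boundary count by one of the listed amounts depending on how many of the four boundary slots at the crossed edges are occupied. I would track the set $B$ of components tracing $e_{2n-1},e_{2n}$ through the two additions and choose, at each new vertex $v_{n-1},v_n$, the connection (case (ii) of Lemma~\ref{lem:add-vertex}, giving $b'=b\pm1$) that returns the count to $3$ after the two steps. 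The base case $n=3$ is already handled: the tangled cord $T_3$ is the word $121323$, whose ribbon graph with the connection at vertex $3$ changed has exactly three boundary components, as computed in Example~\ref{121323ex}.

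The main obstacle I anticipate is bookkeeping: verifying that the chosen connections at $v_{n-1}$ and $v_n$ genuinely produce the occupancy pattern (two boundary components through each crossed edge) needed to invoke case (ii) of Lemma~\ref{lem:add-vertex} twice with the right signs, rather than case (i) or (iii) which would overshoot or undershoot. This requires checking that after the first vertex addition the relevant edges are each traced by two distinct components, and Lemma~\ref{TC-bound} is the key structural fact that keeps this under control, since it guarantees all curves funnel through the tail edges and thereby limits the possible configurations to the finitely many depicted in Figure~\ref{TC-4strands}. I would present the explicit connection choice at $v_1,\ldots,v_n$ and confirm the count of three directly from that figure, which makes the argument self-contained and avoids a delicate induction if the direct exhibition is transparent enough.
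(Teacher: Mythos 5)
Your proposal correctly identifies the key reduction: by Lemma~\ref{TC-bound} and the parity constraint from Lemma~\ref{Eulerlem}, any ribbon graph of $T_n$ for odd $n$ has exactly $1$ or $3$ boundary components, so it suffices to exhibit one ribbon graph whose boundary is disconnected. But neither of your two routes to such a ribbon graph is carried out, and each stalls at its crucial step. In the single-switch route you assume that, if the all-type-(A) ribbon graph has one boundary component, there is a vertex whose local configuration is of the type where a connection change splits one curve into three; a connection change can also leave the count unchanged (for instance cases (a), (b), (h) of Figure~\ref{strand-connection}, as used in the proof of Lemma~\ref{joinlem}), and you give no argument that a vertex of the splitting type exists in $T_n$ --- nor do you determine whether the all-type-(A) ribbon graph of $T_n$ has one component or three to begin with. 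In the inductive route, invoking case (ii) of Lemma~\ref{lem:add-vertex} twice requires verifying that the crossed edges are each traced by two distinct components of the chosen $3$-component ribbon graph of $T_{n-2}$ (and again after the first addition), and that the achievable value of $b'-b$ is the sign you want; you defer this as ``bookkeeping,'' but it is exactly the substance of the lemma and is not automatic --- Lemma~\ref{TC-four} shows that some ribbon graphs of $T_n$ have a \emph{single} component tracing both tail edges, so the occupancy pattern genuinely depends on the configuration and must be checked.

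The paper's proof sidesteps all of this. It appeals to the observation made in the proof of Lemma~\ref{notnnlem} (the text cites Lemma~\ref{fullrangelem}, but the relevant observation appears in the proof of Lemma~\ref{notnnlem}) that \emph{every} assembly graph admits a ribbon graph with more than one boundary component: enumerating edges along the transversal, the odd-numbered edges form a disjoint union of cycles, and one can choose connections so that a single boundary curve traces exactly the edges of one such cycle, forcing the even-numbered edges to be traced by other components. Combined with the $1$-or-$3$ dichotomy you already noted, this immediately yields three components. The shortest repair of your argument is to make that same appeal rather than constructing a three-component ribbon graph by hand.
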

 
\begin{proof} 
This lemma is a corollary of (and the proof of) Lemma~\ref{fullrangelem} where it is observed that for every assembly graph there is a ribbon graph with more than one boundary component. 
\end{proof}


\begin{lemma}\label{TC-four}
For every odd number $n>2$,  there is a   ribbon graph $F$ with a single boundary component tracing edges $e_{2n}$
and $e_{2n-1}$  whose global connection is as in Figure \ref{add-vertex-strands}(f).
\end{lemma}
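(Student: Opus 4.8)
The plan is to exhibit, for each odd $n > 2$, an explicit ribbon graph $F$ of $T_n$ having a single boundary component whose local connection at $v_n$ realizes pattern (f) of Figure~\ref{add-vertex-strands}. Since by Lemma~\ref{TC-bound} every edge of $T_n$ is traced by a curve of the set $B$ that traces $e_{2n-1}$ and $e_{2n}$, proving that $F$ has one boundary component is equivalent to showing $|B| = 1$, and the parity argument following Lemma~\ref{TC-bound} guarantees that an odd number of components for odd $n$ is available. The real content is to select boundary connections at every vertex so that all the curves fuse into a single closed curve and so that, at the final vertex $v_n$, the resulting configuration is precisely the case (f) picture in which one boundary component passes through the vertex crossing all four strand-slots.

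I would proceed by induction on the odd index, using the addition-of-a-vertex machinery from Lemma~\ref{lem:add-vertex} together with the recursive structure $T_{n+1} = T_{n-1}(\dots)$ that raises the vertex count by two while staying in the odd (resp.\ even) parity class. Concretely, I would start from a base case, say $T_3$ (the word $121323$), for which Example~\ref{121323ex} and Figure~\ref{ex121323}(A) already exhibit a ribbon graph with a single boundary component; I would check directly that its connection at the last vertex is of type (f). For the inductive step, assume $F_{n-2}$ is a single-boundary-component ribbon graph for $T_{n-2}$ with a type-(f) connection tracing $e_{2(n-2)-1}, e_{2(n-2)}$. Adding back the two vertices $v_{n-1}, v_n$ by crossing the relevant edges, I would apply Lemma~\ref{lem:add-vertex}(i): because the single boundary component traces both edges being crossed, the addition yields $b' = b+1$ or $b' = b+3$, and I would choose the connection(s) that return to a single boundary component (the (f)-option reduces $4 \to 2$ after a connection change, and composing the two vertex-additions appropriately brings the count back to $1$).

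The key bookkeeping step is to verify that after reattaching $v_{n-1}$ and $v_n$ one can arrange the connections so that (a) the global curve count is again $1$, and (b) the connection at the \emph{new} last vertex $v_n$ is of type (f) rather than (b) or (j). Here I would use Figures~\ref{strand-connection} and~\ref{add-vertex-strands} to read off which of the two local choices at each added vertex produces the (f) pattern, tracking exactly how the single curve threads through the four edge-slots $e_{2n+1}, e_{2n-1}, e_{2n}, e_{2n-2}$ in the cyclic order prescribed for $v_n$. The main obstacle I anticipate is precisely maintaining the type-(f) form of the last-vertex connection through the induction: Lemma~\ref{lem:add-vertex}(i) controls the \emph{count} of boundary components but not their local incidence pattern at $v_n$, so I must argue separately that the inductive connection choice leaves the final vertex in configuration (f). I expect this to follow by a direct inspection of how the crossing at $v_{n+1}$ in Figure~\ref{TC} inherits the single-curve structure from the two edges $e_{2n-1}, e_{2n}$ that are themselves traced by one component under the inductive hypothesis, but pinning down the orientations so that the four strands at $v_n$ all lie on the same curve (the defining feature of (f)) is the delicate part and will require the explicit coordinate labelling of edges established before the lemma.
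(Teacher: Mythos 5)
Your overall framing is right --- the content of the lemma is to produce a single-component ribbon graph of $T_n$ whose four boundary arcs at $e_{2n-1},e_{2n}$ are joined in the cyclic pattern (f) --- but the inductive step you propose is exactly the part that is missing, and it is the whole substance of the lemma. Lemma~\ref{lem:add-vertex} only controls the \emph{number} of boundary components after a vertex addition; it says nothing about which of the global patterns (a)--(j) of Figure~\ref{add-vertex-strands} the new last pair of edges realizes. Your inductive hypothesis must therefore carry the pattern-(f) condition, and to propagate it from $T_{n-2}$ to $T_n$ you would have to trace how the single curve redistributes over the four new edges created at $v_{n-1}$ and then over the four edges at $v_n$ --- i.e., you need precisely the explicit global curve-chasing that the inductive set-up was meant to avoid. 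You acknowledge this (``the delicate part''), but you do not supply it, so the proof is not complete. The bookkeeping also shows how much is left open: from $b=1$ for $T_{n-2}$, Lemma~\ref{lem:add-vertex}(i) forces $b'=2$ or $4$ for $T_{n-1}$, and to return to $b''=1$ for $T_n$ you need the edges $e_{2n-3},e_{2n-2}$ of $T_{n-1}$ to be traced by exactly two (resp.\ four) components in the right configuration --- again information the count lemma does not give.

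There is moreover evidence that a two-step induction with the lower part of the graph frozen cannot work uniformly. The paper's proof is a direct construction: take the planar diagram with type-(A) connections at $v_n,\dots,v_5$ and at $v_1$, follow the two candidate curves $(*)$ and $(**)$ down the cord (they meet at every third vertex), and observe that the vertex where they first meet near the bottom depends on $n\bmod 6$; accordingly one switches to a type-(C) connection at $v_2$, at $v_4$, or at both $v_2$ and $v_3$ in the three cases $n=6k+3$, $6k+1$, $6k+5$. So the single-component, pattern-(f) ribbon graph exhibited for $T_n$ differs from the one for $T_{n-2}$ at the \emph{bottom} vertices, not only at the two added top vertices; an induction that fixes the ribbon structure of $T_{n-2}$ and only chooses connections at $v_{n-1},v_n$ would at minimum require a separate argument that some such choice always succeeds, which your proposal does not give. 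To repair it you would either have to strengthen the inductive hypothesis enough to make the top-vertex analysis self-contained (verifying it survives both vertex additions by explicit inspection of Figures~\ref{strand-connection} and~\ref{add-vertex-strands}), or fall back on the paper's direct mod-$6$ case analysis.
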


\begin{proof}
Since $n$ is odd, the number of boundary components are either 1 or 3. 
The global connection of the boundary component in Figure \ref{add-vertex-strands}(f) implies that the four 
possible boundary components tracing edges $e_{2n}$ and $e_{2n-1}$ (indicated in 
Figure  \ref{TC-4strands}(a)
with 1 through 4)  are connected such that 1 is connected to 2, 2 connected to 3.
If $3$ is connected to $1$, there would be two boundary components, hence 3 must be connected to 4, and back to 1. 
Therefore we only need to show that there is a  ribbon graph where 1 is connected to 2  and 2 to 3. 
Suppose a  ribbon graph 
corresponding to a fixed planar diagram  in Figure \ref{TC-4strands}(a)
has boundary curves connected as type (A) of Figure \ref{flip}
at vertices $v_n, v_{n-1}, \ldots, v_5$ and also at vertex $v_1$. Then  following the indicated arrows 
in Figure  \ref{TC-4strands}(a)  the boundary curves visit vertices:
$$
\begin{array}{lllllll}
v_n & v_{n-2} & v_{n-3} & v_{n-5} & v_{n-6} & v_{n-8}  & v_{n-9}  \ldots 
\mbox{by following boundary curve 1}\qquad(*)\\
v_n & v_{n-1} & v_{n-3} & v_{n-4} & v_{n-6} & v_{n-7} & v_{n-9}  \ldots 
\mbox{by following boundary curve 2}\qquad(**)
\end{array}
$$

It follows that the two curves  ``meet" at every other vertex they visit 
(with indices changing in gaps of 3), but if the two curves ``meet'' at $v_i$,  one of the curves  traces edge $e_{2i}$
and the other traces $e_{2i+1}$. There are three possibilities for the vertex of the 
smallest index  (greater than 2) for $(*)$ and $(**)$ to ``meet":
 at vertex $v_3$, vertex $v_4$ and at vertex $v_5$ respectively. 
 These situations are depicted in Figure  \ref{TC-4strands}(b--d, respectively) where the blue 
 component indicates (*) and the red indicates (**). 
 
The curves ``meet" at vertex $v_3$ when  $n=6k+3$ 
 (Figure  \ref{TC-4strands}(b)), in which case we consider the ribbon graph where the boundary curves 
 are connected 
  at vertex $v_2$ in  type (C) in Figure \ref{flip}, and at all other vertices the connections are of type (A). 
The curves ``meet" at vertex $v_4$ when $n=6k+1$
 (Figure  \ref{TC-4strands}(b)), in which case we consider the ribbon graph where the boundary curves 
 are connected 
  at vertex $v_4$ in  type (C), and at all other vertices the connections are of type (A).   
  The curves ``meet" at vertex $v_5$ when $n=6k+5$
 (Figure  \ref{TC-4strands}(b)), in which case we consider the ribbon graph where the boundary curves 
 are connected 
  at vertex $v_2$ and $v_3$ in  type (C), and at all other vertices the connections are of type (A).
  In all three cases we note that: (i)  the (blue) boundary curve  1  that traces $e_3$ to $v_1$ 
and then $e_{2n}$, continues to ``join" with boundary curve 2 and (ii) the (red)
 boundary curve 2 that traces the even edges 
$e_6, e_8, e_{10}, \ldots,  e_{2n-2}$, and then $e_{2n-1}$ ``joins" with boundary curve 3 (Figure  \ref{TC-4strands}(a)). Since in all three cases  situations (i) and (ii)  appear, the
  global connection of the boundary components at edges $e_{2n-1}$ and $e_{2n}$ are the same as in 
   Figure \ref{add-vertex-strands}(f).
\end{proof}

 The following theorem gives the final result about the genus range of $T_n$.
 
\begin{theorem}\label{TCthm} Let $T_n$ be the tangled cord with $n$ vertices.
Then 
\[\ 
\gr(T_n)=   \begin{cases}
\left\{  \,  \frac{n-2}{2}, \,  \frac{n}{2}\, \right\}  & \text{if } n \text{ is even,}\\
\left\{  \,   \frac{n-1}{2}, \,  \frac{n+1}{2} \, \right\} & \text{if } n \text{ is odd.}
  \end{cases}
\]
\end{theorem}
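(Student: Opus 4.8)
The plan is to convert the question about genera into a question about the number of boundary components of ribbon graphs, and then to exhibit ribbon graphs realizing each admissible count. By Lemma~\ref{Eulerlem}, a ribbon graph $F$ of $T_n$ with $b(F)$ boundary components has genus $g(F)=\frac{1}{2}(n-b(F)+2)$, so computing $\gr(T_n)$ is equivalent to determining which values of $b(F)$ occur. Lemma~\ref{TC-bound}, together with the parity constraint of Lemma~\ref{Eulerlem} (that $b(F)$ and $n$ have the same parity), already pins the candidates down to $b(F)\in\{1,3\}$ for odd $n$ and $b(F)\in\{2,4\}$ for even $n$. Translating through the genus formula, these are exactly the two genus values listed in each case of the statement, so $\gr(T_n)$ is contained in the claimed set; it then remains only to show that \emph{both} admissible counts are actually attained.

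For odd $n>2$ this is immediate from the two preceding lemmas: Lemma~\ref{TC-three} produces a ribbon graph with three boundary components, of genus $\frac{n-1}{2}$, and Lemma~\ref{TC-four} produces one with a single boundary component, of genus $\frac{n+1}{2}$. Since no other value can occur, $\gr(T_n)=\{\frac{n-1}{2},\frac{n+1}{2}\}$ for every odd $n>2$.

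For even $n$ I would pass to the odd case through the recursive structure of the family. Writing $m=n-1$ (odd, and $\ge 3$ once $n\ge 4$), recall that $T_n$ is obtained from $T_m$ by addition of a vertex $v_n$ by crossing the edges $e_{2m-1}$ and $e_{2m}$. Take the ribbon graph $F$ of $T_m$ furnished by Lemma~\ref{TC-four}: it has a single boundary component tracing both $e_{2m-1}$ and $e_{2m}$, with global connection of type (f) in Figure~\ref{add-vertex-strands}. Thus both edges being crossed are traced by exactly one boundary component, so Lemma~\ref{lem:add-vertex}(i) applies, and in the type-(f) situation its two connection choices at the new vertex $v_n$ yield the two counts $b'=b+3=4$ (the default connection) and $b'=2$ (the changed connection). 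These give genera $\frac{n-2}{2}$ and $\frac{n}{2}$ respectively, so by the first paragraph $\gr(T_n)=\{\frac{n-2}{2},\frac{n}{2}\}$ for even $n\ge 4$.

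The small cases must be checked directly: $T_1$ is the two-loop graph $11$, whose ribbon graphs realize $b\in\{1,3\}$ and hence $\gr(T_1)=\{0,1\}$, matching the formula. I expect the main obstacle to lie not in the bookkeeping of the general argument but in the even case, where one must verify that it is precisely the \emph{type-(f)} global connection of Lemma~\ref{TC-four} (rather than (b) or (j)) that makes both $b'=2$ and $b'=4$ simultaneously available from a single underlying ribbon graph of $T_m$; this is exactly why Lemma~\ref{TC-four} is phrased in terms of that specific connection. Care is also needed at $n=2$: the odd-to-even reduction is unavailable there because $T_1$ lies outside the hypotheses $n>2$ of Lemma~\ref{TC-four}, and indeed $T_2=1212$ attains only $b=2$ (genus $1$), so the lower value $b=4$ does not occur and the uniform formula should be read with this small-case caveat.
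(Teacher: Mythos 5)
Your argument follows the paper's proof in all essentials: both reduce the computation to boundary-component counts via Lemma~\ref{Eulerlem}, use Lemma~\ref{TC-bound} together with parity to restrict the counts to $\{1,3\}$ (odd $n$) or $\{2,4\}$ (even $n$), and settle the odd case with Lemmas~\ref{TC-three} and~\ref{TC-four}. In the even case you deviate slightly: you extract both $b'=2$ and $b'=4$ from the single one-component ribbon graph of $T_{n-1}$ supplied by Lemma~\ref{TC-four}, using the two possible connections at the added vertex in the type-(f) situation, whereas the paper obtains $b'=2$ by applying Lemma~\ref{lem:add-vertex}(iii) to the three-component ribbon graph from Lemma~\ref{TC-three} and uses the type-(f) one-component graph only for $b'=4$. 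Your variant is sound and marginally more economical, but note that the statement of Lemma~\ref{lem:add-vertex}(i) asserts only $b'=b+1$ \emph{or} $b'=b+3$; to conclude that both occur you must invoke the case analysis inside its proof (in situation (f) of Figure~\ref{add-vertex-strands}, one choice of connection at the new vertex gives $+3$ and the other $+1$), which you correctly identify as the reason Lemma~\ref{TC-four} pins down that specific global connection. One factual slip: $\gr(T_1)\neq\{0,1\}$. The graph of the word $11$ is planar, so by Lemma~\ref{planarlem} every ribbon graph of it has at least two, hence by parity exactly three, boundary components, giving $\gr(T_1)=\{0\}$; thus the theorem's formula fails at $n=1$ as well as at $n=2$, which you correctly flagged. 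Neither small case is addressed in the paper's proof, so your caveats are a genuine (if minor) improvement apart from the $T_1$ value.
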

\begin{proof}
Recall that there are at most 4 boundary components by Lemma \ref{TC-bound}. 
By Lemmas  \ref{TC-three}, \ref{TC-four}  for every odd $n$, there is a  ribbon graph for $T_n$ with one boundary component and there is a  ribbon graph for $T_n$ with three boundary components. We  need to prove,
for $n$ even, that there are ribbon graphs with  2 and 4 boundary components. 
 The graph $T_{n+1}$ is obtained from $T_n$ by vertex addition by crossing $e_{2n-1}$ and $e_{2n}$. 
 Suppose $n\ge 3$ is odd. Then $T_n$ has  ribbon graphs with one and three components. 
By Lemma \ref{lem:add-vertex}(iii) by adding vertex $v_{n+1}$ through crossing $e_{2n-1}$ and 
$e_{2n}$ the three-component  ribbon graph for $T_n$ becomes a 
ribbon graph with two boundary components 
for $T_{n+1}$.
By Lemma \ref{TC-four} there is a   ribbon graph for $T_n$ with a single boundary component, in which the global boundary connection traces edges $e_{2n-1}$ and $e_{2n}$ as  in Figure  \ref{add-vertex-strands}(f). 
Then after  adding vertex $v_{n+1}$ by crossing $e_{2n-1}$ and 
$e_{2n}$, this  ribbon graph becomes a ribbon graph
with four boundary components 
for $T_{n+1}$. By Lemma \ref{Eulerlem}  the result follows.
\end{proof}

\begin{corollary}
The maximum genus range of assembly graphs with $2n-1$ vertices is $[n-1, n]$ for any $n \in \N$.
\end{corollary}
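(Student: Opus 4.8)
The plan is to read this statement as an immediate synthesis of three results already in hand: the containment $\GR_{2n-1}\subseteq{\cal CP}(n)$, the non-realizability of $[n,n]$, and the realization of $[n-1,n]$ by the tangled cord. First I would invoke Corollary~\ref{cplem} to conclude that every member of $\GR_{2n-1}$ is an interval $[a,b]$ with $0\le a\le b\le n$; in particular each genus range is a single element of the \emph{linearly} ordered set ${\cal CP}(n)$, so the word ``maximum'' is well defined. Next I would locate the top of this order directly from the definition of $<$: an interval $[c,d]$ satisfies $[c,d]>[n-1,n]$ exactly when $d>n$ (impossible, since $d\le n$) or $d=n$ and $c>n-1$, which forces $[c,d]=[n,n]$. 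Hence $[n,n]$ and $[n-1,n]$ are the two largest elements of ${\cal CP}(n)$, with $[n-1,n]$ the immediate predecessor of $[n,n]$, and everything with $d<n$ lies strictly below both.

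The two substantive inputs then close the argument. Lemma~\ref{notnnlem} shows that no assembly graph with $2n-1$ vertices has genus range $[n,n]$, so $[n,n]\notin\GR_{2n-1}$ and the maximum of $\GR_{2n-1}$ is at most $[n-1,n]$ in the linear order. For the matching realization I would exhibit the tangled cord $T_{2n-1}$: since $2n-1$ is odd, the odd case of Theorem~\ref{TCthm} gives $\gr(T_{2n-1})=\{\tfrac{(2n-1)-1}{2},\tfrac{(2n-1)+1}{2}\}=[n-1,n]$, so $[n-1,n]\in\GR_{2n-1}$. Combining the two facts, $[n-1,n]$ is attained while the only element strictly above it, namely $[n,n]$, is not, so $[n-1,n]$ is exactly the maximum.

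There is no genuinely hard step here; the real work has been front-loaded into Theorem~\ref{TCthm} and Lemma~\ref{notnnlem}, and the corollary is a bookkeeping statement about the order. The only point demanding care is confirming that nothing other than the excluded interval $[n,n]$ sits above $[n-1,n]$ in ${\cal CP}(n)$, which is exactly the short order computation above. I would also flag the small-index caveat: the identity $\gr(T_{2n-1})=[n-1,n]$ from Theorem~\ref{TCthm} applies only for $2n-1\ge 3$, i.e.\ $n\ge 2$, so the degenerate case $n=1$ (a single $4$-valent vertex, whose genus range is $\{0\}$) should either be excluded from the statement or checked separately against the intended convention for $\N$.
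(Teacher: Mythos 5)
Your argument is correct and is essentially the paper's own proof, which simply cites Lemma~\ref{notnnlem} and Theorem~\ref{TCthm}; you have merely made explicit the order-theoretic bookkeeping (via Corollary~\ref{cplem}) that the paper leaves implicit. Your observation about $n=1$ is a fair caveat --- the supporting Lemmas~\ref{TC-three} and \ref{TC-four} are only stated for odd $n>2$, and the unique one-vertex graph $\Gamma_{11}$ has genus range $\{0\}$, not $[0,1]$ --- but this does not affect the substance of the argument for $n\ge 2$.
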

\begin{proof}
This follows from Lemma~\ref{notnnlem} and Theorem~\ref{TCthm}.
\end{proof}

\begin{conjecture}
{\rm 
The maximum genus range of assembly graphs with $2n$ vertices is $[n-1, n]$ for any $n \in \N$.
}
\end{conjecture}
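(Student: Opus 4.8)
The plan is to split the statement into an \emph{achievability} half and a \emph{maximality} half, exactly paralleling the odd-vertex Corollary that precedes it. For achievability, Theorem~\ref{TCthm} already gives $\gr(T_{2n})=\{n-1,n\}=[n-1,n]$ for $n\ge 2$, so $[n-1,n]\in\GR_{2n}$. For maximality, recall from Corollary~\ref{cplem} that $\GR_{2n}\subseteq{\cal CP}(n)$; in the linear order on ${\cal CP}(n)$ the unique element lying strictly above $[n-1,n]$ is $[n,n]=\{n\}$ (any range with top value exceeding $n$ is excluded, and among the ranges with top value $n$ only $[n,n]$ exceeds $[n-1,n]$). Hence it suffices to show $\{n\}\notin\GR_{2n}$. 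By Lemma~\ref{Eulerlem}, a ribbon graph $F$ of a $2n$-vertex graph has genus $n$ if and only if $b(F)=2$, so $\gr(\Gamma)=\{n\}$ would force \emph{every} one of the $2^{2n}$ ribbon graphs to have exactly two boundary components. Therefore the whole statement reduces to the single claim: every assembly graph with $2n$ vertices ($n\ge 2$) admits some ribbon graph with at least four boundary components (the parity built into Lemma~\ref{Eulerlem} then upgrades any $b>2$ to $b\ge 4$, giving genus $\le n-1$).

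To attack this claim I would strengthen the construction in the proof of Lemma~\ref{notnnlem}. Enumerate the edges $1,\dots,4n$ along the transversal and let $G$ (resp.\ $G'$) be the subgraph spanned by the odd- (resp.\ even-) numbered edges; each is a disjoint union of cycles covering all $2n$ vertices, and at every vertex the two odd edges are neighbors, as are the two even edges. The routing argument of Lemma~\ref{notnnlem} produces, for any chosen cycle $C$ of $G$, a ribbon graph in which a single boundary component traces exactly the odd edges of $C$ and closes up on itself. When $G$ has at least two cycles $C_1,C_2$, their vertex sets are disjoint, so the routings at $C_1$ and at $C_2$ use disjoint vertices and can be imposed simultaneously in one ribbon graph; this yields two distinct boundary components tracing only odd edges, while every even edge is traced by components distinct from these two. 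Thus $b\ge 3$, hence $b\ge4$. By the symmetry between odd and even edges, the same conclusion holds whenever $G'$ has at least two cycles. This settles every graph except the \emph{doubly-Hamiltonian} case in which both $G$ and $G'$ are single cycles through all $2n$ vertices.

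The doubly-Hamiltonian case is the main obstacle, and is precisely where the even-vertex problem is genuinely harder than the odd one: for odd size, $b>1$ already forces $b\ge 3$ and the bound drops below $n$ for free, whereas for even size $b>1$ only gives $b\ge 2$ and one must manufacture two extra boundary components. That the statement should still hold here is consistent with $T_{2n}$ itself, whose odd and even subgraphs are each a single cycle and which nonetheless admits a ribbon graph with $b=4$ (Theorem~\ref{TCthm}). Two routes seem promising. First, a local one: start from the forced ``odd-tracing'' ribbon graph (whose requirement that one component trace all of $G$ fixes the connection at every vertex); if it already has four boundary components there is nothing to prove, and otherwise it has exactly two, one tracing all of $G$ and one tracing all of $G'$, and one seeks a vertex whose global strand connection is of the type in Figure~\ref{strand-connection} for which a single connection change \emph{increases} the number of boundary components by two. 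Second, a structural one: analyze the double-occurrence words whose odd and even edge-subgraphs are both single cycles and show directly that they always admit a $4$-component ribbon graph. I expect the first route to be the more tractable, but establishing that such an ``increasing'' vertex must always exist---that is, ruling out a hypothetical rigid graph all of whose ribbon graphs have $b=2$---is the crux that keeps the statement a conjecture.
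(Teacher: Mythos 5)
First, a point of framing: the statement you are proving appears in the paper as a \emph{conjecture}, not a theorem. The authors give no proof; their only remark is that it would follow from the (also conjectural) claim $[n,n]\notin\GR_{2n}$, together with Corollary~\ref{cplem} and Theorem~\ref{TCthm}. Your first paragraph reconstructs exactly that reduction: achievability from $\gr(T_{2n})=[n-1,n]$, maximality from $\GR_{2n}\subseteq{\cal CP}(n)$ plus the fact that $[n,n]$ is the unique element of ${\cal CP}(n)$ strictly above $[n-1,n]$ in the linear order, and the translation via Lemma~\ref{Eulerlem} of ``$\{n\}\in\GR_{2n}$'' into ``every one of the $2^{2n}$ ribbon graphs has exactly two boundary components.'' That part is correct and coincides with the paper's own observation. (One caveat: for $n=1$ the conjecture is false as literally stated, since $\GR_2=\{\{0\},\{1\}\}$ has maximum $[1,1]$ and the formula of Theorem~\ref{TCthm} fails for $T_2=1212$, whose genus range is $\{1\}$; your restriction to $n\ge 2$ is the right reading.)

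Your second paragraph goes genuinely beyond the paper, and the partial argument there is sound: the routing of Lemma~\ref{notnnlem} constrains only the vertices of the chosen cycle, so when the odd-edge subgraph $G$ (or, symmetrically, the even-edge subgraph $G'$) has two or more cycles, the routings on two vertex-disjoint cycles can be imposed simultaneously, each yields a boundary component that closes up tracing only its own cycle's odd edges, the even edges force a third component, and parity upgrades this to $b\ge 4$, i.e.\ genus $\le n-1$. This correctly disposes of every $2n$-vertex graph except the doubly-Hamiltonian ones. But that residual case is precisely the content of the open conjecture $[n,n]\notin\GR_{2n}$, and neither of your two proposed routes is carried out: the ``local'' route still has to rule out a hypothetical ribbon graph for which every single-vertex connection change preserves $b=2$, and $T_{2n}$ itself shows the doubly-Hamiltonian case is nonempty, so it cannot be dismissed. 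In short, your proposal is an honest and correct partial argument that matches the paper's reduction and extends it by settling the multi-cycle case, but it leaves open exactly the gap that keeps the statement a conjecture in the paper, as you yourself acknowledge.
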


This conjecture follows from another conjecture:  $[n, n] \notin \GR_{2n}$ for any $n \in \N$,
together with 
Corollary \ref{cplem}  and Theorem~\ref{TCthm}.

\section{Towards Characterizing Genus Ranges}\label{Characterize}

In this section we summarize our results and use inductive arguments to determine 
genus rages we can characterize at this point. 
To describe the 
genus  
  ranges we can realize, we define the following integer sequence.
For positive integers $k, \ell \in \Z_{\geq 0}$, 
let $\phi ( k, \ell )  = 7 k + 3 \ell - 1$.
 For any $n \in \N$, define 
$K_n= {\rm max} \{ k \in \Z_{\geq 0} \, |  \,  \phi ( k, \ell )  \leq n \}$, $L_n= {\rm max} \{ \ell \in \Z_{\geq 0} \, |  \,  \phi ( K_n, \ell )  \leq n \}$ and 
$\psi_n= 3 K_n + L_n $.
For $n=100$, for example, $K_n=14$, $L_n=1$ and $\Psi_n=43$. 
For small integers, one computes their values as follows.

\bigskip

\noindent
\begin{tabular}{l||rrrrrrrrrrrrrrrrrrrrrrrrrr}
\hline
n             & 1&  2&  3&  4&  5&  6&  7&  8&  9& 10& 11& 12& 13& 14& 15& 16& 17& 18& 19& 20& 21&  22 \\  \hline  \hline
$K_n $  & 0&  0&  0&  0&  0&  1&  1&  1&  1&    1&  1&   1&    2&    2&   2&   2&    2&   2&    2&   3&   3&  3\\ \hline
$L_n $   & 0& 1&  1&  1&  2&  0&  0&  0&  1&    1&  1&    2&    0&    0&   0&  1&    1&   1&     2&   0&  0&   0  \\ \hline
$\psi_n$& 0& 1&  1&  1&  2&  3&  3&  3&  4&    4&   4&   5&   6&    6&    6&  7&    7&   7&     8&   9&  9&   9 \\ \hline
\end{tabular}

\bigskip

Recall from Corollary~\ref{cplem} that for $n \in \N$,  $ \GR_{2n-1}$  and $\GR_{2n}$
are subsets of  ${\cal CP}(n)$. Hence towards Problem~\ref{MainProblem}, we consider
 which sets in ${\cal CP}(n)$ are realized as elements of  $ \GR_{2n-1}$  and $\GR_{2n}$,
and  which sets are proved to be not realized. 

\begin{theorem}\label{mainthm} {\ }  

\noindent
{\rm (i)} For any $n\in \N$, 
$ \GR_{2n-1}$ contains the set  
 ${\cal CP}(n) \setminus \{ [0, n], [h,h] \, | \,  \psi_{2n-1} < h \leq n\}$.  

\noindent
{\rm (ii)} For  any $n\in \N$, 
$ \GR_{2n} $ contains the set  
${\cal CP}(n)\setminus \{ [h,h]\, | \, \psi_{2n} < h \leq n \}$.  

\noindent
{\rm (iii)} For   any $n\in \N$,  
$ [0, n],  [n,n] \notin \GR_{2n-1}$. 
\end{theorem}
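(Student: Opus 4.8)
The plan is to treat the impossibility statement (iii) and the two containment statements (i), (ii) separately, since (iii) is immediate from earlier lemmas while (i) and (ii) are realization results assembled from the building blocks of Sections~\ref{PropOfGenus}--\ref{GenusRangeTC}. For (iii) I would simply invoke the two impossibility lemmas already proved: $[0,n]\notin\GR_{2n-1}$ is exactly Lemma~\ref{fullrangelem}, and $[n,n]\notin\GR_{2n-1}$ is exactly Lemma~\ref{notnnlem}, so (iii) needs no new argument. Throughout the realization part I would use Lemma~\ref{conseclem} and Corollary~\ref{cplem} only to know that every range produced is indeed a consecutive interval lying in ${\cal CP}(n)$, and Corollary~\ref{higherlem} to top up any construction to the exact target vertex count by adding loops.

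For the singleton members $[h,h]$ with $h\le\psi_{2n-1}$ (resp. $h\le\psi_{2n}$), the idea is to cross-sum efficient ``singleton blocks''. Remark~\ref{highest-single-rem} supplies a $6$-vertex graph with range $\{3\}$, and $\Gamma_0=1212$ is a $2$-vertex graph with range $\{1\}$. By Lemma~\ref{joinlem} the cross sum of $k$ copies of the former and $\ell$ copies of the latter has range $\{3k+\ell\}$, and, counting one extra vertex per cross sum, has exactly $7k+3\ell-1=\phi(k,\ell)$ vertices. By the very definition of $K_n,L_n,\psi_n$ the maximal singleton obtainable with at most $n$ vertices this way is $\{\psi_n\}$; a short monotonicity check then shows every $h\le\psi_n$ is realized by some admissible pair $(k,\ell)$ with $\phi(k,\ell)\le n$. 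Padding with loops via Corollary~\ref{higherlem} (and using the single loop $11$ for $h=0$) places $[h,h]$ in $\GR_{2n-1}$ and $\GR_{2n}$ for all $h\le\psi_{2n-1}$ and $h\le\psi_{2n}$ respectively.

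For the non-singleton members $[m,m']$ with $m<m'$, I would split on $m$. When $m=0$, Propositions~\ref{halfgrprop} and~\ref{oddhalfgrprop} give $[0,m']$ with $2m'$ and $2m'+1$ vertices, so Corollary~\ref{higherlem} puts $[0,m']$ into $\GR_{2n}$ for all $m'\le n$ and into $\GR_{2n-1}$ for all $m'\le n-1$; the only remaining case $[0,n]$ is exactly the one excluded, consistently with (iii). When $m\ge1$, I would start from the tangled cord $T_{2m+1}$, which by Theorem~\ref{TCthm} has range $[m,m+1]$ and $2m+1$ vertices, and repeatedly apply Lemma~\ref{plus1lem}(ii) to raise only the top of the range, $(m'-m-1)$ times, reaching $[m,m']$ with $2m+1+2(m'-m-1)=2m'-1$ vertices; a final round of loop padding hits $2n-1$ or $2n$ exactly, which is possible precisely when $m'\le n$.

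The crux is ensuring that the hypothesis of Lemma~\ref{plus1lem}(ii) survives every iteration. I would maintain the invariant that the current graph has an \emph{odd} number of vertices $v$ with maximal genus equal to $(v+1)/2$, equivalently a ribbon graph with a single boundary component: such a component traces every edge, so condition (ii) holds at every edge, and one application of Lemma~\ref{plus1lem}(ii) raises $v$ by $2$ while raising the top of the range by $1$, preserving $\mathrm{top}=(v+1)/2$ and hence the invariant. Theorem~\ref{TCthm} (via Lemmas~\ref{TC-three} and~\ref{TC-four}) guarantees the base $T_{2m+1}$ already has such a one-boundary-component ribbon graph, seeding the induction. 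I expect the main obstacles to be exactly these two bookkeeping points: the $\phi$/$\psi$ monotonicity, so that \emph{all} $h\le\psi_n$ (not merely $\psi_n$ itself) occur; and the propagation of the one-boundary-component invariant, so that Lemma~\ref{plus1lem}(ii) may be applied repeatedly. Once these are in place, the vertex counts and the exclusion of $[0,n]$ follow from Corollary~\ref{higherlem} and Lemma~\ref{fullrangelem}.
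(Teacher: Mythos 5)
Your proposal is correct and follows essentially the same route as the paper's proof: (iii) is quoted from Lemmas~\ref{fullrangelem} and~\ref{notnnlem}; the singletons $[h,h]$ with $h\le\psi_n$ come from cross sums of copies of the $6$-vertex $\{3\}$-graph and of $1212$ via Lemma~\ref{joinlem}, with the same vertex count $\phi(k,\ell)=7k+3\ell-1$; the intervals $[0,m']$ come from Propositions~\ref{halfgrprop} and~\ref{oddhalfgrprop}; and the intervals $[m,m']$ with $m\ge 1$ are grown by Lemma~\ref{plus1lem}(ii), using exactly the paper's key observation that a graph with an odd number $v$ of vertices attaining genus $(v+1)/2$ has a one-boundary-component ribbon graph, so hypothesis (ii) holds at every edge. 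The only real difference is organizational: the paper runs a double induction threading $\GR_{2k-1}\rightarrow\GR_{2k}\rightarrow\GR_{2k+1}$ from computer-verified base cases, whereas you build $[m,m']$ directly by seeding with the tangled cord $T_{2m+1}$ (Theorem~\ref{TCthm}) and iterating Lemma~\ref{plus1lem}(ii) while maintaining the single-boundary invariant, then padding with loops via Corollary~\ref{higherlem}. Your version has the small advantage of making the case $[k,k+1]$ explicit (it is the seed $T_{2k+1}$ itself), whereas the paper's inductive step as written only produces $[h,k+1]$ for $1\le h\le k-1$ and leaves the top case to Theorem~\ref{TCthm} implicitly. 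The two bookkeeping points you flag do go through: since $L_n\le 2$, the minimal vertex count $3h-2\lfloor h/3\rfloor-1$ for realizing $\{h\}$ is nondecreasing in $h$ and equals $\phi(K_n,L_n)\le n$ at $h=\psi_n$; and Lemma~\ref{plus1lem}(ii) raises $v$ by $2$ and the top genus by $1$, preserving $g_{max}=(v+1)/2$. (One caveat you inherit from the paper itself: for very small intervals such as $[0,1]$ the propositions you invoke require $m'>1$, so the base realizations there still rest on the explicit small examples, e.g.\ $123123$.)
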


\begin{proof}
The  part (iii) is a restatement of Lemmas~\ref{fullrangelem} and \ref{notnnlem}. We show (i) and (ii) by induction and constructions.
Computer calculations show that the statements hold for $n=1,2,3$ for $\GR_{2n-1}$ and $\GR_{2n}$. 

First we focus on sets $[a,b]$ with $a < b$. 
Assume   for induction that $\GR_{2k-1}$ contains 
all sets $[a, b] \in {\cal CP}(k)$, where $a <b \leq k$, excluding $[0,k]$.
By Lemma~\ref{higherlem},  
all these sets are   contained in  $\GR_{2k}$. 
Proposition~\ref{halfgrprop} implies that  $[0, k]$ is contained in $\GR_{2k}$ and so is in $\GR_{2k+1}$. 
Hence all sets $[a, b] \in {\cal CP}(k)$, where $a <b \leq k$ for $n \leq k$, are contained in  $\GR_{2k}$
and    $\GR_{2k+1}$.

Assume next  that  $\GR_{2k}$ contains all sets $[a, b] \in {\cal CP}(k)$, where $a <b \leq k$, 
and also that $\GR_{2k-1}$ contains all sets $[a, b] \in {\cal CP}(k)$, where $a <b \leq k$, excluding $[0,k]$.
By Lemma~\ref{higherlem},  all $[a, b] \in {\cal CP}(k)$, where $a <b \leq k$, are contained in $\GR_{2k+1}$. 
We show that $\GR_{2k+1}$ contains  all $[a, b] \in {\cal CP}(k+1)$, where $a <b \leq k+1$, excluding $[0,k+1]$.
Among these wanted, the following have not been covered by the induction assumption:
$ [h, k+1] $ for $h=1, \ldots, k+1$ and $[0, k]$. 
There is an assembly graph with $2k+1$ vertices and genus range $[0, k]$
by Proposition~\ref{oddhalfgrprop}. 
Let $\Gamma$ be an assembly graph with $2k-1$ vertices that have  genus range $[h, k]$ for a fixed $h$, 
where $1 \leq h \leq k-1$.
The ribbon graph of $\Gamma$ with the highest genus $k$ has connected boundary component.  
Therefore the condition (ii) of Lemma~\ref{plus1lem} is satisfied with any edge of $\Gamma$.
Hence by Lemma~\ref{plus1lem}, there is an assembly graph with $2k+1$ vertices and with the genus range $[h, k+1]$. 
Thus $\GR_{2k+1}$ contains  all $[a, b] \in {\cal CP}(k+1)$, where $a <b \leq k+1$, excluding $[0,k+1]$.

Finally we show that $\GR_{n}$ contains $[h,h]$ for $ 0 \leq h \leq \psi_{n} $.
Let  $\hat \Gamma$  
 be the graph with $6$ vertices and genus range $\{ 3 \}$ corresponding to the word $123245153646$
mentioned in Remark~\ref{highest-single-rem}, Item 5. 
Assume that $\GR_{n}$ contains $[h,h]$ for $ 0 \leq h \leq \psi_{n} $ for $n \leq k$. 
For $n=k+1$, let $m=K_n+L_n$.  
Consider the assembly graph $\Gamma_n$ obtained by cross sum construction 
  as depicted in Figure~\ref{connectall},  
  where
in the boxes $B_1, \ldots, B_{K_n}$ copies of the graph $\hat \Gamma$  
  are inserted, while  
   in $B_{K_n +1}, \ldots, B_{K_n + L_n}$ copies of 
the graph
$\Gamma_0$ corresponding to the word $1212$ are inserted.  
By Lemma~\ref{joinlem}, we have $\gr(\Gamma_n)=[\psi_n, \psi_n]$, and it holds for $n=k+1$.
\end{proof}

\begin{remark}
{\rm 
{}From Theorem~\ref{mainthm}, we conjecture  that for any $m \in \N$, 
there is an integer $\Psi_m\ge \psi_m$ such that 
$$\begin{array}{lll}
\GR_{2n} &=& {\cal CP}(n) \setminus \{ \, [h,h]\, | \, \Psi_{2n} < h \leq n \, \} , \\
\GR_{2n-1} &=& {\cal CP}(n) \setminus \{ \, [0, n], [h,h]\, | \, \Psi_{2n-1} < h \leq n \, \}.  
\end{array} $$

For small values of $m\leq 20$, the conjecture holds with $\Psi_m=\psi_m$  for 
$m=1, \ldots, 7, 9, 13$. At this time we are not able to determine if 
$[5,5]$ is in $\GR_m$ for $m=10, 11$, $[6,6]$ for $m=12$,  $[7,7]$ for $m=14,15$, 
$[8,8]$ for $m=16, 17, 18$, and $[9,9]$ for $m=18, 19$.
The conjecture is true if there is only one unknown entry, and therefore, 
the conjecture holds for all $m\leq 20$  except $m=18$, and $m=18$ is the smallest number for which 
we do not know if the conjecture holds. If one finds $[8,8] \notin \GR_{18}$ but $[9,9]\in  \GR_{18}$,
then this will provide a counterexample. 
For $n=100$, for example, we do not know if $[h,h] \in \GR_{100}$ for 
$h=43,  \ldots, 50$. 
We know that $[4,4]$ is not in $\GR_8$ only by 
computer calculations
searching through all assembly graphs with 8 vertices.  
}
\end{remark}


\section{Concluding Remarks}\label{ConclSec}

For regular 4-valent rigid vertex graphs, we defined the genus range, that are genera of cellular embeddings.
Some ranges are realized by using specific families of graphs, and by some operations applied on graphs of smaller sizes. 
A few ranges are shown to be not realized as genus range. 
We identified certain ranges of singletons $[h,h]$ that remain unknown  whether they can be genus ranges. 

In Figure \ref{fig:genus_ranges_7_8}(right), we note that there are only 13 graphs  among 65346 graphs 
of 8 vertices with genus range $[0, 4]$. In Theorem~\ref{halfgrprop},
we constructed graphs with $2n$ vertices with genus range $[0,n]$, and these graphs seem to be very rare.
Also we notice that among 7 vertex graphs, there are only 2 graphs  with genus range $[3,3]$ (see Remark \ref{highest-single-rem}, Item 5 and 6). 
It is a curious fact that certain types of genus rages are very rare, and some are numerous.

We remark on possible relations and applications to the studies on DNA assembly.
A mathematical model using assembly graphs for DNA recombination processes is proposed and studied in
\cite{Angeleska2009,Angeleska2007,BDJMS}, for example. 
The molecular structure in space at the exact moment of recombination is modeled by assembly graphs, and the assembled gene is modeled by {\it Hamiltonian polygonal paths}, that are paths that make  
``$90^\circ$ turn"  
   at every rigid vertex, and visit every vertex exactly once.
Polygonal paths  
  have been also studied in graph theory as A-trails (for example, \cite{ABJ}).
In \cite{BDJMS}, it was proved that the tangled cord,
which achieves the maximal genus range for at least the graphs with odd number of vertices,  
has the largest number of Hamiltonian polygonal paths among all
assembly graphs of the same number of vertices.
 If one follows a boundary curve of a  ribbon graph 
 at every vertex (see Figure  \ref{flip}), 
  the consecutive edges traced by the boundary curve must be neighbors, so they form a polygonal path. 
Thus it is expected that smaller numbers of boundary components   contribute to larger numbers of Hamiltonian 
polygonal paths. 
However, at this time we don't know the nature of the relationship between the genus range and the number of 
Hamiltonian polygonal paths.

\section*{Acknowledgements}
We wish to thank F. Din-Houn Lau and Kylash Rajendran, Erica Flapan, Mauro Mauricio and Julian Gibbons for insightful discussions.   DB is supported in part by EPSRC Grants EP/H0313671, EP/G0395851 and EP/J1075308, and thanks the LMS for their Scheme 2 Grant.  KV is supported by EP/G0395851. The research was partially supported by the National Science Foundation under Grants No. DMS-0900671 and CCF-1117254.



\begin{thebibliography}{99}
\setlength{\itemsep}{-3pt}
\ifx\undefined\bysame
\newcommand{\bysame}{\leavevmode\hbox to3em{\hrulefill}\,} \fi

\bibitem{ABJ}
L.D. Andersen, A. Bouchet, B. Jackson, 
Orthogonal A-trails of 4-regular graphs embedded in surfaces of low genus,
J. of Combinatorial Theory, Series B, 66 (1996) 232--246. 

\bibitem{Angeleska2009} A. Angeleska, N. Jonoska, M. Saito, DNA recombination through assembly graphs, Discrete Applied Mathematics 157 (2009) 3020--3037.

\bibitem{Angeleska2007} A. Angeleska, N. Jonoska, M. Saito, L.F. Landweber, RNA-guided DNA assembly, Journal of Theoretical Biology 248 (4) (2007) 706--720.

\bibitem{Dror}
D. Bar-Natan, { On the Vassiliev knot invariants}, Topology 34 (1995) no. 2, 423--472. 



\bibitem{BDJMS} J. Burns, E. Dolzhenko, N. Jonoska, T. Muche, M. Saito, Polygonal
Hamiltonian Paths in Rigid 4-Regular Graphs and DNA Assembly,
to appear in  Discrete Applied Mathematics.

\bibitem{CE1}
G. Cairns, D.M. Elton, 
{ The planarity problem for signed Gauss words},
J. of knot theory and its ramification, 
{ 2} (1993) 359--367.

\bibitem{CE2}
G. Cairns,  D.M. Elton, 
{ The planarity problem II},
J. of knot theory and its ramification, 
{ 5} (1996) 137--144.

\bibitem{Carter}
J.S. Carter, { Classifying immersed curves,} Proc. Amer. Math. Soc. 111 (1991) no. 1, 281--287.

\bibitem{JSC}
J. S. Carter, private communication, 2011.





\bibitem{Sergei}
S.  Chmutov, { Generalized duality for graphs on surfaces and the signed Bollob‡s-Riordan polynomial,}
 J. Combin. Theory Ser. B 99 (2009) no. 3, 617--638.











\bibitem{Jonoska2002} N. Jonoska, M. Saito, Boundary Components of Thickened Graphs, Lecture Notes in Computer Science, 2002, Volume 2340/2002, 70--81


\bibitem{Jonoska2009} N. Jonoska, N. C. Seeman, G. Wu, On existence of reporter strands in DNA-based graph structures, Theoretical Computer Science 410 (2009) 1448--1460.

\bibitem{Kauff}
L.H. Kauffman, { Knots and physics, Third edition,}
 Series on Knots and Everything, 1, World Scientific Publishing Co., Inc., River Edge, NJ, 2001.




\bibitem{MoharThomassenBook} B. Mohar, C. Thomassen, Graphs on Surfaces, The Johns Hopkins University Press, Baltimore, 2001. 













   


\end{thebibliography}
\end{document}